\documentclass{article}
\usepackage{amsopn} % provides \DeclareOperation
\usepackage{amsfonts} % provides \mathbb
\usepackage{amsmath} % provides \lVert.  Comes with amsgen, amsbsy, amsopn, amstext
\usepackage{amssymb} %needed for \supsetneqq
\usepackage{amsthm} % provides the proof environment and \newtheorem
\usepackage{amscd}
\usepackage{enumerate} % provides the enumerate environment needed for enumeratei
\usepackage{verbatim}
\usepackage{url}
\usepackage{xspace}
\usepackage{pgf}
\usepackage{tikz}
\usetikzlibrary{arrows,automata}
\usepackage{subfig}
\newtheorem{lemma}{Lemma}
\newtheorem{example}{Example}
\newtheorem{definition}{Definition}
\newtheorem{remark}{Remark}
\newtheorem{theorem}{Theorem}

\renewcommand{\epsilon}{\varepsilon}

\newcommand{\calC}{\ensuremath{\mathcal C}\xspace}

\newcommand{\calL}{\ensuremath{\mathcal{L}}\xspace}

\newcommand{\calN}{\ensuremath{\mathcal{N}}\xspace}

\newcommand{\calS}{\ensuremath{\mathcal{S}}\xspace}

\newcommand{\calX}{\ensuremath{\mathcal{X}}\xspace}
\newcommand{\calY}{\ensuremath{\mathcal{Y}}\xspace}

\newcommand{\N}{\ensuremath{\mathbb{N}}\xspace}

\newcommand{\bff}{\ensuremath{\mathbf{f}}\xspace}

\newcommand{\bfs}{\ensuremath{\mathbf{s}}\xspace}
\newcommand{\bft}{\ensuremath{\mathbf{t}}\xspace}

\newcommand{\bfx}{\ensuremath{\mathbf{x}}\xspace}
\newcommand{\bfy}{\ensuremath{\mathbf{y}}\xspace}
\newcommand{\bfz}{\ensuremath{\mathbf{z}}\xspace}

  %sequence

 %variant of a sequence

\newcommand{\nsset}[1]{\{#1\}} % normal singular set

 %curly braces

 %normal set with \mid as a separator
\newcommand{\bset}[2]{\bigl\{\,#1\mid #2 \,\bigr\}}
 
 %normal set with \colon as a separator

\newcommand{\nparen}[1]{(#1)}
\newcommand{\bparen}[1]{\bigl(#1\bigr)}

\newcommand{\nabs}[1]{\lvert #1\rvert}

\newcommand{\qtext}[1]{\quad\text{#1}\quad}
\newcommand{\qqtext}[1]{\qquad\text{#1}\qquad}

\newcommand{\ntoinf}{n \rightarrow \infty}

\renewcommand{\o}{\ensuremath{\mathtt{0}}\xspace}
\newcommand{\oi}{\ensuremath{\mathtt{01}}\xspace}
\newcommand{\io}{\ensuremath{\mathtt{10}}\xspace}
\renewcommand{\i}{\ensuremath{\mathtt{1}}\xspace}
\newcommand{\ka}{\ensuremath{\mathtt{2}}\xspace}
\newcommand{\ko}{\ensuremath{\mathtt{3}}\xspace}

\newcommand{\bina}{\nsset{\o,\i}}
\newcommand{\terna}{\nsset{\o,\i, \ka}}
\newcommand{\kvarta}{\nsset{\o,\i, \ka, \ko}}

\newcommand{\talf}{\reflectbox{${}^{\flat}$}}
\begin{document}
\title{Suffix conjugates for a class of morphic subshifts}
\author{James D. Currie\thanks{j.currie@uwinnipeg.ca}
\and Narad Rampersad \thanks{narad.rampersad@gmail.com}
\and Kalle Saari \thanks{kasaar2@gmail.com}
\\
Department of Mathematics and Statistics\\University of Winnipeg\\515 Portage Avenue\\Winnipeg, MB R3B 2E9\\Canada
}

\maketitle

\begin{abstract}
Let $A$ be a finite alphabet and $f \colon A^{*} \rightarrow A^{*}$ be a morphism with an iterative fixed point $f^{\omega}(\alpha)$,
where $\alpha \in A$. Consider the subshift $(\calX, T)$, where $\calX$ is the shift orbit closure of $f^{\omega}(\alpha)$ and $T\colon \calX \rightarrow \calX$ is the shift map.
Let $S$ be a finite alphabet that is in bijective correspondence via a mapping $c$ with the set of nonempty suffixes of the images $f(a)$ for $a\in A$.
Let $\calS\subset S^{\N}$ be the set of infinite words $\bfs = (s_{n})_{n\geq0}$ such that 
$\pi\nparen{\bfs} := c(s_{0})f\bparen{c(s_{1})}f^{2}\bparen{c(s_{2})}\cdots \in \calX$. 
We show that if $f$ is primitive and $f(A)$ is a suffix code, then 
there exists a mapping $H\colon \calS \rightarrow \calS$ such that $(\calS, H)$ is a topological dynamical system 
and $\pi \colon (\calS, H) \rightarrow (\calX, T)$ is a conjugacy; we call $(\calS, H)$ the \emph{suffix conjugate} of $(\calX, T)$.
In the special case when $f$ is the Fibonacci or the Thue-Morse morphism, we show that
the subshift $(\calS, T)$ is sofic, that is, the language of $\calS$ is regular.
\end{abstract}

\section{Introduction}

Let $A$ be a finite alphabet and $f \colon A^{*} \rightarrow A^{*}$ a morphism with an iterative fixed point $f^{\omega}(\alpha) = \lim_{\ntoinf}f^{n}(\alpha)$. Consider the shift orbit closure $\calX$ generated by~$f^{\omega}(\alpha)$. 
If $\bfx \in\calX$, then there exist a letter $a \in A$ and an infinite word $\bfy \in \calX$ such that
$\bfx = s f(\bfy)$, where $s$ is a nonempty suffix of $f(a)$~\cite[Lemma~6]{CurRamSaa2013}.
This formula has been observed several times in different contexts, see~\cite{HolZam2001} and the references therein.
Since $\bfy \in \calX$, this process can be iterated to generate an expansion
\begin{equation}\label{2013-04-10 21:15}
\bfx = s_{0}f(s_{1})f^{2}(s_{2}) \cdots f^{n}(s_{n}) \cdots,
\end{equation}
where each $s_{n}$ is a nonempty suffix of an image of some letter in~$A$. In general, however, not every sequence $(s_{n})_{n\geq0}$ of suffixes gives rise to an infinite word in $\calX$ by means of this kind of expansion.
Therefore, in this paper we introduce the set $\calS$ that consists of those $(s_{n})_{n\geq0}$ whose expansion~\eqref{2013-04-10 21:15} is in $\calX$.  Our goal is then to understand the structure of $\calS$. By endowing $\calS$ with the usual metric on infinite words, $\calS$ becomes a metric space. Furthermore, $\calS$ can be associated with 
a mapping $G \colon \calS \rightarrow \calS$ (see below) giving rise to a topological dynamical system 
$(\calS, G)$ that is an extension of $(\calX, f)$; see the discussion around Eq.~\eqref{2013-04-12 19:15}
However, imposing some further restrictions on $f$, we obtain a much stronger result:
If $f$ is a circular morphism such that $\nabs{f^{n}(a)}\rightarrow \infty$ for all $a \in A$ and $f(A)$ is a suffix code,
then there exists a mapping $H\colon \calS \rightarrow \calS$ such that $(\calS, H)$ and $(\calX, T)$, where $T$ is the usual shift operation, are conjugates (Theorem~\ref{2013-04-12 22:44}). We call $(\calS, H)$ the \emph{suffix conjugate} of $(\calX, T)$. Since primitive morphisms are circular (i.e., recognizable) by Moss\'e's theorem~\cite{Mosse1992}, primitivity of $f$ together with the suffix code condition suffice for the existence of the
suffix conjugate. In particular, both the Fibonacci morphism $\varphi\colon \o \mapsto \o\i$, $\i \mapsto \o$
and the Thue-Morse morphism $\mu\colon \o \mapsto\o\i$, $\i \mapsto \i\o$ satisfiy these conditions, 
and so the corresponding Fibonacci subshift $(\calX_{\varphi}, T)$ and the Thue-Morse subshift $(\calX_{\mu}, T)$ have suffix conjugates.
In this paper we characterize the language of both subshifts and show that they are regular. 

An encoding scheme for $\calX$ related to ours was considered by Holton and Zamboni~\cite{HolZam2001}
and Canterini and Siegel~\cite{CanSie2001}, who studied 
bi-infinite primitive morphic subshifts and essentially used prefixes of images of letters where we use suffixes. Despite of this seemingly insignificant difference, though,
we are not aware of any mechanism that would allow transferring results from one encoding scheme to another.
See also the work by Shallit~\cite{Shallit2011}, who constructed a finite automaton that provides an encoding for the set of infinite overlap-free words.

\section{Preliminaries and generalities}

In this paper we will follow the standard notation and terminology of combinatorics on 
words~\cite{Lothaire2002,AllSha2003} and symbolic dynamics~\cite{LinMar1995,Kurka2003}.

Let $A$ be a finite alphabet and $f : A^{*} \rightarrow A^{*}$ a morphism
with an iterative fixed point $f^{\omega}(\alpha) = \lim_{\ntoinf}f^{n}(\alpha)$, where $\alpha \in A$.
Let $\calX$ be the shift orbit closure generated by~$f^{\omega}(\alpha)$.
Let $S'$ be the set of nonempty suffixes of images of letters under~$f$.
Denote $S = \nsset{0, 1, \ldots, \nabs{S'} - 1}$
and let $c\colon S \rightarrow S'$ be a bijection. We consider $S$ as a finite alphabet.

If $s = s_{0}s_{1}\cdots s_{n}$ with $s_{i} \in S$, then we denote by $\pi(s)$ the word 
\[
\pi(s) = c(s_{0})f(c(s_{1})) f^{2}(c(s_{2})) \cdots f^{n}(c(s_{n})) \in A^{*}.
\]
Then $\pi$ extends to a mapping $\pi \colon S^{\N} \rightarrow A^{\N}$ in a natural way,
and so we may define
\[
\calS = \bset{ \bfs \in S^{\N}}{ \pi(\bfs) \in \calX }.
\]
Our goal in this section is to find sufficient conditions on $f$ so that $\calS$ can be endowed with dynamics
that yields a conjugate to $(\calX, T)$ via the mapping~$\pi$. Examples~\ref{2013-04-12 01:04} and~\ref{2013-04-12 09:57} below show that this task is not trivial. Such sufficient conditions are laid out in Definition~\ref{2013-04-10 17:36}.

If $\bfx \in \calX$ and $\bfs \in \calS$ such that $\pi(\bfs) = \bfx$, we say that 
$\bfx$ is an \emph{expansion} of $\bfs$.

\begin{lemma}[Currie, Rampersad, and Saari \cite{CurRamSaa2013}]\label{2013-04-10 20:38}
For every $\bfx \in \calX$, there exist $a\in A$, a non-empty suffix $s$ of $f(a)$, and an infinite word $\bfy \in \calX$
such that $\bfx = s f(\bfy)$ and $a\bfy \in \calX$.
Therefore the mapping $\pi \colon \calS \rightarrow \calX$ is surjective.
\end{lemma}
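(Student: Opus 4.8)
The plan is to quote the first assertion verbatim from \cite[Lemma~6]{CurRamSaa2013} and then to obtain surjectivity of $\pi$ by iterating that structural decomposition along the orbit of a given point of $\calX$.

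Fix $\bfx \in \calX$ and set $\bfx^{(0)} = \bfx$. Applying the first assertion to $\bfx^{(n)}$ yields a letter $a_{n} \in A$, a non-empty suffix $t_{n}$ of $f(a_{n})$, and an infinite word $\bfx^{(n+1)} \in \calX$ such that
\[
\bfx^{(n)} = t_{n}\, f\bparen{\bfx^{(n+1)}} \qquad\text{and}\qquad a_{n}\bfx^{(n+1)} \in \calX .
\]
Since $\bfx^{(n+1)}$ again belongs to $\calX$, this step may be repeated, so $t_{n}$ and $\bfx^{(n+1)}$ are defined for every $n \geq 0$ by recursion. Substituting each decomposition into the preceding one and using that $f$ is a morphism, one gets for every $n \geq 1$
\[
\bfx = t_{0}\, f(t_{1})\, f^{2}(t_{2}) \cdots f^{n-1}(t_{n-1})\, f^{n}\bparen{\bfx^{(n)}} .
\]
Now set $s_{n} = c^{-1}(t_{n}) \in S$ and $\bfs = (s_{n})_{n\geq 0} \in S^{\N}$; it remains to verify that $\pi(\bfs) = \bfx$.

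By the definition of $\pi$, the prefix $\pi(s_{0}s_{1}\cdots s_{m})$ of $\pi(\bfs)$ equals $c(s_{0})f(c(s_{1}))\cdots f^{m}(c(s_{m})) = t_{0}f(t_{1})\cdots f^{m}(t_{m})$, which by the last display (taken with $n = m+1$) is simultaneously a prefix of $\bfx$. Thus for every $m \geq 0$ the words $\pi(\bfs)$ and $\bfx$ agree on a prefix of length $\sum_{i=0}^{m}\babs{f^{i}(t_{i})}$. Since $f$ is non-erasing and each $t_{i}$ is non-empty we have $\babs{f^{i}(t_{i})} \geq \babs{t_{i}} \geq 1$, so this length is at least $m+1$ and hence tends to infinity; two infinite words with arbitrarily long common prefixes are equal, so $\pi(\bfs) = \bfx$. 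In particular $\pi(\bfs) \in \calX$, so $\bfs \in \calS$, and surjectivity of $\pi \colon \calS \rightarrow \calX$ follows.

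I do not expect a genuine obstacle here: the argument is bookkeeping on the iterated decomposition, and the only delicate point is the very last one — that the encoded prefixes actually exhaust $\bfx$, equivalently that $\babs{\pi(s_{0}\cdots s_{m})} \to \infty$ — which is where non-erasingness of $f$ is used (this property is in any case implicit in $f^{\omega}(\alpha)$ being a genuine infinite word and in $\pi$ being declared to map $S^{\N}$ into $A^{\N}$). Note also that the extra conclusion $a_{n}\bfx^{(n+1)} \in \calX$ of the cited lemma plays no role in the surjectivity argument; it will be needed only later, to equip $\calS$ with a dynamics for which $\pi$ is a conjugacy.
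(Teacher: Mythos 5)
Your argument is correct and matches what the paper does: the first assertion is taken verbatim from \cite[Lemma~6]{CurRamSaa2013}, and surjectivity is obtained by iterating that decomposition to produce the expansion $\bfx = t_{0}f(t_{1})f^{2}(t_{2})\cdots$, exactly as sketched in the paper's introduction (the paper itself supplies no further proof). Your extra care about why the encoded prefixes exhaust $\bfx$ (non-erasingness of $f$) is a point the paper leaves implicit, not a deviation.
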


Both $A^{\N}$ and $S^{\N}$ are endowed with the usual metric 
\[
d\bparen{(x_{n})_{n\geq0} , (y_{n})_{n\geq0}} = \frac{1}{2^{n}}, \qtext{where}  n = \inf\bset{n}{x_{n} \neq y_{n}},
\]
The following lemma is obvious.
\begin{lemma}\label{2013-04-10 20:45}
The mapping $\pi \colon \calS \rightarrow \calX$ is continuous.
\end{lemma}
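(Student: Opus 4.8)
The statement is indeed straightforward, and the plan is to show that $\pi$ is in fact non-expansive for the given metric, from which continuity is immediate. The key point to record first is that $\pi$ is ``prefix-determined'': if $\bfs = (s_n)_{n\geq0}$ and $\bfs' = (s'_n)_{n\geq0}$ are sequences in $S^{\N}$ with $s_i = s'_i$ for all $i < m$, then, reading off the definition
\[
\pi(\bfs) = c(s_0)\,f\bparen{c(s_1)}\,f^{2}\bparen{c(s_2)}\cdots,
\]
both $\pi(\bfs)$ and $\pi(\bfs')$ begin with $w := c(s_0)\,f\bparen{c(s_1)}\cdots f^{m-1}\bparen{c(s_{m-1})}$; that is, $\pi(\bfs)$ and $\pi(\bfs')$ share the common prefix $w$, which depends only on $s_0,\dots,s_{m-1}$.

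The second step is to bound $\nabs{w}$ from below by $m$: each $c(s_i)$ is a non-empty word over $A$, so each block $f^{i}(c(s_i))$ is non-empty and $w$ is a concatenation of $m$ non-empty words. (This is the only spot that warrants a remark; it uses that the letters occurring in the images $f(a)$ are not erased by $f$, which is part of the standing hypotheses --- otherwise $\pi$ would not even map into $A^{\N}$.) Putting the two steps together: if $\bfs \neq \bfs'$ and $m$ is least with $s_m \neq s'_m$, then $d(\bfs,\bfs') = 2^{-m}$, while $\pi(\bfs)$ and $\pi(\bfs')$ agree on a prefix of length at least $m$, so $d\bparen{\pi(\bfs),\pi(\bfs')} \leq 2^{-m} = d(\bfs,\bfs')$; and the case $\bfs = \bfs'$ is trivial. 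Hence $\pi \colon S^{\N} \to A^{\N}$ is $1$-Lipschitz, and restricting the domain to $\calS$ and the codomain to $\calX$ (both carrying the metric induced by $d$) gives the continuity of $\pi \colon \calS \to \calX$. I do not expect any genuine obstacle here beyond keeping the indices in the definition of $d$ straight and making the non-erasing remark.
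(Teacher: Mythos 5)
Your argument is correct and is exactly the standard one the paper has in mind when it dismisses this lemma as ``obvious'' without proof: agreement of $\bfs$ and $\bfs'$ on the first $m$ symbols forces $\pi(\bfs)$ and $\pi(\bfs')$ to share the prefix $c(s_0)f\bparen{c(s_1)}\cdots f^{m-1}\bparen{c(s_{m-1})}$, whose length is at least $m$ since each block is nonempty, so $\pi$ is $1$-Lipschitz. (Your non-erasure caveat is the right thing to flag, and even without it continuity at a fixed $\bfs\in\calS$ survives, since $\pi(\bfs)\in\calX$ is infinite and hence the partial products' lengths still tend to infinity.)
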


We denote the usual shift operation $(x_{n})_{n\geq0} \mapsto (x_{n+1})_{n\geq0}$ in both spaces $A^{\N}$ and $S^{\N}$ by $T$.
We have $T(\calX) \subset \calX$ and $f(\calX) \subset \calX$  by the construction of $\calX$,
and both $T$ and $f$ are clearly continuous on $\calX$, so we have the topological dynamical systems $(\calX, T)$ and $(\calX, f)$. Note, however, that in general $T(\calS)$ is not necessarily a subset of $\calS$,
as the following example shows.
\begin{example}\label{2013-04-12 01:04}
Let $f \colon \nsset{\alpha, a, b}^{*} \rightarrow  \nsset{\alpha, a, b}^{*}$ be the morphism
$\alpha \mapsto \alpha ab$, $a \mapsto a$, and $b \mapsto ab$. Then 
\[
f^{\omega}(\alpha) = \alpha f(b) f^{2}(b) f^{3}(b) \cdots \qtext{and} 
T f^{\omega}(\alpha) = babaabaaab\cdots.
\]
Since the latter sequence is not in the shift orbit closure $\calX$ generated by $f^{\omega}(\alpha)$,
this shows that $\calS$ is not closed under $T$ for this particular morphism.
\end{example}

If $f$ is the morphism $\o \mapsto \o\i$, $\i \mapsto\o$, then $f$ is called the \emph{Fibonacci morphism}
and we write $f = \varphi$. The unique fixed point of $\varphi$ is denoted by $\bff$ and it is called the \emph{Fibonacci word}. 
The shift orbit closure it generates is denoted by $\calX_{\varphi}$ and the pair $(\calX_{\varphi}, T)$ is called the \emph{Fibonacci subshift}.

Similarly, if $f$ is $\o \mapsto \o\i$, $\i \mapsto\i\o$, then $f$ is the \emph{Thue-Morse morphism}
and we write $f = \mu$. The fixed point $\mu^{\omega}(\o)$ of $\mu$ is denoted by $\bft$ and it is called the \emph{Thue-Morse word}. The shift orbit closure  generated by $\bft$ is denoted by $\calX_{\mu}$, and the pair 
$(\calX_{\mu}, T)$ is called the \emph{Thue-Morse subshift}.

\begin{example}\label{2013-04-12 09:57}
Let $f$ be the morphism $0 \mapsto 010$, $1 \mapsto10$. 
The two fixed points of $f$ generate the Fibonacci subshift.
The set of suffixes of $f(\o)$ and $f(\i)$ is $S' = \nsset{\o, \i\o, \o\i\o}$,
and we define a bijection $c \colon \nsset{\o, \i, \ka} \rightarrow S'$
by $c(\o) = \o$, $c(\i) = \i\o$, and $c(\ka) = \o\i\o$. 
Then $\pi(\o\i) = \pi(\ka \o) = \o\i\o\o\i\o$, and therefore $\pi(\o \i^{\omega}) = \pi(\ka\o\i^{\omega})$.
This word  equals the Fibonacci word $\bff$ as can be seen by observing that 
\[
\bff = \o \i\o \varphi^{2}(\i\o) \varphi^{4}(\i\o) \varphi^{6}(\i\o)  \cdots
\]
and $010 f^{n}(a) = \varphi^{2n}(a)010$ for all $n\geq 0$ and $a\in \bina$.
This shows that it is possible for two distinct words in $\calS$ to have the same expansions,
and therefore $\pi$ is not always injective.
\end{example}

The following lemma is a straightforward consequence of the definition of $\pi$.

\begin{lemma}\label{2013-04-10 17:21}
Let $\bfs = s_{0}s_{1}s_{2}\cdots$, where $s_{i}\in S$. Then
\[
f\bparen{\pi\circ T(\bfs)} = T^{\nabs{c(s_{0})}} \pi(\bfs). 
\]
and
\begin{equation}\label{2013-04-02 14:43}
\pi(\bfs) = \pi(s_{0}s_{1}\cdots s_{n-1}) f^{n}\bparen{\pi(T^{n}\bfs)}.
\end{equation}
For finite words $x,y \in S^{*}$, the above reads $\pi(xy) = \pi(x)f^{\nabs{x}}\bparen{\pi(y)}$.
\end{lemma}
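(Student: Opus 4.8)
The plan is to derive both identities directly from the definition of $\pi$, using only that each $f^{n}$ is a morphism (so it distributes over concatenation) and that, for an infinite argument, $\pi(\bfs)$ is the limit of the finite words $\pi(s_{0}s_{1}\cdots s_{k})$. First I would record the finite-word form $\pi(xy) = \pi(x)f^{\nabs{x}}\bparen{\pi(y)}$ for $x,y\in S^{*}$: writing $x = x_{0}\cdots x_{k-1}$ and $y = y_{0}\cdots y_{m-1}$, the word $\pi(xy)$ is by definition the concatenation $c(x_{0})f\bparen{c(x_{1})}\cdots f^{k-1}\bparen{c(x_{k-1})}\,f^{k}\bparen{c(y_{0})}\cdots f^{k+m-1}\bparen{c(y_{m-1})}$. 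The product of the first $k$ factors is exactly $\pi(x)$, while applying $f^{k}$ to $\pi(y) = c(y_{0})f\bparen{c(y_{1})}\cdots f^{m-1}\bparen{c(y_{m-1})}$ and using that $f^{k}$ is a morphism produces precisely the remaining $m$ factors; since $k = \nabs{x}$ this is the claim.

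Next I would obtain \eqref{2013-04-02 14:43} by applying this finite identity with $x = s_{0}s_{1}\cdots s_{n-1}$ and $y$ equal to successive prefixes $s_{n}s_{n+1}\cdots s_{n+j}$ of $T^{n}\bfs$, and then letting $j\to\infty$. Since $\pi(\bfs)$ is the limit of $\pi(s_{0}\cdots s_{n+j})$, since $\pi(T^{n}\bfs)$ is the limit of $\pi(s_{n}\cdots s_{n+j})$, and since $f^{n}$ is continuous on $A^{\N}$, the right-hand sides converge to $\pi(s_{0}\cdots s_{n-1})f^{n}\bparen{\pi(T^{n}\bfs)}$, which gives \eqref{2013-04-02 14:43}. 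Finally, the first identity is just the case $n=1$: because $\pi(s_{0}) = c(s_{0})$, equation \eqref{2013-04-02 14:43} reads $\pi(\bfs) = c(s_{0})f\bparen{\pi(T\bfs)}$, and deleting the length-$\nabs{c(s_{0})}$ prefix $c(s_{0})$ yields $T^{\nabs{c(s_{0})}}\pi(\bfs) = f\bparen{\pi\circ T(\bfs)}$. The finite-word assertion at the end of the statement is then exactly the identity established in the first step.

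I do not expect a genuine obstacle here; the whole argument is bookkeeping with the definition of $\pi$. The single point that deserves an explicit sentence rather than silent use is the interchange of $f^{n}$ with an infinite concatenation — that $f^{n}$ of the infinite word $\pi(T^{n}\bfs)$ equals the infinite product $f^{n}\bparen{c(s_{n})}f^{n+1}\bparen{c(s_{n+1})}\cdots$ — which holds because $f$, hence $f^{n}$, acts letter by letter and is continuous, so it commutes with the prefix limits defining $\pi$ on infinite words.
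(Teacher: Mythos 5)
Your argument is correct and is exactly the ``straightforward consequence of the definition of $\pi$'' that the paper invokes: the paper gives no proof of this lemma, and your unfolding of the definition, the morphism property of $f^{n}$, and the passage to the limit for infinite words is the intended verification. No gaps.
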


Note that if $s \in S$ such that $c(s) \in S'$ is a letter, then $f\bparen{c(s)} \in S'$.
As this connection will be frequently referred to, we define a morphism 
\begin{equation}\label{2013-04-13 20:46}
\lambda \colon S_{1}^{*}\rightarrow S^{*} \qqtext{with}
\lambda(s) = c^{-1}\bparen{f\bparen{c(s)}},
\end{equation}
where $S_{1} \subset S$ consists of those $s\in S$ for which $\nabs{c(s)} = 1$.
Then in particular, $c \bparen{\lambda(s)} = f\bparen{c(s)}$.

%In other words, we write $c^{-1}\bparen{f(s)}$ by $\lambda(s)$.

\begin{lemma} \label{2013-03-20T14:46}
Let $\bfs = s_{0}s_{1}\cdots \in \calS$ with $s_{i} \in S$, and write
$\bfx = \pi(\bfs) \in \calX$.
Let $r\geq0$ be the smallest integer, if it exists, such that $\nabs{c(s_{r})} \geq 2$ and write
$c(s_{r}) = au$, where $a\in A$ and $u\in A^{+}$.
Then $f(\bfx) = \pi(\bft)$, where $\bft  = t_{0}t_{1}\cdots \in \calS$  satisfies
\begin{itemize}
\item $t_{i} = \lambda(s_{i})$ for $i = 0, 1, \ldots, r - 1$,
\item $t_{r} = c^{-1}\bparen{f(a)}$,
\item $t_{r+1} = c^{-1}(u)$, and
\item $t_{i} = s_{i-1}$ for $i\geq r + 2$.
\end{itemize}
If each of $c(s_{i})$ is a letter, then $f(\bfx) = \pi(\bft)$, where 
\[
\bft = \lambda(s_{0}) \lambda(s_{1}) \cdots  \lambda(s_{n})  \cdots.
\]
\end{lemma}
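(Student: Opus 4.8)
The lemma really asserts two things: the identity $\pi(\bft) = f(\bfx)$ in $A^{\N}$, and the membership $\bft \in \calS$. The second is cheap once the first is known, because $\bfx = \pi(\bfs) \in \calX$ together with $f(\calX) \subseteq \calX$ forces $\pi(\bft) = f(\bfx) \in \calX$, which is exactly what it means for $\bft$ to lie in $\calS$. So my plan is: first check that $\bft$ is a well-defined element of $S^{\N}$; then prove the identity by pushing $f$ through the defining expansion of $\bfx$ and regrouping the factors; and finally read off $\bft \in \calS$.

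For well-definedness I would argue as follows. For $i < r$, minimality of $r$ gives $\nabs{c(s_i)} = 1$, i.e.\ $s_i \in S_1$, so $t_i = \lambda(s_i)$ makes sense by the definition \eqref{2013-04-13 20:46} of $\lambda$. Since $c(s_r) = au$ is by construction a suffix of $f(b)$ for some letter $b$, the word $u \in A^{+}$ is also a nonempty suffix of $f(b)$, so $u \in S'$ and $t_{r+1} = c^{-1}(u)$ is defined; and $f(a)$, being a (nonempty) image of a letter, is a suffix of itself, so $f(a) \in S'$ and $t_r = c^{-1}\bparen{f(a)}$ is defined. Hence $\bft \in S^{\N}$.

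For the identity I would compute factor by factor. The $i$-th factor of $f(\bfx) = f\bparen{\pi(\bfs)}$ is $f^{i+1}\bparen{c(s_i)}$. For $i < r$ the letter $c(s_i)$ satisfies $f\bparen{c(s_i)} = c\bparen{\lambda(s_i)}$, so $f^{i+1}\bparen{c(s_i)} = f^{i}\bparen{c(t_i)}$ with $t_i = \lambda(s_i)$; these are precisely the first $r$ factors of $\pi(\bft)$. For $i = r$, I would split $f^{r+1}\bparen{c(s_r)} = f^{r+1}(a)\,f^{r+1}(u) = f^{r}\bparen{f(a)}\,f^{r+1}(u) = f^{r}\bparen{c(t_r)}\,f^{r+1}\bparen{c(t_{r+1})}$, which is exactly the pair of factors of $\pi(\bft)$ in positions $r$ and $r+1$. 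For $i \geq r+1$, the factor $f^{i+1}\bparen{c(s_i)}$ of $f(\bfx)$ equals the factor $f^{i+1}\bparen{c(t_{i+1})}$ of $\pi(\bft)$, since $t_{i+1} = s_i$. Concatenating everything yields $\pi(\bft) = f(\bfx)$, and then $\bft \in \calS$ as noted. The final sentence of the lemma is the same computation with the splitting step at $i = r$ simply absent (the case where no such $r$ exists): every $f^{i+1}\bparen{c(s_i)}$ rewrites as $f^{i}\bparen{c(\lambda(s_i))}$, so $f(\bfx) = \pi\bparen{\lambda(s_0)\lambda(s_1)\cdots}$.

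I expect no real obstacle; the argument is essentially bookkeeping. The one point to watch is the unit index shift produced by applying $f$ — it turns $f^{i+1}$ acting on the $i$-th symbol of $\bfs$ into $f^{i}$ acting on the $i$-th symbol of $\bft$ — combined with the fact that the single long symbol $s_r$ is replaced by the two symbols $c^{-1}\bparen{f(a)}$ and $c^{-1}(u)$, after which the tail of $\bfs$ is merely reindexed by one. Getting the degenerate cases ($r = 0$, or $u$ itself a single letter, or no such $r$) to fall out of the same formulas is routine once the generic case is written carefully.
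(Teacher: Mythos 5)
Your proposal is correct and follows essentially the same route as the paper: apply $f$ to the expansion $\bfx = c(s_0)f(c(s_1))f^2(c(s_2))\cdots$, absorb one power of $f$ into each factor via $f(c(s_i)) = c(\lambda(s_i))$ for $i<r$, split $f^{r+1}(au)$ as $f^r(f(a))f^{r+1}(u)$, and reindex the tail. The extra care you take with well-definedness of $\bft$ and with $\bft\in\calS$ (via $f(\calX)\subseteq\calX$) is fine and only makes explicit what the paper leaves implicit.
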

\begin{proof}
Suppose $r$ exists.
The identity $\bfx = \pi(\bfs)$ says that
\[
\bfx = c(s_{0})f\bparen{c(s_{1})} \cdots f^{r-1}\bparen{c(s_{r-1})} f^{r}\bparen{c(s_{r})} f^{r+1}\bparen{c(s_{r+1})}\cdots
\]
Therefore, by denoting $f\bparen{c(s_{i})} = \hat{s}_{i} \in S'$ for $i=0,1,\ldots, r - 1$, we see that
\begin{align*}
f(\bfx) &= f\bparen{c(s_{0})}f^{2}\bparen{c(s_{1})} \cdots f^{r}\bparen{c(s_{r-1})} f^{r+1}\bparen{c(s_{r})} 
f^{r+2}\bparen{c(s_{r+1})} \cdots \\
		&= \hat{s}_{0}f(\hat{s}_{1}) \cdots f^{r-1}(\hat{s}_{r-1}) f^{r+1}(au) f^{r+2}\bparen{c(s_{r+1})} \cdots   \\
		&= \hat{s}_{0}f(\hat{s}_{1}) \cdots f^{r-1}(\hat{s}_{r-1}) f^{r}\bparen{f(a)} f^{r+1}(u) f^{r+2}\bparen{c(s_{r+1})} \cdots\\
		&= c(t_{0}) f\bparen{c(t_{1})} f^{2}\bparen{c(t_{2})}\cdots,
\end{align*}
where the $t_{i}$'s are  as in the statement of the lemma.
%\begin{itemize}
%\item $t_{i} = c^{-1}(\hat{s}_{i}) = \lambda(s_{i})$ for $i = 0, 1, \ldots, r - 1$,
%\item $t_{r} = c^{-1}\bparen{f(a)} = \lambda\bparen{c^{-1}(a)}$,
%\item $t_{r+1} = c^{-1}(u)$, and
%\item $t_{i} = s_{i-1}$ for $i\geq r + 2$,
%\end{itemize}
%as claimed. 
The case when $r$ does not exist is a special case of the above.
\end{proof}

Let $\bfs \in \calS$ and $\bft\in \calS$ be defined as in the previous lemma.
This defines a mapping $G \colon\calS \rightarrow \calS$ for which $G(\bfs) = \bft$,
which is obviously continuous.
Thus we have a topological dynamical system $(\calS, G)$.
Furthermore, by the definition of $G$, we have
\begin{equation}\label{2013-04-12 19:15}
f \circ \pi = \pi \circ G.
\end{equation}
Therefore $ \pi \colon (\calS, G) \rightarrow (\calX, f)$ is a factor map because $\pi$ is surjective by Lemma~\ref{2013-04-10 20:38} and continuous by Lemma~\ref{2013-04-10 20:45}. 
We can get a more concise definition for $G$ if we extend the domain of $\lambda$ defined 
in~\eqref{2013-04-13 20:46} to $S$ as follows. If $s\in S \setminus S_{1}$, then $f\bparen{c(s)} = au$
with $a\in A$ and $u\in A^{+}$, and we define
\begin{equation}\label{2013-04-14 16:13}
\lambda(s) = c^{-1}\bparen{f(a)}c^{-1}(u).
\end{equation}
Then we have, for all $\bfs \in \calS$,
\begin{equation}\label{2013-04-13 21:25}
G(\bfs) =
\begin{cases}
\lambda(ps)\bft & \text{if $\bfs = ps\bft$ with $p\in S_{1}^{*}$ and $s \in S\setminus S_{1}$} \\
\lambda(\bfs)  & \text{if $\bfs \in S_{1}^{\N}$.}
\end{cases}
\end{equation}

We got this far without imposing any restrictions on $f$, but now we have to introduce some further concepts.

If $\calY$ is the shift orbit closure of some infinite word $\bfx$, then the set of finite factors of $\bfx$ is called the 
\emph{language} of $\calY$ or $\bfx$ and denoted by $\calL(\calY)$ or by $\calL(\bfx)$.

If $x$ is a finite word and $y$ a finite or infinite word and $x$ is a factor of $y$, we will express this by writing $x \subset y$. This handy notation has been used before at least in~\cite{HolZam1999}.

A key property we would like our morphism $f$ to have is called \emph{circularity}, which has various formulations and is also called \emph{recognizability}. We use the formulation of Cassaigne~\cite{Cassaigne1994} and Klouda~\cite{Klouda2012};
see also~\cite{MigSee1994,Kurka2003}. The morphism~$f$ 
whose fixed point generates the shift orbit closure~$\calX$
is called \emph{circular on $\calL(\calX)$}  if $f$ is injective on $\calL(\calX)$ and
there exists a \emph{synchronization delay} $\ell\geq 1$ such that if $w \in \calL(\calX)$ and $\nabs{w} \geq \ell$, then
it has a \emph{synchronizing point} $(w_{1}, w_{2})$ satisfying the following two conditions: First, $w = w_{1} w_{2}$. Second, 
\[
\forall v_{1}, v_{2} \in A^{*} 
\left[v_{1} w v_{2} \in f\bparen{\calL(\calX)} \Longrightarrow 
v_{1}w_{1}\in f\bparen{\calL(\calX)} \qtext{and} w_{2}v_{2} \in f\bparen{\calL(\calX)}\right].
\]

A well-known result due to Moss\'e~\cite{Mosse1992} (see also~\cite{Kurka2003}) says that a primitive morphism
 with an aperiodic fixed point is circular (or \emph{recognizable}).

\begin{definition}\label{2013-04-10 17:36}
We write $f \in \calN$  to indicate that $f \colon A^{*} \rightarrow A^{*}$ with an iterative
fixed point $f^{\omega}(\alpha)$ 
has the following properties.
\begin{enumerate}[(i)]
\item $f$ is circular on the language of $f^{\omega}(\alpha)$;
\item the set $f(A)$ is a suffix code; i.e., no image of a letter is a suffix of another;
\item each letter $a \in A$ is growing; i.e., $\nabs{f^{n}(a)} \rightarrow \infty$ as $n\rightarrow\infty$.
\end{enumerate}
\end{definition}

In particular, if $f$ is primitive and $f^{\omega}(\alpha)$ aperiodic, then $f$ is circular by Moss\'e's theorem,
and if in addition $f(A)$ is a suffix code, then $f\in \calN$.
Therefore both the Fibonacci morphism $\varphi$ and the Thue-Morse morphism $\mu$ are in~$\calN$.

In Example~\ref{2013-04-12 01:04} we saw that, in general, $\calS$ is not necessarily closed under the shift map $T$ for a general morphism~$f$. 
The next lemma shows, however, that if $f \in \calN$, this problem does not arise.

\begin{lemma}\label{2013-04-11 17:37}
If $f \in \calN$, then $T(\calS) \subseteq \calS$. Thus $(\calS, T)$ is a subshift.
\end{lemma}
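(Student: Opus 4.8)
The plan is to reduce the statement to a desubstitution property of $f$ and then exploit circularity. Let $\bfs = s_{0}s_{1}\cdots \in \calS$; we must show $T\bfs = s_{1}s_{2}\cdots \in \calS$, that is, $\bfz := \pi(T\bfs) \in \calX$. First I would invoke Lemma~\ref{2013-04-10 17:21}, which gives $f(\bfz) = f\bparen{\pi(T\bfs)} = T^{\nabs{c(s_{0})}}\pi(\bfs)$. Since $\bfs \in \calS$ we have $\pi(\bfs) \in \calX$, and $\calX$ is closed under $T$ by construction, so $f(\bfz) \in \calX$. Hence the lemma reduces to the implication
\begin{equation*}
\bfz \in A^{\N} \qtext{and} f(\bfz) \in \calX \qLong \bfz \in \calX ,
\end{equation*}
and it is precisely here that the hypothesis $f \in \calN$ enters; Example~\ref{2013-04-12 01:04} shows that the implication genuinely fails in its absence.

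To prove the implication I would use that an infinite word lies in $\calX$ exactly when every one of its finite factors lies in $\calL(\calX)$, and that $\calL(\calX)$ is closed under taking factors, so that it suffices to show $z_{0}z_{1}\cdots z_{n} \in \calL(\calX)$ for all sufficiently large $n$. Fix such an $n$; since every letter is growing, I may assume $w := f(z_{0}\cdots z_{n})$ has length at least the synchronization delay $\ell$. As $w$ is a prefix of $f(\bfz) \in \calX$ we get $w \in \calL(\calX) = \calL\bparen{f^{\omega}(\alpha)}$, and $w$ carries the factorization $w = f(z_{0})f(z_{1})\cdots f(z_{n})$ into single-letter images. On the other hand, some occurrence of $w$ in $f^{\omega}(\alpha) = f\bparen{f^{\omega}(\alpha)}$ is covered by consecutive image-blocks of the decomposition $f^{\omega}(\alpha) = f(g_{0})f(g_{1})f(g_{2})\cdots$ (with $g_{0}g_{1}g_{2}\cdots = f^{\omega}(\alpha)$), so that $w = u_{0}\, f(g_{1})\cdots f(g_{k-1})\, u_{1}$, where $g_{0}g_{1}\cdots g_{k} \in \calL(\calX)$, $u_{0}$ is a non-empty suffix of $f(g_{0})$, and $u_{1}$ a prefix of $f(g_{k})$. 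Now I would feed the occurrence $w \subset f(g_{0}\cdots g_{k}) \in f\bparen{\calL(\calX)}$ into a synchronizing point of $w$: circularity (in particular injectivity of $f$ on $\calL(\calX)$) forces the two factorizations of $w$ to agree in the interior. The suffix-code condition is what pins down the left end: it forbids the dangling suffix $u_{0}$ of $f(g_{0})$ from being the full image $f(z_{0})$ unless already $u_{0} = f(g_{0})$ --- exactly the ambiguity exhibited by Example~\ref{2013-04-12 01:04}, where $f(b)$ is a suffix of $f(\alpha)$ --- and a companion argument via recognizability handles the right end, giving $u_{1} = f(g_{k})$. Then $w = f(g_{0}\cdots g_{k})$ with $g_{0}\cdots g_{k} \in \calL(\calX)$, and since a suffix code is a code, $f$ is injective, so $z_{0}\cdots z_{n} = g_{0}\cdots g_{k} \in \calL(\calX)$, as required. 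The closing assertion that $(\calS, T)$ is a subshift is then immediate: $\calS = \pi^{-1}(\calX)$ is closed by Lemma~\ref{2013-04-10 20:45}, it has just been shown shift-invariant, and it is a subset of $S^{\N}$.

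The step I expect to be the main obstacle is the desubstitution: reconciling the $\bfz$-factorization $w = f(z_{0})\cdots f(z_{n})$ with the $f^{\omega}(\alpha)$-factorization $w = u_{0}f(g_{1})\cdots f(g_{k-1})u_{1}$, and especially ruling out dangling image-fragments at the two ends. Circularity governs the interior but not the endpoints, and the left endpoint is exactly what the suffix-code hypothesis controls (which is why a suffix code, rather than a prefix code, is the right condition, Lemma~\ref{2013-04-10 20:38} peeling suffixes off the left). Making this last point fully rigorous --- including the boundary cases of an empty $u_{1}$ or of an $n$ too small for the length bound, and a clean statement that a suffix code is injective as a morphism on $A^{*}$ --- is where the real work lies.
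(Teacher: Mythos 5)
Your reduction to the implication ``$f(\bfz)\in\calX \Rightarrow \bfz\in\calX$'' matches the paper's first step, and the intended tools (circularity for the interior, the suffix code for the left end) are the right ones, but the central desubstitution step has a genuine gap. The synchronizing-point property, as circularity is defined in this paper, only constrains occurrences $v_{1}wv_{2}\in f\bparen{\calL(\calX)}$, that is, occurrences of $w$ inside images of words \emph{already known to lie in} $\calL(\calX)$. Your second factorization $w=f(z_{0})f(z_{1})\cdots f(z_{n})$ carries no such certificate: $z_{0}\cdots z_{n}\in\calL(\calX)$ is precisely what you are trying to prove, so you cannot feed this factorization into the synchronizing point and conclude that it agrees with the one inherited from $f^{\omega}(\alpha)$. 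The paper's proof is organized exactly around supplying the missing certificate: arguing by contradiction, it takes a forbidden factor $z$ of $\pi(T\bfs)$ and finds, immediately to its right, a factor $y$ with $\nabs{y}\geq\ell$ that is already known to be in $\calL(\calX)$ --- namely a factor of one of the blocks $f^{n-1}\bparen{c(s_{n})}$, which occur in $f^{\omega}(\alpha)$ by construction and are long by the growth hypothesis. Because $y\in\calL(\calX)$, the synchronizing point of $f(y)$ splits it as $f(y_{1})f(y_{2})$, and the suffix-code property then propagates the alignment \emph{leftward} across $z$, forcing $z\in\calL(\calX)$, a contradiction. Your proposal has no such anchor, and without one the synchronizing point of $w$ need not land on a boundary of the $f(z_{i})$-blocks at all.

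A second, smaller problem: the claimed right-end alignment $u_{1}=f(g_{k})$ is false in general. Only a suffix code is assumed, so images of letters may be prefixes of one another (for Fibonacci, $\varphi(\i)=\o$ is a prefix of $\varphi(\o)=\o\i$), and no appeal to recognizability will pin down the last block. This defect is repairable --- it suffices to conclude $z_{0}\cdots z_{n-1}\in\calL(\calX)$ for every $n$ and discard the last letter --- but it is symptomatic of the same issue: the suffix code controls desubstitution only from a synchronized position towards the left, which is why the paper's argument hangs everything on a single anchor to the right of the putative forbidden factor rather than on a global matching of the two factorizations.
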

\begin{proof}
Let $\bfs = s_{0}s_{1}\cdots\in \calS$; then $\pi(\bfs) \in \calX$.
Equation~\eqref{2013-04-02 14:43} says that $\pi(\bfs) = c(s_{0})f\bparen{\pi(T\bfs)}$,
and so $f\bparen{\pi(T\bfs)} \in \calX$.
Suppose that $\pi(T\bfs) \notin \calX$.

Since $f\in \calN$, it is circular. Let $\ell \geq 1$ be a synchronization delay for~$f$.
Note that $f^{n-1}(s_{n})$ occurs both in $\pi(T\bfs)$  and in $f^{\omega}(\alpha)$ for every $n\geq 1$.
Since also $\nabs{f^{n-1}(s_n)} \rightarrow \infty$ as $n\rightarrow\infty$ because $f\in \calN$,  it follows that there are arbitrarily long words in $\calL(\calX)$
that occur in infinitely many positions in $\pi(T\bfs)$. Therefore there exists a word  $zy \subset \pi(T\bfs)$ such that $z$ is not in $\calL(\calX)$,
$y \in \calL(\calX)$, and $\nabs{y} \geq \ell$.

Next, consider the word $f(zy) \subset f\bparen{\pi(T\bfs)}$. Since 
$f(y) \in \calL(\calX)$ and $\nabs{f(y)} \geq \ell$, the word $f(y)$ has a synchronizing point $(w_{1}, w_{2})$.
In particular, since $y \in \calL(\calX)$, there exists $y_{1}, y_{2}$ for which
$y=y_{1}y_{2}$, $f(y_{1}) = w_{1}$, and $f(y_{2}) = w_{2}$.
On the other hand, $f(zy) \in \calL(\calX)$ implies that we can write $f^{\omega}(\alpha) = put\bfx$ such that 
$f(zy) \subset f(ut)$ and $f(y) \subset f(t)$. Thus there exists $t_{1}, t_{2}$ such that $t=t_{1}t_{2}$,
the word $w_{1}$ is a suffix of $f(t_{1})$, and $w_{2}$ is a prefix of $f(t_{2})$. Thus $f(y_{1})$ is a suffix of $f(t_{1})$.
Since $f(A)$ is a suffix code and $f$ is injective, it follows that $y_{1}$ is a suffix of $t_{1}$, and furthermore that $zy_{1}$ is a suffix of $ut_{1}$.
But then $z \in \calL(\calX)$ contradicting the choice of $z$.
Therefore $\pi(T\bfs) \in \calX$ and so $T\bfs \in \calS$.
\end{proof}

\begin{lemma}\label{2013-04-10 20:39}
If  $f \in \calN$, then the mapping $\pi \colon \calS \rightarrow \calX$ is injective.
\end{lemma}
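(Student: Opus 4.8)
The plan is to show, by induction on $n$, that if $\bfs = s_0s_1\cdots$ and $\bft = t_0t_1\cdots$ lie in $\calS$ with $\pi(\bfs)=\pi(\bft)$, then $s_i = t_i$ for all $i<n$ and $\pi(T^n\bfs) = \pi(T^n\bft)$; letting $n\to\infty$ then gives $\bfs=\bft$. The case $n=0$ is the hypothesis. For the inductive step, $\bfx_n := \pi(T^n\bfs) = \pi(T^n\bft)$ lies in $\calX$ by Lemma~\ref{2013-04-11 17:37}, and the $n=1$ instance of~\eqref{2013-04-02 14:43} rewrites this as $\bfx_n = c(s_n)f\bparen{\pi(T^{n+1}\bfs)} = c(t_n)f\bparen{\pi(T^{n+1}\bft)}$, with $\pi(T^{n+1}\bfs),\pi(T^{n+1}\bft)\in\calX$. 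So the lemma reduces to the following uniqueness statement for $f\in\calN$: if $\bfx\in\calX$ admits two decompositions $\bfx = s\,f(\bfy) = s'\,f(\bfy')$ with $\bfy,\bfy'\in\calX$ and with $s,s'$ nonempty suffixes of images of single letters, then $s=s'$ and $\bfy=\bfy'$.

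The engine for this is the nonexistence statement $(\star)$: there is no triple $\bfz,\bfz'\in\calX$ together with a nonempty word $v$ that is a \emph{proper} suffix of some $f(c)$ and satisfies $f(\bfz) = v\,f(\bfz')$. Granting $(\star)$, the uniqueness statement follows: since $s$ and $s'$ are both prefixes of $\bfx$, after interchanging the two decompositions we may assume $s$ is a prefix of $s'$, write $s'=sv$, and get $f(\bfy)=v\,f(\bfy')$; here $v$ is a suffix of $s'$, hence of the image $f(c)$ of which $s'$ is a suffix, and $\nabs{v} = \nabs{s'}-\nabs{s} < \nabs{s'} \le \nabs{f(c)}$, so $v$ is a proper suffix of $f(c)$. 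If $v\neq\varepsilon$ this contradicts $(\star)$; hence $v=\varepsilon$, i.e. $s=s'$ and $f(\bfy)=f(\bfy')$. For $\bfy=\bfy'$: if not, let $n_0$ be the first index where they differ; cancelling $f\bparen{y_0\cdots y_{n_0-1}}=f\bparen{y'_0\cdots y'_{n_0-1}}$ from the left of $f(\bfy)=f(\bfy')$ gives $f(T^{n_0}\bfy)=f(T^{n_0}\bfy')$ with distinct leading letters, so one of $f(y_{n_0}),f(y'_{n_0})$ is a proper prefix of the other, say $f(y'_{n_0})=f(y_{n_0})\bar v$ with $\varepsilon\neq\bar v$ a proper suffix of $f(y'_{n_0})$; cancelling $f(y_{n_0})$ on the left yields $f(T^{n_0+1}\bfy)=\bar v\,f(T^{n_0+1}\bfy')$, again contradicting $(\star)$. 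Hence $\bfy=\bfy'$, completing the induction.

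It remains to prove $(\star)$, and this is where all three properties in $\calN$ are used; the argument is the synchronization argument from the proof of Lemma~\ref{2013-04-11 17:37}. Suppose $\bfz,\bfz',v,c$ are as in $(\star)$. Since every letter grows, choose a prefix $y$ of $\bfz'$ with $\nabs{f(y)}\geq\ell$, the synchronization delay; then $y\in\calL(\calX)$, so $f(y)$ has a synchronizing point $(w_1,w_2)$ with $y=y_1y_2$ and $f(y_i)=w_i$. Because $f(\bfz)=v\,f(\bfz')$, the word $f(\bfz)$ begins with $v\,f(y)$; extending this to the right up to the next block boundary of the expansion $f(\bfz)=f(z_0)f(z_1)\cdots$ produces a word $v\,f(y)\,u' = f(z_0\cdots z_r)\in f\bparen{\calL(\calX)}$. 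Applying the synchronizing-point property with left part $v_1=v$ gives $v\,f(y_1)\in f\bparen{\calL(\calX)}$, say $v\,f(y_1)=f(g)$ with $g\in\calL(\calX)$. Now $f(y_1)$ is a suffix of $f(g)$, so, because $f(A)$ is a suffix code and $f$ is injective on $\calL(\calX)$, one cancels $f$ block by block from the right (exactly as in Lemma~\ref{2013-04-11 17:37}) to conclude that $v$ is a product of complete images, $v=f(g')$ with $g'\in\calL(\calX)$; since $v\neq\varepsilon$, write $g'=b_1\cdots b_k$ with $k\geq1$. Then $f(b_k)$ is a suffix of $v=f(g')$, hence of $f(c)$, so the suffix-code property forces $b_k=c$, whence $f(c)$ is a suffix of $v$ and $\nabs{v}\geq\nabs{f(c)}$ — contradicting that $v$ is a proper suffix of $f(c)$. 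This proves $(\star)$, and with it the lemma.

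The step I expect to be the main obstacle is the proof of $(\star)$, specifically getting the suffix-code hypothesis to bite. That hypothesis only controls how images overlap at their right ends, whereas the word $v$ we must eliminate sits at the left boundary between $s'$ (resp. $f(y_{n_0})$) and the block $f(\bfy')$, so it cannot be attacked directly. Routing through an arbitrarily long, right-extendable factor $f(y)\subseteq f(\bfz)$ and its synchronizing point is precisely what certifies, from the right, that $v\,f(y_1)$ — and after cancellation $v$ itself — is a product of complete images; only then is $v$ in a form to which the suffix code applies. Making this precise requires care with the left and right completions needed to invoke the synchronizing-point hypothesis, with the harmless case $y_1=\varepsilon$, and with the block-by-block right cancellation that converts ``$f(y_1)$ is a suffix of $f(g)$'' into ``$v=f(g')$''; these are the same manipulations already carried out in the proof of Lemma~\ref{2013-04-11 17:37}, so the present lemma is essentially a second application of that technique.
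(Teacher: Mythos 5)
Your proof is correct and follows essentially the same route as the paper's: a letter-by-letter induction using Lemma~\ref{2013-04-10 17:21} and the $T$-invariance of $\calS$ (Lemma~\ref{2013-04-11 17:37}), with the key cancellation step ($u f(\bfy) = u' f(\bfy')$ forces $u = u'$, $\bfy = \bfy'$ for suffixes $u, u'$ of images of letters) justified by circularity plus the suffix-code property. The paper merely asserts this cancellation with a pointer to the synchronization argument in the proof of Lemma~\ref{2013-04-11 17:37}; your statement $(\star)$ and its proof supply exactly the details that assertion leaves implicit.
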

\begin{proof}
For every $u,v \in A^{*}$ and $\bfx, \bfy \in \calX$, we have that $uf(\bfx) = vf(\bfy)$
implies $u = v$ and $\bfx = \bfy$. 
This follows from the circularity and suffix code property of~$f$. 
(See also the proof of Lemma~\ref{2013-04-11 17:37}.)
Therefore if $\bfs, \bfs' \in \calS$ and $\pi(\bfs) = \pi(\bfs')$, then Lemma~\ref{2013-04-10 17:21} gives  
\[
c(s_{0}) f\bparen{\pi(T\bfs)} = c(s'_{0}) f\bparen{\pi(T\bfs')},
\]
so that $c(s_{0}) = c(s'_{0})$ and $\pi(T\bfs) = \pi(T\bfs')$.
Thus $s_{0} = s'_{0}$, and since $T\bfs, T\bfs' \in \calS$ by Lemma~\ref{2013-04-11 17:37},
we can repeat the argument obtaining $s_{1} = s'_{1}$, $s_{2} = s'_{2}$, \ldots.
Therefore $\bfs = \bfs'$.
\end{proof}

\begin{remark}
In Lemma~\ref{2013-04-10 20:39} above, the assumption that $f$ is circular is crucial: If $f\colon a^{*}\rightarrow a^{*}$ is defined by $f(a) = aa$, then $\calS = S^{\N}$ while $\calX = \nsset{a^{\omega}}$,
so $\pi$ is anything but injective! Nevertheless, $f$ satisfies all conditions of $\calN$, except circularity.
\end{remark}

Now we are ready to define the desired dynamics on $\calS$.

\begin{theorem}\label{2013-04-12 22:44}
Suppose that $f \in \calN$. 
Let $H \colon \calS \rightarrow \calS$ be the mapping given by $H = T\circ G$.
Then $\pi \circ H = T \circ \pi$ and so $\pi \colon (\calS, H) \rightarrow (\calX, T)$ is a conjugacy.
\begin{center}
\begin{tikzpicture}[node distance = 2cm, auto]
\node(X0) {$\calS$}; 
\node(X1) [right of = X0] {$\calS$}; 
\node(E0) [below of = X0] {$\calX$}; 
\node(E1) [right of = E0] {$\calX$}; 

\draw[->] (X0) to node {$H$} (X1);
\draw[->] (E0) to node {$T$} (E1);
\draw[->] (X0) to node {$\pi$} (E0);
\draw[->] (X1) to node {$\pi$} (E1);
\end{tikzpicture}
\end{center} 

\end{theorem}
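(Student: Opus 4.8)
The plan is to verify the intertwining identity $\pi\circ H = T\circ\pi$ by tracking expansions one symbol at a time, and then to upgrade the factor map $\pi$ to a conjugacy using the topological properties already on record.

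\emph{Step 1: the identity $\pi\circ H = T\circ\pi$.} Fix $\bfs = s_0s_1\cdots\in\calS$, write $\bfx = \pi(\bfs)\in\calX$, and let $a_0\in A$ be the first letter of $\bfx$ — equivalently, the first letter of $c(s_0)$. The crucial (and only slightly fussy) point is that the first symbol $t_0$ of $G(\bfs)$ always satisfies $c(t_0) = f(a_0)$. This is read off from the two cases of~\eqref{2013-04-13 21:25} together with the extension~\eqref{2013-04-14 16:13} of $\lambda$: if $s_0\in S_1$ then $t_0 = \lambda(s_0)$ and $c(t_0) = f\bparen{c(s_0)} = f(a_0)$; if $s_0\in S\setminus S_1$, writing $c(s_0) = a_0 u$ with $u\in A^+$ gives $t_0 = c^{-1}\bparen{f(a_0)}$, so again $c(t_0) = f(a_0)$. (Note $f(a_0)$ is nonempty since $a_0$ is growing, so $c^{-1}\bparen{f(a_0)}$ makes sense.) Now I would combine three observations. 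First, $\pi(G\bfs) = f(\bfx)$ by~\eqref{2013-04-12 19:15}. Second, since $\bfx$ begins with $a_0$, $f(\bfx) = f(a_0)\,f(T\bfx)$, hence $T^{\nabs{f(a_0)}}f(\bfx) = f(T\bfx)$. Third, Lemma~\ref{2013-04-10 17:21} applied to $G\bfs$ gives $\pi(G\bfs) = c(t_0)\,f\bparen{\pi(TG\bfs)} = f(a_0)\,f\bparen{\pi(H\bfs)}$, hence $T^{\nabs{f(a_0)}}\pi(G\bfs) = f\bparen{\pi(H\bfs)}$. Comparing these (using $\pi(G\bfs)=f(\bfx)$) yields $f(T\bfx) = f\bparen{\pi(H\bfs)}$. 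Since $H\bfs\in\calS$ — because $G(\calS)\subseteq\calS$ and $T(\calS)\subseteq\calS$ by Lemma~\ref{2013-04-11 17:37} — both $T\bfx$ and $\pi(H\bfs)$ lie in $\calX$, and $f$ is injective on $\calX$ (the special case of empty $u,v$ in the first line of the proof of Lemma~\ref{2013-04-10 20:39}, which is where circularity and the suffix-code property enter). Therefore $\pi(H\bfs) = T\bfx = T\pi(\bfs)$. This proves $\pi\circ H = T\circ\pi$ and, in passing, that $H$ maps $\calS$ to $\calS$; continuity of $H = T\circ G$ is inherited from continuity of $T$ and $G$, so $(\calS,H)$ is a topological dynamical system.

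\emph{Step 2: $\pi$ is a conjugacy.} The map $\pi\colon\calS\to\calX$ is continuous by Lemma~\ref{2013-04-10 20:45}, surjective by Lemma~\ref{2013-04-10 20:38}, and injective by Lemma~\ref{2013-04-10 20:39}. Since $\pi$ is continuous and $\calX$ is closed, $\calS = \pi^{-1}(\calX)$ is a closed subset of the compact space $S^{\N}$, hence compact; as $\calX$ is metric and thus Hausdorff, the continuous bijection $\pi$ is a homeomorphism. Combined with the identity $\pi\circ H = T\circ\pi$ from Step~1 (which is precisely the asserted commuting square), this exhibits $\pi$ as a conjugacy of $(\calS,H)$ onto $(\calX,T)$; no invertibility of $T$ is required for this notion.

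The only real work is the bookkeeping in Step~1 — confirming that the leading symbol of $G(\bfs)$ encodes exactly the first block $f(a_0)$, so that the single shift in $H = T\circ G$ cancels precisely the lengthening caused by $f$. Everything substantive has been isolated into earlier lemmas: shift-invariance of $\calS$ (Lemma~\ref{2013-04-11 17:37}) and injectivity of $\pi$ (Lemma~\ref{2013-04-10 20:39}), both of which rely on $f$ being circular together with $f(A)$ being a suffix code; I would simply invoke these as black boxes.
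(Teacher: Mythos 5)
Your proof is correct, but your verification of the intertwining identity takes a genuinely different route from the paper's. The paper splits into two cases according to whether $\nabs{c(s_{0})}\geq 2$ or $\nabs{c(s_{0})}=1$: in the first case it computes $\pi\circ T\circ G(\bfs)$ directly from the explicit description of $G$, obtaining $u\,f\bparen{c(s_{1})}f^{2}\bparen{c(s_{2})}\cdots$ where $c(s_{0})=au$; in the second it observes that $T\circ G=G\circ T$ and chains $\pi\circ G\circ T=f\circ\pi\circ T=T^{\nabs{c(s_{0})}}\circ\pi=T\circ\pi$ using the factor-map relation~\eqref{2013-04-12 19:15} and Lemma~\ref{2013-04-10 17:21}. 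You instead give a uniform, case-free argument: you identify the leading symbol $t_{0}$ of $G(\bfs)$ as $c^{-1}\bparen{f(a_{0})}$ in all cases, derive $f\bparen{T\pi(\bfs)}=f\bparen{\pi(H\bfs)}$, and cancel $f$ using its injectivity on $\calX$. The price of uniformity is this extra appeal to injectivity of $f$ on $\calX$ (a consequence of circularity and the suffix-code property, recorded at the start of the proof of Lemma~\ref{2013-04-10 20:39}), which the paper's direct computation does not need; since the theorem's hypotheses supply it anyway, nothing is lost. A genuine improvement on your side is Step~2: the paper only notes that $\pi$ is a continuous bijection, whereas a conjugacy requires a homeomorphism; your observation that $\calS$ is closed in the compact space $S^{\N}$, so that the continuous bijection $\pi$ onto the Hausdorff space $\calX$ is automatically a homeomorphism, makes this explicit.
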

\begin{proof}
Observe first that  $H(\calS) \subset \calS$ by Lemma~\ref{2013-04-11 17:37},
so the definition of $H$ is sound. 
The mapping $\pi$ is surjective by Lemma~\ref{2013-04-10 20:38} and injective by Lemma~\ref{2013-04-10 20:39}, so it is a bijection. Furthermore $\pi$ is continuous by Lemma~\ref{2013-04-10 20:45}.
Finally, let us verify $\pi \circ H = T \circ \pi$. Let $\bfs = s_{0} s_{1} \cdots \in \calS$ with $s_{i} \in S$.
If $\nabs{c(s_{0})} \geq 2$, then we leave it to the reader to check that, by denoting $c(s_{0}) = au$ with $a\in A$, 
we have 
\[
\pi \circ H(\bfs) = \pi \circ T \circ G (\bfs) = u f\bparen{c(s_{1})} f^{2}\bparen{c(s_{2})} \cdots = T \circ \pi (\bfs). 
\]
If $\nabs{c(s_{0})} = 1$, then it is readily seen that $T \circ G (\bfs) = G \circ T(\bfs)$.
Using this, Equation~\eqref{2013-04-12 19:15}, and Lemma~\ref{2013-04-10 17:21} in this order gives
\[
\pi \circ H(\bfs) = \pi \circ T \circ G (\bfs)
				  = \pi \circ G \circ T (\bfs)
				  = f \circ \pi \circ T(\bfs)
				  = T^{\nabs{c_{0}}} \circ \pi(\bfs) = T \circ \pi(\bfs),
\]
and the proof is complete.
\end{proof}

The rest of this section is devoted to developing a few results for understanding the language of $\calS$.
They will be needed in the next sections that deal with the suffix conjugates of the Fibonacci and the Thue-Morse subshifts.

If $u$ is a finite nonempty word, we denote by~$u^{\flat}$ and $\talf u$ the words obtained from~$u$ by deleting its last and first letter, respectively. 

If a finite word $u$ is not in $\calL(\calX)$, then $u$ is called a \emph{forbidden word} of $\calX$. 
If both $\talf u$ and $u^{\flat}$ are in $\calL(\calX)$, then $u$ is a \emph{minimal forbidden word} of~$\calX$.
There is a connection between the minimal forbidden words and the so-called bispecial factors of an infinite word.
See a precise formulation of this in~\cite{MigResSci2002} and examples in Sections~\ref{2013-04-14 13:40} and~\ref{2013-04-14 13:38}.

We say that a word $u \in  S^{*}$ is a \emph{cover} of a word $v \in A^{*}$ if $v \subset \pi(u)$.
Furthermore, we say that the cover~$u$ is \emph{minimal} if 
$v \not\subset \pi(u^{\flat})$ and $v \not\subset f\bparen{\pi(\talf u)}$.
The latter expression comes from the identity $\pi(u) = c(u_{0}) f\bparen{\pi(\talf u)}$,
where $u_{0}$ is the first letter of~$u$, given by Lemma~\ref{2013-04-10 17:21}.

Let $\calC$ be the set of minimal covers of the minimal forbidden 
factors of~$\calX$.

\begin{lemma}\label{2013-04-11 12:48}
Suppose $f \in \calN$. Let $\bfs \in S^{\N}$. Then $\bfs \notin \calS$ if and only if $\bfs$ has a factor in $\calC$.
\end{lemma}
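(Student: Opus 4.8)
The statement is an ``iff'', so I would prove the two directions separately, and I expect the forward direction (if $\bfs \notin \calS$ then $\bfs$ contains a factor in $\calC$) to be the substantive one. The easy direction is the converse: suppose $\bfs$ has a factor $u \in \calC$, say $u$ is a minimal cover of some minimal forbidden word $v$ of $\calX$. Write $\bfs = p\, u\, \bft$ with $p \in S^{*}$, so $|p| = k$ say. By Lemma~\ref{2013-04-10 17:21}, $\pi(\bfs) = \pi(p)\, f^{k}\bparen{\pi(u\bft)}$, and again $\pi(u\bft) = \pi(u) f^{|u|}(\pi(\bft))$, so $\pi(u)$ occurs in $\pi(u\bft)$ as a prefix and hence $f^{k}(\pi(u)) \subset f^{k}(\pi(u\bft)) \subset \pi(\bfs)$. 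Since $v \subset \pi(u)$, we get $f^{k}(v) \subset \pi(\bfs)$. As each letter is growing and $f^{k}$ is a morphism, $f^{k}(v)$ contains $v$ only if\ldots{} actually the cleanest route is: $v \subset \pi(u) \subset \pi(\bfs)$ directly, using that $\pi(u)$ is literally a factor of $\pi(\bfs)$ (taking the portion of $\pi(\bfs)$ ``coming from'' the block $u$ — this is exactly what Lemma~\ref{2013-04-10 17:21} bookkeeps). Then $v \subset \pi(\bfs)$ and $v \notin \calL(\calX)$, so $\pi(\bfs) \notin \calX$, i.e. $\bfs \notin \calS$.

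For the forward direction, assume $\bfs \notin \calS$, i.e. $\pi(\bfs) \notin \calX$. Then some finite factor of $\pi(\bfs)$ is forbidden, and by taking a shortest such factor we get a \emph{minimal} forbidden word $v$ with $v \subset \pi(\bfs)$. (A shortest forbidden factor $v$ has $v^{\flat}, \talf v \in \calL(\calX)$ by minimality of length, so $v$ is minimal forbidden.) Now I need to locate $v$ inside the expansion $\pi(\bfs) = c(s_{0}) f(c(s_{1})) f^{2}(c(s_{2})) \cdots$. Fix an occurrence of $v$; it is covered by finitely many of the blocks $f^{i}(c(s_{i}))$, say those with indices in some interval, so there is a factor $u = s_{j} s_{j+1} \cdots s_{m}$ of $\bfs$ with $v \subset \pi(T^{j}\bfs)$ ``within the first $m-j+1$ blocks'', equivalently $v \subset$ the appropriate shift of $f^{j}(\pi(u))$. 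More carefully: by Lemma~\ref{2013-04-10 17:21}, for any factorization $\bfs = p\,u\,\bft$, $\pi(\bfs) = \pi(p) f^{|p|}(\pi(u\bft))$ and $\pi(u\bft) = \pi(u) f^{|u|}(\pi(\bft))$; the occurrence of $v$ we fixed lies, after peeling off the prefix $\pi(p)$ (choosing $|p|$ so that $v$ starts at or after position $|\pi(p)|$) and truncating (choosing $|u|$ large enough that $v$ ends before $f^{|p|}(\pi(u))$ does), inside $f^{|p|}(\pi(u))$. Applying $f^{|p|}$ is a morphism; since $f$ is injective on $\calL(\calX)$ and (via the suffix-code property, as in the proof of Lemma~\ref{2013-04-11 17:37}) reflects factor membership appropriately, $v \subset f^{|p|}(\pi(u))$ forces — possibly after absorbing the $f^{|p|}$ by the same synchronization/desubstitution argument — that $v \subset \pi(u)$ for a suitable finite factor $u$ of $\bfs$, i.e. $u$ is a cover of $v$.

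Given a cover $u$ of the minimal forbidden word $v$, I then pass to a \emph{minimal} one: shrink $u$ from the right (replacing $u$ by $u^{\flat}$) as long as $v \subset \pi(u^{\flat})$ still holds, and shrink from the left (replacing $u$ by $\talf u$, noting $\pi(\talf u)$ relates to $\pi(u)$ via $\pi(u) = c(u_{0}) f\bparen{\pi(\talf u)}$ from Lemma~\ref{2013-04-10 17:21}) as long as $v \subset f\bparen{\pi(\talf u)}$ still holds — here one must check shrinking from the left is legitimate, namely that if $v$ lies entirely past the first block $c(u_{0})$ then $v \subset f(\pi(\talf u))$, which is immediate from the identity. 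This terminates since $|u|$ decreases, and the resulting $u$ satisfies $v \subset \pi(u)$, $v \not\subset \pi(u^{\flat})$, $v \not\subset f(\pi(\talf u))$, i.e. $u \in \calC$. Since $u$ is a factor of $\bfs$, this completes the forward direction.

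\textbf{Main obstacle.} The delicate point is the bookkeeping in the forward direction: showing that a forbidden factor of $\pi(\bfs)$ can be ``pulled back'' to a forbidden word $v$ that sits \emph{inside a single $\pi(u)$} for a finite factor $u$ of $\bfs$, rather than being smeared across the $f^{i}$-layers in a way that resists desubstitution. The tool is the circularity/suffix-code machinery already exploited in Lemma~\ref{2013-04-11 17:37} and the ``$uf(\bfx)=vf(\bfy) \Rightarrow u=v, \bfx=\bfy$'' fact from the proof of Lemma~\ref{2013-04-10 20:39}: these let one strip off outer $f$-layers cleanly. I would isolate the occurrence of $v$ at a fixed position, choose the finite window $u = s_{j}\cdots s_{m}$ of $\bfs$ whose image $f^{j}(\pi(u))$ (appropriately truncated/shifted) contains that occurrence, and then either argue directly that $v \subset \pi(u)$ after shifting by an amount that is a multiple encoded by $\pi(s_0\cdots s_{j-1})$, or observe that whatever the precise window, the minimization step above still produces an element of $\calC$ occurring in $\bfs$. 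The rest — the converse direction and the minimization — is routine manipulation of the $\pi$-identities from Lemma~\ref{2013-04-10 17:21}.
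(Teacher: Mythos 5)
The converse (``easy'') direction of your proposal contains a false step. If $\bfs = p\,u\,\bft$ with $\nabs{p}=k$, then by Lemma~\ref{2013-04-10 17:21} the portion of $\pi(\bfs)$ contributed by the block $u$ is $f^{k}\bparen{\pi(u)}$, not $\pi(u)$; so $\pi(u)$ is \emph{not} ``literally a factor of $\pi(\bfs)$,'' and the route $v \subset \pi(u) \subset \pi(\bfs)$ that you settle on does not go through. Your first computation, $f^{k}(v)\subset\pi(\bfs)$, is correct, but you would then still need $f^{k}(v)\notin\calL(\calX)$, which does not follow from $f(\calL)\subset\calL$ (that inclusion gives the implication in the wrong direction) and would itself require the circularity machinery. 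The paper sidesteps all of this by invoking Lemma~\ref{2013-04-11 17:37}: if $\bfs\in\calS$ then $T^{k}\bfs=u\bft\in\calS$, and now $\pi(u)$ genuinely \emph{is} a prefix of $\pi(u\bft)\in\calX$, so the forbidden word inside $\pi(u)$ gives the contradiction.

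The forward direction has a more serious structural gap. You try to keep the \emph{same} minimal forbidden word $v$ throughout and pull it back through the $f$-layers until $v\subset\pi(u)$ for some factor $u$ of $\bfs$; but an occurrence of $v$ deep inside $\pi(\bfs)$ sits in $f^{j}\bparen{\pi(u)}$ for large $j$, and there is no reason $v$ itself should occur in $\pi(u')$ for \emph{any} factor $u'$ of $\bfs$. Relatedly, your left-shrinking step maintains only $v\subset f\bparen{\pi(\talf u)}$, which is not the statement that $\talf u$ is a cover of $v$ (that would be $v\subset\pi(\talf u)$), so the invariant ``$u$ is a cover of $v$'' is lost after one shrink. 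What the paper does instead is change the forbidden word at each step: it builds a sequence $v_{0},v_{1},\ldots$ of minimal forbidden words with $v_{n}\subset f(v_{n+1})$, $v_{n+1}$ a factor of $\pi(T^{n+1}\bfs)$, and $\nabs{v_{n+1}}\leq\nabs{v_{n}}$, shows the $v_{n}$ are pairwise distinct because every letter is growing (hence the sequence is finite), and concludes that the last $v_{k}$ admits a minimal cover that is a \emph{prefix} of the corresponding shift of $\bfs$, hence a factor of $\bfs$. Your sketch is missing both this replacement of the forbidden word under desubstitution and the termination argument; without them the minimization procedure neither preserves its invariant nor is guaranteed to terminate in an element of $\calC$.
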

\begin{proof}
Suppose that $\bfs$ has a factor in $\calC$, so that $\bfs = pt\bfs'$ with $t \in \calC$. If $\bfs \in \calS$, then $T^{\nabs{p}} \bfs = t\bfs' \in \calS$ by Lemma~\ref{2013-04-11 17:37}. But $\pi(t\bfs')$ has prefix $\pi(t)$, in which 
a forbidden word occurs by the definition of $\calC$, a contradiction. 

Conversely, suppose that $\bfs \notin \calS$. Then $\pi(\bfs) \notin \calX$, so there exists a
minimal forbidden word $v_{0}$ of $\calX$ occurring in $\pi(\bfs)$. Let $u_{0}$ be the shortest prefix of $\bfs$ such that 
$v_{0} \subset \pi(u_{0})$. Then either $u_{0}$ is a minimal cover of $v_{0}$ or 
 $v_{0} \subset f\bparen{\pi(\talf u_{0})}$.
In the former case we are done, so suppose the latter case holds. Then $v_{0} \subset f\bparen{\pi(T\bfs)}$
and so  $\pi(T\bfs)$ has a factor $v_{1}$ such that $v_{0} \subset f(v_{1})$ and $\nabs{v_{1}} \leq \nabs{v_{0}}$.
Since $f(\calL) \subset \calL$, it follows that  $v_{1}$ is a forbidden word of $\calX$;
by taking a factor of $v_{1}$ if necessary, we may assume~$v_{1}$ is also minimal.
Let $u_{1}$ be the shortest prefix of $T\bfs$ such that $v_{1} \subset \pi(u_{1})$.
Then either $u_{1}$ is a minimal cover of $v_{1}$ or  $v_{1} \subset f\bparen{\pi(\talf u_{1})}$.
In the former case $u_{1}\in \calC$  and so $\bfs$ has a factor $u_{1}$ in $\calC$.
In the latter case  $v_{1} \subset f\bparen{\pi(T^{2}\bfs)}$, and we continue the process.
This generates a sequence $v_{0}$, $v_{1}$, \ldots\ of minimal forbidden words of $\calX$ such that
$v_{n} \subset f\bparen{\pi(T^{n+1}\bfs)}$, $v_{n + 1} \subset f(v_{n})$, and $\nabs{v_{n+1}} \leq \nabs{v_{n}}$.
Each letter $a \in A$ is growing because $f \in \calN$, and therefore the words $v_{n}$ are pairwise distinct.
Thus the length restriction on the $v_{n}$'s implies that the sequence $v_{0}, v_{1}, \dots$ is finite with a last element, say, $v_{k}$.
The fact that there is no element $v_{k+1}$ means that $T^{k+1}\bfs$ has a prefix $u_{k}$ 
that is a minimal cover of $v_{k}$. Since $u_{k}\in \calC$ then occurs also in $\bfs$, we are done.
\end{proof}

\begin{theorem}\label{2013-04-14 11:13}
Suppose that $f \in \calN$ and  that the set  $\calC$  of minimal covers of minimal forbidden words is a regular language. Then the language of $\calS$ is regular.  In particular, $(\calS, T)$ is a sofic subshift.
%
%$\bfs \in S^{\N}$ is in $\calS$ if and only if it is the label of an infinite walk in the graph of a DFA
%of the complement of $S^{*}\calC S^{*}$.
\end{theorem}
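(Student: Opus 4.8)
The plan is to deduce regularity of $\calL(\calS)$ from Lemma~\ref{2013-04-11 12:48}, which characterizes membership in $\calS$ by the avoidance of factors from $\calC$. First I would observe that Lemma~\ref{2013-04-11 12:48} immediately implies
\[
\calL(\calS) = \bset{w \in S^{*}}{\text{no factor of $w$ lies in $\calC$, and $w$ is a factor of some $\bfs \in \calS$}}.
\]
Strictly, one has to be slightly careful: the set of words over $S$ avoiding all factors in $\calC$ contains $\calL(\calS)$, but may be larger, since a finite word could avoid $\calC$ yet fail to extend to an infinite word of $\calS$. So the first real task is to argue that $\calL(\calS)$ is exactly the set of \emph{extendable} $\calC$-avoiding words, and then to show this set is regular.

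The cleanest route avoids the extendability subtlety altogether. I would argue that $w \in \calL(\calS)$ if and only if $w$ avoids every factor in $\calC$ \emph{and} $w$ avoids every minimal forbidden word of $\calS$ viewed through $\pi$... actually, more directly: define $\calL'(\calS) = \bset{w \in S^{*}}{w \text{ has no factor in } \calC}$. I claim $\calL(\calS) = \calL'(\calS)$. The inclusion $\calL(\calS)\subseteq\calL'(\calS)$ is the ``only if'' direction of Lemma~\ref{2013-04-11 12:48} applied to any infinite extension. For the reverse, given $w \in \calL'(\calS)$, I must produce $\bfs \in \calS$ with $w \subset \bfs$; the natural candidate is to extend $w$ to the right by some periodic or eventually-defined infinite word over $S$ that still avoids $\calC$, then invoke Lemma~\ref{2013-04-11 12:48}. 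The existence of such an extension is the one genuinely nontrivial point. One way: since $\pi$ is surjective (Lemma~\ref{2013-04-10 20:38}) and $T(\calS)\subseteq\calS$ (Lemma~\ref{2013-04-11 17:37}), every prefix of every $\bfs\in\calS$ is in $\calL(\calS)$; but I also need that every $\calC$-avoiding word \emph{is} such a prefix, which is not automatic. I expect the intended argument is simply that $\calL(\calS)$ equals the set of $\calC$-avoiding words that are covers of words in $\calL(\calX)$ in a suitable sense, and that $\pi^{-1}(\calL(\calX))\cap\{\calC\text{-avoiding}\}$ can be shown regular via a product/synchronization construction; alternatively the paper may simply accept $\calL'(\calS)$ as ``the language defining the sofic shift'' since a subshift is determined by its forbidden words, and $\calS = \bset{\bfs\in S^{\N}}{\bfs\text{ avoids }\calC}$ by Lemma~\ref{2013-04-11 12:48}, which is a closed shift-invariant set, hence a subshift with forbidden set $\calC$.

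Granting that reduction, the regularity argument is short and is the content of ``$\calC$ regular $\Rightarrow$ $\calS$ sofic''. By Lemma~\ref{2013-04-11 12:48}, $\calS$ is precisely the set of infinite words over $S$ none of whose factors lies in $\calC$. A subshift defined by forbidding a \emph{regular} set of blocks is sofic: the standard construction takes a DFA recognizing the complement language (words containing no factor in $\calC$), which since $\calC$ is regular is itself regular — the set of words avoiding all members of $\calC$ is the complement of $S^{*}\calC S^{*}$, and regular languages are closed under concatenation and complement — and this DFA, read as a labelled graph, presents $\calS$. Concretely: let $\calA$ be a DFA for $S^{*}\setminus(S^{*}\calC S^{*})$; delete non-coaccessible states; the resulting graph is a finite labelled graph whose bi-infinite (here one-sided) label sequences are exactly $\calS$, so $\calS$ is sofic and $\calL(\calS)$, being the set of finite labels of paths in a finite graph, is regular.

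The main obstacle, as flagged above, is the gap between ``$\calC$-avoiding finite word'' and ``factor of some element of $\calS$'': one must know that $\calS$ is nonempty and that $\calC$-avoidance is the \emph{only} obstruction even at the finite level, i.e. that $\bfs\in\calS \iff \bfs$ avoids $\calC$, which is exactly Lemma~\ref{2013-04-11 12:48} — so in fact there is no gap for \emph{infinite} words, and the language of a subshift is by definition the set of factors of its points, making $\calL(\calS)$ automatically equal to the factorial-extendable closure of $\calL'(\calS)$; since the factors-of-a-sofic-shift language is regular whenever the shift is sofic, the argument closes. Thus the real work is only the routine automata-theoretic step of turning ``forbid a regular block set'' into ``sofic,'' and verifying $\calL(\calS)$ is the factor language of that presentation; I would present this via the complement-and-trim DFA construction sketched above.
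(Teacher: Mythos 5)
Your overall route is the paper's: by Lemma~\ref{2013-04-11 12:48}, $\calS$ is exactly the set of infinite words over $S$ with no factor in $\calC$; one takes a DFA for the regular language $S^{*}\setminus S^{*}\calC S^{*}$, trims it, and reads $\calL(\calS)$ off the resulting labelled graph. You also correctly isolate the one nontrivial point, namely that a finite $\calC$-avoiding word need not be a factor of any element of $\calS$, so that $\calL(\calS)$ may be a proper subset of $S^{*}\setminus S^{*}\calC S^{*}$.

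The gap is that the concrete trimming step you propose --- ``delete non-coaccessible states'' --- does not resolve that point. The language $S^{*}\setminus S^{*}\calC S^{*}$ is factorial, so in its minimal DFA every state other than the dead sink is accepting and hence trivially coaccessible; your deletion removes only the sink, and the trimmed automaton still accepts every $\calC$-avoiding word, including those with no infinite $\calC$-avoiding right-extension (e.g.\ with $\calC=\{aa,ab\}$ over $\{a,b\}$, the word $a$ avoids $\calC$ but extends to no infinite $\calC$-avoiding word). What is needed, and what the paper does, is to delete the states from which there are no \emph{arbitrarily long} directed walks to accepting states; since the automaton is finite, the surviving states are exactly those lying on infinite walks, so a word is accepted by the trimmed NFA if and only if it is a prefix of the label of an infinite walk, i.e.\ (by Lemma~\ref{2013-04-11 12:48} together with the shift-invariance of $\calS$ from Lemma~\ref{2013-04-11 17:37}) if and only if it lies in $\calL(\calS)$. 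Your fallback remark that the factor language of a sofic shift is regular would close the argument if invoked as a black box, but as written the construction meant to witness it is the insufficient one, so you should either cite that standard fact explicitly or replace coaccessibility by the ``arbitrarily long continuation'' condition.
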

\begin{proof}
Since $\calC$ is regular, so is the complement $S^{*}\setminus S^{*}\calC S^{*}$, which we denote by $L_{0}$. 
Let $M_{0}$ be the  minimal DFA accepting $L_{0}$. Modify $M_{0}$ by removing the states from which there are no
arbitrarily long  directed walks to accepting states. Remove also the corresponding edges and denote the obtained NFA 
by~$M$.
We claim that the language $\calL(\calS)$ of $\calS$ is the language $L(M)$ recognized by $M$. 

If $w \in \calL(\calS)$, then $w$ is in $S^{*}\setminus S^{*}\calC S^{*}$ by Lemma~\ref{2013-04-11 12:48}, so that it is accepted by $M_{0}$. Furthermore, since $w$ has arbitrarily long extensions to the right that are also 
in $\calL(\calS)$, each accepted by $M_{0}$ of course, it follows that $w$ is accepted by $M$. Conversely, by the construction of $M$, if $w \in L(M)$, then there exists an infinite walk on the graph of $M$ whose label contains $w$. The label of this infinite path is in~$\calS$.
\end{proof}

\section{The suffix conjugate of the Fibonacci subshift}\label{2013-04-14 13:40}

Recall the Fibonacci morphism $\varphi$ for which $\o \mapsto \o\i$ and $\i \mapsto \o$,
the Fibonacci word $\bff = \varphi^{\omega}(\o)$, and the Fibonacci subshift $(\calX_{\varphi}, T)$.
The suffix conjugate $(\calS_{\varphi}, H_{\varphi})$ of the Fibonacci subshift is guaranteed to exist by
Theorem~\ref{2013-04-12 22:44}. The goal of this section is to give a characterization for $\calS_{\varphi}$ and $H_{\varphi}$, and it will be achieved in Theorem~\ref{2013-04-05 16:26}.

The set of suffixes of $\varphi$ is $S' = \nsset{\o, \i, \o\i}$, and we define a bijection $c$
between $S = \nsset{\o, \i, \ka}$ and $S'$ by $c(\o) = \o$, $c(\i) = \i$, and $c(\ka) = \o\i$.
In this case we have $\calS_{\varphi}  \subset \terna^{\N}$.

We will now continue by finding a characterization for the set $\calC_{\varphi}$ of minimal covers of minimal forbidden words
of the Fibonacci subshift.

Denote $f_n = \varphi^{n-1}(\o)$ for all $n\geq1$, so that in particular $f_1 = \o$ and $f_2 = \o\i$.
For $n\geq 2$, we let $p_n$ be the word defined by the relation $f_n = p_n ab$, where $ab \in \{\o\i, \i\o\}$.
Then $p_2 = \epsilon$ and  $p_3 = \o$. The words $p_{n}$ are known as the \emph{bispecial factors} of the Fibonacci word, and they possess the following well-known and easily established
 properties:
\begin{itemize}
\item For all $n \geq 2$, we have
\begin{equation}\label{2013-04-14 02:13}
f_{n} f_{n-1} = p_{n+1} ab  \qqtext{and}  f_{n-1} f_{n} = p_{n+1} ba,
\end{equation}
where $ab=\i\o$ for even $n$ and $ab=\o\i$ for odd $n$.
\item For all $n\geq 2$, we have $\varphi(p_n)\o = p_{n+1}$.
\end{itemize}

The minimal forbidden words of the Fibonacci word $\bff$ can be expressed in terms of the bispecial factors $p_{n}$ as follows~\cite{MigResSci2002}. For every $n\geq2$, write
\[
d_{n} = 
\begin{cases}
\i p_{n} \i & \text{for $n$ even,}\\
\o p_{n} \o& \text{for $n$ odd.}
\end{cases}
\] 
Then a word is a minimal forbidden word of $\bff$ if and only if it equals $d_{n}$ for some $n\geq2$.
The first few $d_{n}$'s are $\i\i$, $\o\o\o$, and $\i\o\i\o\i$.

If $x$ is a finite word and $y$ a finite or infinite word, we write ($x <_{p} y$) $x \leq_{p} y$ to indicate that $x$ is a (proper) prefix of~$y$. We say two finite words $x,y$ are \emph{prefix compatible} if one of $x \leq_{p} y$ or 
$y \leq_{p} x$ holds.

\begin{lemma}\label{2013-03-28 17:54}
Let $x,y \in \bina^{+}$ and $k \geq 1$. Then $\varphi^{k}(x) <_{p} \varphi^{k}(y)$ implies 
$x^{\flat} <_{p} y^{\flat}$.
\end{lemma}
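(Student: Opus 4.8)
The plan is to prove the claim for $k=1$ first, since the general case then follows by an immediate induction: if $\varphi^k(x) <_p \varphi^k(y)$ write $x' = \varphi^{k-1}(x)$, $y' = \varphi^{k-1}(y)$; the case $k=1$ gives $(x')^\flat <_p (y')^\flat$, but $(x')^\flat$ and $(y')^\flat$ are obtained from $\varphi^{k-1}(x)$ and $\varphi^{k-1}(y)$ by deleting the last letter, and since $\varphi$ is non-erasing one sees $\varphi^{k-1}(x^\flat) \leq_p (x')^\flat$ with the missing part being a proper prefix of $\varphi$ applied to the last letter of $x$; tracking these prefixes carefully lets us descend. Actually, I would organize the induction more cleanly by proving the slightly more robust statement directly for $k=1$ and then iterating, being careful that after deleting one letter the two words $x^\flat, y^\flat$ are honest prefixes of the next-level words.

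For the base case $k=1$: suppose $\varphi(x) <_p \varphi(y)$ with $x,y \in \bina^+$. Since $\varphi$ is injective on $\bina^*$ and maps distinct letters to the distinct words $\o\i$ and $\o$ (note neither is a prefix of the other — wait, $\o$ \emph{is} a prefix of $\o\i$), I need to be careful: $\varphi$ is not a prefix code. The key observation is instead that $\varphi(x)$ and $\varphi(y)$ can only ``disagree about a letter of $x$ versus a letter of $y$'' at a position where one has $\o\i$ and the other $\o$, i.e. at the end. Concretely, write $x = x^\flat a$ and $y = y^\flat b$ with $a,b \in \bina$. Then $\varphi(x) = \varphi(x^\flat)\varphi(a)$ and $\varphi(y) = \varphi(y^\flat)\varphi(b)$. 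I claim $\varphi(x^\flat) \leq_p \varphi(y^\flat)$ — in fact I want $\varphi(x^\flat) <_p \varphi(y^\flat)$ or $x^\flat <_p y^\flat$ directly. The cleanest route: among all prefixes of $\varphi(y)$ that are images $\varphi(w)$ of a word $w \leq_p y$, the lengths are exactly $\{|\varphi(w)| : w \leq_p y\}$, and between two consecutive such lengths the gap is $1$ or $2$; moreover $\varphi(x^\flat)$ is such an image (of $x^\flat \leq_p y$, once we know $x^\flat$ is a prefix of $y$). So the real content is: a prefix of $\varphi(y)$ that is itself of the form $\varphi(u)$ forces $u \leq_p y$ — this uses that $\o$ cannot appear as $\varphi(\text{something})$ straddling a letter boundary ambiguously, which follows from $f(A)$ being a suffix code / $\varphi$ being circular, or more elementarily from the fact that $\varphi(\o)=\o\i$ ends in $\i$ and $\varphi(\i)=\o$, so the ``$\o$'' boundaries in $\varphi(y)$ are unambiguously locatable.

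The main obstacle, and where I would spend the most care, is exactly this desynchronization issue: since $\varphi(\i) = \o$ is a prefix of $\varphi(\o) = \o\i$, a prefix of $\varphi(y)$ need not factor uniquely, so one cannot naively ``cancel $\varphi$''. I expect the right lemma to invoke is a parsing/recognizability fact: every prefix of $\varphi(y)$ is of the form $\varphi(u)$ or $\varphi(u)\o$ for a unique $u \leq_p y$ (the dangling $\o$ coming from a half-read $\varphi(\o)=\o\i$ — no wait, $\o\i$ read halfway gives $\o$; and $\o$ read halfway gives nothing). So: every prefix $z$ of $\varphi(y)$ satisfies $z = \varphi(u)$ for a unique $u\leq_p y$, OR $z = \varphi(u)\o$ where $u\o \leq_p y$. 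Applying this to $z = \varphi(x)$: either $\varphi(x)=\varphi(u)$ giving $x = u \leq_p y$ (hence $x^\flat \leq_p y^\flat$, even $x^\flat <_p y^\flat$ since $\varphi(x) <_p \varphi(y)$ forces $x \neq y$, so $x <_p y$, so $x^\flat <_p y^\flat$ — careful: $x <_p y$ does give $x^\flat <_p y^\flat$ since $|x^\flat| = |x|-1 < |y|-1 = |y^\flat|$ and $x^\flat$ is a prefix of $x$ which is a prefix of $y$, and $|x^\flat| < |y^\flat|$), or $\varphi(x) = \varphi(u)\o$ with $u\o \leq_p y$; then $\varphi(x^\flat)\varphi(a) = \varphi(u)\o$, and since $\varphi(a) \in \{\o\i, \o\}$ ends in $\i$ or equals $\o$, we must have $\varphi(a) = \o$, i.e. $a = \i$, and $\varphi(x^\flat) = \varphi(u)$, so $x^\flat = u$; then $u\o = x^\flat\o \leq_p y$, and since $a = \i \neq \o$... hmm, I need $x^\flat <_p y^\flat$. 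From $x^\flat \o \leq_p y$ we get $x^\flat \leq_p y$ with $|x^\flat| < |y|$; if $|x^\flat| < |y^\flat|$ we are done, and equality $|x^\flat| = |y^\flat|$ would force $x^\flat = y^\flat$ hence (from $x^\flat\o \leq_p y = y^\flat b$) $b = \o$, but then reconsider whether that's consistent — it would mean $x^\flat = y^\flat$, which is $x^\flat <_p y^\flat$ false but $x^\flat \leq_p y^\flat$; however the claim as stated is $x^\flat <_p y^\flat$, so I'd need to rule this out. I'd handle this residual sub-case by noting $\varphi(x) <_p \varphi(y)$ is proper: if $x^\flat = y^\flat$ then $\varphi(x) = \varphi(y^\flat)\varphi(a)$ and $\varphi(y) = \varphi(y^\flat)\varphi(b)$, and a proper prefix relation between $\varphi(a)$ and $\varphi(b)$ with $a,b \in \bina$ forces $\{a,b\}=\{\i,\o\}$ with $\varphi(\i)=\o <_p \o\i = \varphi(\o)$, i.e. $a=\i, b=\o$, which is consistent — so actually $x^\flat <_p y^\flat$ can fail to be \emph{proper}?? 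Let me recheck with a concrete example: $x = \i$, $y = \o$. Then $\varphi(x) = \o$, $\varphi(y) = \o\i$, and indeed $\varphi(x) <_p \varphi(y)$. And $x^\flat = \epsilon$, $y^\flat = \epsilon$, so $x^\flat <_p y^\flat$ is \emph{false} ($\epsilon$ is not a proper prefix of $\epsilon$)! So either the lemma intends $\leq_p$, or it's using $<_p$ loosely — I will reread: ``$x^{\flat} <_p y^{\flat}$''. Given the counterexample, I suspect in context $x, y$ have extra hypotheses (they arise as words where this degenerate case doesn't occur), or $<_p$ here is meant as $\leq_p$; in my write-up I would prove $x^\flat \leq_p y^\flat$ and additionally $x^\flat <_p y^\flat$ whenever $|x|, |y| \geq 2$ (or under whatever genuine hypothesis applies), flagging the edge case. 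So the final plan:

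\begin{proof}[Proof plan]
First establish the \textbf{parsing lemma}: for any $y \in \bina^*$, every prefix $z$ of $\varphi(y)$ is either $\varphi(u)$ for a (unique) $u \leq_p y$, or $\varphi(u)\,\o$ where $u\o \leq_p y$; this follows because $\varphi$ is injective and the factor boundaries of $\varphi(y)$ are recoverable (each $\o$ in $\varphi(y)$ begins an image, since $\varphi(\o)=\o\i$ and $\varphi(\i)=\o$), a special case of the circularity/suffix-code setup already invoked in the proof of Lemma~\ref{2013-04-11 17:37}. Then prove the case $k=1$ of the lemma by applying the parsing lemma to $z = \varphi(x) <_p \varphi(y)$, splitting into the two cases above, and in each deducing $x \leq_p y$ with $|x| \le |y|$, whence deleting the last letter of each gives $x^\flat <_p y^\flat$ when the lengths are large enough (and $x^\flat \leq_p y^\flat$ in general), using that $\varphi(x)<_p\varphi(y)$ is proper to exclude $x=y$. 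Finally, promote to general $k$ by induction: $\varphi^k(x) <_p \varphi^k(y)$ means $\varphi(\varphi^{k-1}(x)) <_p \varphi(\varphi^{k-1}(y))$, so by the $k=1$ case $\varphi^{k-1}(x)^\flat <_p \varphi^{k-1}(y)^\flat$; since $\varphi$ is non-erasing and $\varphi^{k-1}(x^\flat)$ is a prefix of $\varphi^{k-1}(x)$ differing from it by $\varphi^{k-1}$ of the last letter of $x$, conclude $\varphi^{k-1}(x^\flat) \leq_p \varphi^{k-1}(y^\flat)$, then invoke the inductive hypothesis on $x^\flat, y^\flat$ to finish — taking care, as above, with the degenerate short cases. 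The one genuine obstacle is the parsing lemma's uniqueness-of-factorization step, i.e.\ controlling the desynchronization caused by $\varphi(\i)=\o$ being a prefix of $\varphi(\o)=\o\i$; everything else is bookkeeping on prefix lengths.
\end{proof}
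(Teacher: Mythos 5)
Your instinct that the statement itself is problematic is correct, and it is worth saying so plainly: with $<_{p}$ meaning \emph{proper} prefix, the lemma fails not only for $x=\i$, $y=\o$ but for every pair $x=w\i$, $y=w\o$, since then $\varphi^{k}(x)=\varphi^{k}(w)\varphi^{k-1}(\o)<_{p}\varphi^{k}(w)\varphi^{k}(\o)=\varphi^{k}(y)$ while $x^{\flat}=y^{\flat}=w$. Only the conclusion $x^{\flat}\leq_{p}y^{\flat}$ can hold in general, and that is in fact all the paper's own proof establishes: it opens with ``Suppose $x^{\flat}$ is not a prefix of $y^{\flat}$,'' i.e.\ it argues the contrapositive of the $\leq_{p}$ statement, and the $\leq_{p}$ version is all that is needed where the lemma is applied (in the proof of Lemma~\ref{2013-04-14 01:50}). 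So your decision to downgrade the conclusion to $\leq_{p}$ and flag the degenerate case is the right one, and on this point you are more careful than the source.

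Your route, however, is genuinely different from the paper's, and it has a gap in the passage from $k=1$ to general $k$. The paper never parses prefixes of $\varphi(y)$: it argues that if $x^{\flat}\not\leq_{p}y^{\flat}$ then $x=uat$, $y=ubs$ with $a\neq b$ and $t,s$ nonempty, whence one of $\varphi(at),\varphi(bs)$ begins with $\o\i$ and the other with $\o\o$ (every image of a letter begins with $\o$), so $\varphi(x)$ and $\varphi(y)$ disagree at a common position; iterating the same observation shows no $\varphi^{k}(x)$ is a prefix of $\varphi^{k}(y)$. The contrapositive propagates a stable hypothesis, so the induction is immediate and no recognizability or parsing machinery is needed. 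Your direct argument handles $k=1$ correctly (and your parsing lemma is fine as stated: every $\o$ in $\varphi(y)$ opens an image, so boundaries are recoverable without invoking circularity), but the lift to general $k$ does not go through as written. From $\varphi(\varphi^{k-1}(x))<_{p}\varphi(\varphi^{k-1}(y))$ the $k=1$ case gives $(\varphi^{k-1}(x))^{\flat}\leq_{p}(\varphi^{k-1}(y))^{\flat}$; but ``invoking the inductive hypothesis on $x^{\flat},y^{\flat}$'' would require first knowing $\varphi^{k-1}(x^{\flat})<_{p}\varphi^{k-1}(y^{\flat})$, which you have not established, and would in any case conclude $(x^{\flat})^{\flat}\leq_{p}(y^{\flat})^{\flat}$ --- one flat too many. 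The missing content is precisely how to transfer prefix information about $\varphi^{k-1}(x)$ versus $\varphi^{k-1}(y)$ back to $x$ versus $y$. To salvage your approach you would need to strengthen the $k=1$ conclusion to the disjunction ``$x\leq_{p}y$, or $x=u\i$ with $u\o\leq_{p}y$'' and verify that each disjunct feeds back into the induction; that bookkeeping is exactly what the paper's contrapositive formulation avoids.
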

\begin{proof}
Suppose $x^{\flat}$ is not a prefix of $y^{\flat}$. Then $x = uat$ and $y=ubs$ with distinct letters $a, b$ and 
nonempty words $t,s$. Then one of $\varphi(at)$ and $\varphi(bs)$ starts with $\o\i$ and the other one with $\o\o$.
Thus $\varphi(x)$ is not a prefix of $\varphi(y)$. The rest follows by induction.
\end{proof}

\begin{lemma} \label{2013-04-14 01:50}
Let $x,y \in \terna^*$ and suppose that $\pi(x) <_{p} \pi(y)$. Then either 
$x^{\flat} <_{p} y^{\flat}$ or $x = u \o \i$ and $y = u \ka s$ for some $u \in \terna^{*}$
and  nonempty $s \in \terna^{+}$.
\end{lemma}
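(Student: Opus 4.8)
The plan is to induct on $\nabs{x}+\nabs{y}$ and peel off the first letters of $x$ and $y$, reducing to a comparison of $\pi$ of shorter words via the identity $\pi(u)=c(u_0)f\bparen{\pi(\talf u)}$ from Lemma~\ref{2013-04-10 17:21}. Write $x=au'$, $y=by'$ with $a,b\in\terna$ (handling the trivial base cases where $x=\epsilon$ or $y=\epsilon$ separately; note $\pi(x)<_p\pi(y)$ forces $x\ne\epsilon$, and if $y=\epsilon$ the hypothesis fails, so both are nonempty). Then $\pi(x)=c(a)\varphi\bparen{\pi(x^{\dagger})}$ and $\pi(y)=c(b)\varphi\bparen{\pi(y^{\dagger})}$ where I write $x^{\dagger}=\talf x$. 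The argument splits on whether $c(a)$ and $c(b)$ are prefix compatible.

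First, the case where $c(a)$ and $c(b)$ are \emph{not} prefix compatible. Since $c(\terna)=\{\o,\i,\o\i\}$, two of these being prefix incompatible means one begins with $\o$ and the other with $\i$. But $\pi(x)<_p\pi(y)$ means $\pi(x)$ and $\pi(y)$ agree on their common prefix, so $c(a)$ and $c(b)$ must agree on their first letter — contradiction. Hence $c(a),c(b)$ are always prefix compatible, and in fact (inspecting $\{\o,\i,\o\i\}$) either $c(a)=c(b)$, i.e.\ $a=b$, or $\{c(a),c(b)\}=\{\o,\o\i\}$, i.e.\ $\{a,b\}=\{\o,\ka\}$.

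\textbf{Case $a=b$.} Cancelling the common prefix $c(a)$ from both sides and using that $\varphi$ is injective on prefixes up to the discrepancy described in Lemma~\ref{2013-03-28 17:54}, I get $\varphi\bparen{\pi(x^{\dagger})}<_p\varphi\bparen{\pi(y^{\dagger})}$ or they are equal; if equal then $\pi(x^\dagger)=\pi(y^\dagger)$ and $x=y$ is impossible under $\pi(x)<_p\pi(y)$, so strict. Lemma~\ref{2013-03-28 17:54} (applied to $\pi(x^\dagger),\pi(y^\dagger)\in\bina^*$, after checking they are nonempty — if one is empty the bookkeeping is a short direct check) gives $\pi(x^\dagger)^\flat<_p\pi(y^\dagger)^\flat$, hence $\pi(x^\dagger)<_p\pi(y^\dagger)$ or $\pi(x^\dagger)=\pi(y^\dagger)w$ differ only in a last letter; in either subcase the inductive hypothesis applied to $x^\dagger,y^\dagger$ yields $(x^\dagger)^\flat<_p(y^\dagger)^\flat$ or the structured form $x^\dagger=v\o\i$, $y^\dagger=v\ka s$. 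Prepending $a$ back (recall $x^\flat = a(x^\dagger)^\flat$ when $\nabs{x}\ge 2$, with the one‑letter case handled directly) gives exactly the two alternatives in the statement.

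\textbf{Case $\{a,b\}=\{\o,\ka\}$}, the main obstacle. Here $c(\o)=\o$, $c(\ka)=\o\i$. Suppose $a=\o$, $b=\ka$ (the opposite orientation will be ruled out: if $a=\ka$, $b=\o$ then $\pi(x)$ starts $\o\i\dots$ and $\pi(y)$ starts $\o\dots$, and matching the second letter of $\pi(y)$ — which is the first letter of $\varphi(\pi(y^\dagger))$, hence $\o$ — against $\i$ is impossible, so this orientation cannot occur). With $a=\o$, $b=\ka$: $\pi(x)=\o\,\varphi\bparen{\pi(x^\dagger)}$ and $\pi(y)=\o\i\,\varphi\bparen{\pi(y^\dagger)}$. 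If $x^\dagger=\epsilon$, then $\pi(x)=\o$ and the condition $\pi(x)<_p\pi(y)$ holds automatically, giving $x=\o$, $y=\ka y^\dagger$, which is the structured alternative with $u=\epsilon$, $s=y^\dagger$ (nonempty since $\pi(x)<_p\pi(y)$ strictly forces $\nabs{\pi(y)}\ge 2$). If $x^\dagger\ne\epsilon$: cancelling $\o$, the condition becomes $\varphi\bparen{\pi(x^\dagger)}<_p\i\,\varphi\bparen{\pi(y^\dagger)}$; but $\varphi\bparen{\pi(x^\dagger)}$ begins with $c$ of its first letter's $\varphi$-image, which starts with $\o$ (every $\varphi$-image starts with $\o$), so it cannot be a prefix of a word starting with $\i$ unless it is empty — contradiction with $x^\dagger\ne\epsilon$. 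Hence in this case necessarily $x=\o=u\o\i$?? — no: the only surviving subcase is $x^\dagger=\epsilon$, i.e.\ $x=\o$. That is \emph{not} yet of the form $u\o\i$. So I must look one level up: this case can only arise when the letters $a,b$ are the \emph{first} point of divergence, and then $x$ ends right there, $x=p\o$ with $p$ the common prefix and $y=p\ka s$. But the statement wants $x=u\o\i$. The resolution: the structured alternative must actually be derived not at the very first divergence but by tracking that the prefix of $x$ \emph{before} this divergence, call it $u$, was processed identically, and the letter of $x$ causing divergence together with the fact that $\pi$ of the tail forces $x$'s divergence letter to be preceded consistently — so I will carry through the induction with the refined claim allowing the degenerate "$x=u\o$ with $y=u\ka s$" as a transient state that collapses to "$x=u'\o\i$" only when $u$ ends in the right letter, and verify against $c(\terna)$ that the only way to realize $\pi(x)<_p\pi(y)$ with this letter pattern is with $x$ ending in $\o\i$ (since an isolated final $\o$ with $\nabs{x}\ge 2$ forces the preceding letter, under $\pi$, to contribute an $\i$ immediately before, i.e.\ the preceding symbol of $x$ must be such that $c(\cdot)$ or $\varphi$ ends in $\i$, which in $\terna$ pins it down). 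I expect this bookkeeping — correctly identifying when the "$u\o$ vs $u\ka s$" transient is forced to upgrade to "$u\o\i$ vs $u\ka s$" — to be the delicate heart of the proof; everything else is the routine induction sketched above.
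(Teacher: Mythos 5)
Your induction does not close, and the gap sits exactly where the second alternative of the lemma comes from. In your case $a=b$ you strip $c(a)$ and $\varphi$ and then invoke Lemma~\ref{2013-03-28 17:54}; but that lemma only returns $\pi(\talf x)^{\flat} <_{p} \pi(\talf y)^{\flat}$, not $\pi(\talf x) <_{p} \pi(\talf y)$, so in the subcase you yourself flag (where the two images first disagree at the position of the last letter of $\pi(\talf x)$) the hypothesis of the inductive claim fails and there is nothing to apply. This subcase is not a removable nuisance: for $x=u\o\i$ and $y=u\ka s$ one computes $\pi(\o\i)=\o\o$, which is \emph{not} a prefix of $\pi(\ka s)=\o\i\cdots$; the prefix relation survives only after deleting last letters. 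Hence an induction that insists on maintaining the exact invariant $\pi(\cdot)<_{p}\pi(\cdot)$ can never reach --- and therefore never produce --- the $u\o\i$ alternative. Consistently with this, your case $\nsset{a,b}=\nsset{\o,\ka}$ only ever yields $x=\o$ at the divergence level, i.e.\ $x=u\o$ and $y=u\ka s$ globally; but there the lemma is not in danger, since (for nonempty $s$) $x^{\flat}=u<_{p}u\ka s^{\flat}=y^{\flat}$ and the \emph{first} alternative already holds, so no ``upgrading'' to $u\o\i$ is needed or possible. The closing discussion of ``transient states'' is in effect an acknowledgement that the actual mechanism producing the $u\o\i$ case is missing.

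The paper's proof avoids the induction on $S$-words entirely. Assuming $x^{\flat}\not<_{p}y^{\flat}$, it writes $x=uat$, $y=ubs$ at the first divergence of $x^{\flat}$ and $y^{\flat}$ (so $a\neq b$ and $t,s$ nonempty), cancels $\pi(u)$ via Lemma~\ref{2013-04-10 17:21}, and applies Lemma~\ref{2013-03-28 17:54} once with $k=\nabs{u}$ to obtain $c(a)\varphi\bparen{\pi(t)}^{\flat}<_{p}c(b)\varphi\bparen{\pi(s)}^{\flat}$. A direct inspection of $c(\terna)=\nsset{\o,\i,\o\i}$, using that every $\varphi$-image of a letter begins with $\o$, then forces $a=\o$, $b=\ka$, and $\varphi\bparen{\pi(t)}^{\flat}=\epsilon$, whence $t=\i$ and $x=u\o\i$, $y=u\ka s$. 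To salvage your approach you would have to strengthen the inductive statement so that its hypothesis is the $\flat$-weakened prefix relation; as written, the proposal does not prove the lemma.
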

\begin{proof}
Suppose $x^{\flat}$ is not a prefix of~$y^{\flat}$. 
Then $x = uat$ and $y=ubs$ with distinct letters $a, b \in \terna$ and  nonempty words $t,s \in \terna^{+}$.
Lemma~\ref{2013-04-10 17:21} applied to finite words gives
\[
\pi(x) = \pi(u) \varphi^{\nabs{u}}\bparen{\pi(at)} 
\qqtext{and}
e(y) = \pi(u) \varphi^{\nabs{u}}\bparen{\pi(bs)} 
\]
Thus $\pi(x) <_{p} \pi(y)$ implies 
$\varphi^{\nabs{u}}\bparen{\pi(at)}<_{p} \varphi^{\nabs{u}}\bparen{\pi(bs)}$,
so that by Lemma~\ref{2013-03-28 17:54}, we have $\pi(at)^{\flat} <_{p}  \pi(bs)^{\flat}$, or
\[
c(a) \varphi(\pi(t))^{\flat} <_{p} c(b) \varphi(\pi(s))^{\flat}.
\]
Since $a\neq b$, it follows that $a = \o$, $b = \ka$, and $\varphi(\pi(t))^{\flat} = \epsilon$.
The last identity implies $t = \i$; therefore $x = u \o \i$ and $y = u \ka s$. 
\end{proof}

\begin{lemma} \label{2013-03-28 15:43}
We have $d_{3} = \pi(\o\i) \o$ and $d_{4} = \pi(\i\o) \o\i$.
For all $n\geq0$, we have
\[
\pi(\o \ka \i^{2n}\ka) =  d_{2n+5}\i  \qqtext{and}
\pi(\i\ka \i^{2n+1}\ka) =  d_{2n + 6} \o
\]
\end{lemma}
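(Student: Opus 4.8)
The plan is to verify each of the four claimed identities by direct computation using the definition of $\pi$, the formula $\pi(xy) = \pi(x)\varphi^{\nabs{x}}(\pi(y))$ from Lemma~\ref{2013-04-10 17:21}, the bispecial-factor recursions in~\eqref{2013-04-14 02:13}, and the identity $\varphi(p_n)\o = p_{n+1}$. The two base cases are immediate: since $c(\o)=\o$, $c(\i)=\i$, $c(\ka)=\o\i$, we have $\pi(\o\i) = \o\varphi(\i) = \o\o$, so $\pi(\o\i)\o = \o\o\o = \o p_3 \o = d_3$; and $\pi(\i\o) = \i\varphi(\o) = \i\o\i$, so $\pi(\i\o)\o\i = \i\o\i\o\i = \i p_4 \i = d_4$ (using $p_3=\o$, $p_4=\o\i\o$). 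I would present these first to anchor the pattern, noting that they show the ``$\o$'' or ``$\o\i$'' being appended on the right is exactly what is needed to turn a $\pi$-expansion into a $d_n$.

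For the general identities I would compute $\pi(\o\ka\i^{2n}\ka)$ and $\pi(\i\ka\i^{2n+1}\ka)$ explicitly. Using Lemma~\ref{2013-04-10 17:21},
\[
\pi(\o\ka\i^{2n}\ka) = \o\,\varphi(\o\i)\,\varphi^{2}(\pi(\i^{2n}\ka)) = \o\,\o\i\o\,\varphi^{2}(\pi(\i^{2n})\varphi^{2n}(\o\i)).
\]
Since $c(\i)=\i$ is a letter, $\pi(\i^{2n}) = \i\varphi(\i)\cdots\varphi^{2n-1}(\i) = \i\,\o\,\o\i\,\cdots$, i.e. a telescoping product; the cleanest route is to observe $\pi(\i^{m}) = \varphi^{m}(\o)^{\flat}\cdot(\text{last letter adjustment})$, or more directly to use the relation $\o f_n = \varphi^{?}$-type identities. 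Concretely, I expect the key recursive identity to be something like $\o\,\pi(\i^{m}) = f_{m+1}$ or a small variant, which can be proved by a one-line induction from $\varphi(f_k) = f_{k+1}$ together with $c(\i)=\i$, $\varphi(\i)=\o=f_1$. Granting such a normal form, the whole word $\pi(\o\ka\i^{2n}\ka)$ collapses to an expression of the form $\varphi^{k}(\o\i\o\cdots)$ which, via $f_n f_{n-1} = p_{n+1}ab$ and $\varphi(p_n)\o = p_{n+1}$, I can match to $\o p_{2n+5}\o\,\i = d_{2n+5}\i$ (for the first) and $\i p_{2n+6}\i\,\o = d_{2n+6}\o$ (for the second), tracking parities carefully since $d_m$ uses $\o p_m\o$ for odd $m$ and $\i p_m\i$ for even $m$. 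An alternative, possibly smoother, route is induction on $n$: show that passing from the index-$n$ identity to the index-$(n+1)$ identity corresponds to inserting $\i\i$ in the middle of the $S$-word and applying $\varphi$ appropriately, using $\varphi^{2}(p_m)\,(\text{short suffix}) = p_{m+2}$ (the square of $\varphi(p_m)\o = p_{m+1}$).

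The main obstacle will be bookkeeping: getting the right power of $\varphi$ in front of each block, correctly resolving the trailing letters (the stray $\i$ or $\o$ on the right-hand sides come precisely from the mismatch between $f_n f_{n-1}$ ending in $ab$ versus $p_{n+1}$ needing a two-letter tail, and from the first letter of $c(s_0)$), and keeping the even/odd parities of the $d$-subscripts aligned with the $\i\,p\,\i$ versus $\o\,p\,\o$ cases. None of the steps is deep — everything reduces to the two displayed properties of the $p_n$ plus $\varphi(f_k)=f_{k+1}$ — but the parity/indexing has to be done scrupulously, so I would set up the induction so that the two identities are proved simultaneously, each feeding the other at the next level. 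I expect the induction step to occupy most of the write-up, with the base cases $n=0$ (giving $\pi(\o\ka\ka)$ and $\pi(\i\ka\i\ka)$) checked by hand against $d_5 = \i\o\i\o\i$ and $d_6$ to confirm the normalization.
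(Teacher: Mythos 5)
Your plan is correct and follows essentially the same route as the paper: both expand $\pi(\o\ka\i^{2n}\ka)$ and $\pi(\i\ka\i^{2n+1}\ka)$ as telescoping products of the blocks $\varphi^{k}(\i) = f_{k}$, collapse them via $f_{n+1} = f_{n}f_{n-1}$ (your guessed identity $\o\,\pi(\i^{m}) = f_{m+1}$ is exactly the right normal form and is correct), and finish with $f_{n}f_{n-1} = p_{n+1}ab$ to read off $d_{2n+5}\i$ and $d_{2n+6}\o$. The only slip is the parenthetical alternative $\pi(\i^{m}) = \varphi^{m}(\o)^{\flat}$ --- it is the first letter of $f_{m+1}$, not the last, that must be stripped --- but since you also state the correct identity this does not affect the argument.
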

\begin{proof}
Recalling that $f_{k} f_{k+1} = p_{k+2}ab$ with $ab \in \{\o\i, \i\o\}$ for all $k\geq1$, we get
\begin{align*}
\pi(\o \ka \i^{2n} \ka) &= \o \varphi(\o\i) \varphi^{2}(\i) \varphi^{3}(\i) \cdots \varphi^{2n}(\i)\varphi^{2n + 1}(\i) \varphi^{2n+2}(\o\i)  \\
&= \o \varphi^{2}(\o) \varphi^{1}(\o) \varphi^{2}(\o) \cdots \varphi^{2n-1}(\o)\varphi^{2n}(\o) \varphi^{2n+3}(\o)  \\
&= \o f_{3} f_{2} f_{3} \cdots f_{2n} f_{2n+1}  f_{2n+4}  \\
&= \o f_{2n+3} f_{2n+4} = \o p_{2n+5} \o \i = d_{2n+5} \i.
\end{align*}
Similarly,
\begin{align*}
\pi(\i \ka \i^{2n+1} \ka) &= \i \varphi(\o\i) \varphi^{2}(\i) \varphi^{3}(\i) \cdots \varphi^{2n}(\i)\varphi^{2n + 1}(\i) \varphi^{2n+2}(\i)  \varphi^{2n+3}(\o\i)  \\
&= \i \varphi^{2}(\o) \varphi^{1}(\o) \varphi^{2}(\o) \cdots  \varphi^{2n-1}(\o) \varphi^{2n}(\o)\varphi^{2n+1}(\o) \varphi^{2n+4}(\o)  \\
&= \i f_{3} f_{2} f_{3} \cdots f_{2n+1} f_{2n+2}  f_{2n+5}  \\
&= \i f_{2n+4} f_{2n+5} = \i p_{2n+6} \i \o = d_{2n+6} \o.
\end{align*}
\end{proof}

\begin{lemma}\label{2013-04-14 01:49}
The forbidden word $d_{2} = \i\i$ does not have covers.
The minimal covers of $d_{3}$ are the words in $\o\i(\o+\i+\ka)$.
The minimal covers of $d_{4}$ are the words in $(\i + \ka)\o(\o+\i+\ka)$.
%The minimal covers of $d_{5}$ are the words in $\o\ka\o(\o+\i+\ka)$ and $\o\ka\ka$.
%The minimal covers of $d_{6}$ are the words in $\i\ka\i\o(\o+\i+\ka)$ and $\i\ka\i\ka$.
%The minimal covers of $d_{7}$ are the words in $\o\ka \i\i\o(\o+\i+\ka)$,
%$\o\ka\i\i\ka$.
For other forbidden words, we have the following. Let $n\geq0$.
\begin{enumerate}[(i)]
\item The minimal covers of $d_{2n+5}$ are 
\begin{equation}\label{2013-04-14 10:44}
\o\ka\i^{2n}\bparen{\ka + \o\o + \o\i + \o\ka}.
\end{equation}
\item The minimal covers of $d_{2n+6}$ are
\[
(\i + \ka)\ka\i^{2n+1}\bparen{\ka + \o\o + \o\i + \o\ka}.
\]
\end{enumerate}
\end{lemma}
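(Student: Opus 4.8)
The plan is to prove the classification by working directly with the explicit formulas for the minimal forbidden words $d_n$ in terms of the words $\pi(\o\ka\i^{2n}\ka)$ and $\pi(\i\ka\i^{2n+1}\ka)$ established in Lemma~\ref{2013-03-28 15:43}, together with the definition of a minimal cover: $u$ is a cover of $v$ if $v \subset \pi(u)$, and it is minimal if moreover $v \not\subset \pi(u^{\flat})$ and $v \not\subset f\bparen{\pi(\talf u)}$. First I would dispose of the small cases. For $d_2 = \i\i$: since $\i\i$ is not a factor of $\pi(s)$ for any single letter $s \in \terna$, and more generally $\i\i$ is never an internal factor of any $\pi(u)$ because each image block $c(s_i)$ read consecutively produces no $\i\i$ (a $\i$ can only be followed by the start of the next block $c(s_{i+1})$, which begins with $\o$), $d_2$ has no cover at all. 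For $d_3 = \o\o\o$ and $d_4 = \o\i\o\i\o$: use Lemma~\ref{2013-03-28 15:43}, which gives $d_3 = \pi(\o\i)\o$ and $d_4 = \pi(\i\o)\o\i$. Since $\pi(\o\i)$ alone does not contain $\o\o\o$ but $\pi(\o\i s)$ does for every $s$ (because $c(s)$ begins with $\o$ except... here one checks all three choices $c(\o)=\o$, $c(\i)=\i$, $c(\ka)=\o\i$, and in each the third letter after $\pi(\o\i)=\o\i\o$ is $\o$ — wait, $\pi(\o\i)=\o\i\o$, then appending $\varphi^2(c(s))$ gives the continuation; for $s=\i$ we get $\o\i\o\o$, yielding $\o\o$ but not yet $\o\o\o$, so one must be careful). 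I would verify directly that the minimal covers of $d_3$ are exactly the three words $\o\i\o$, $\o\i\i$, $\o\i\ka$ and of $d_4$ the words $(\i+\ka)\o(\o+\i+\ka)$, checking the two minimality conditions by hand using $\pi(u^{\flat})$ and $\pi(\talf u)$.

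For the main cases (i) and (ii), I would argue as follows. Fix $n \geq 0$ and consider $d_{2n+5}$. Lemma~\ref{2013-03-28 15:43} gives $\pi(\o\ka\i^{2n}\ka) = d_{2n+5}\i$, so $d_{2n+5}$ is a prefix of $\pi(\o\ka\i^{2n}\ka)$ and hence a cover of $d_{2n+5}$ is obtained by taking $\o\ka\i^{2n}$ followed by any single letter whose $\pi$-image is long enough to contain the rest of $d_{2n+5}$ — but the point of the candidate set $\o\ka\i^{2n}\bparen{\ka + \o\o + \o\i + \o\ka}$ is precisely that the last one or two letters must supply exactly the missing suffix of $d_{2n+5}$ (which ends in $\o$, since $d_{2n+5} = \o p_{2n+5}\o$). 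The key structural input is Lemma~\ref{2013-04-14 01:50}: if $\pi(x) <_p \pi(y)$ then, up to the single exceptional pair $(u\o\i, u\ka s)$, we have $x^{\flat} <_p y^{\flat}$. This is the tool that lets me argue that any cover $u$ of $d_{2n+5}$ with $d_{2n+5}$ beginning inside $\pi(u)$ at position $0$ must, after stripping, agree with $\o\ka\i^{2n}$ on a long prefix; combined with the length bookkeeping from $|\varphi^k(\o)| = f_{k+1}$ (Fibonacci numbers) this pins down that the initial segment of $u$ is forced to be $\o\ka\i^{2n}$ and the tail must be one of the four listed options. I would then handle separately the case where $d_{2n+5}$ does not start at position $0$ of $\pi(u)$: here the minimality condition $v \not\subset f\bparen{\pi(\talf u)}$ is what forces $u$ to begin with the right letter, because if $d_{2n+5}$ occurs strictly inside $\varphi(\pi(\talf u)) = f\bparen{\pi(\talf u)}$ shifted, we could pull it back to a shorter forbidden word, contradicting minimality of the cover (this is the same pull-back mechanism used in Lemma~\ref{2013-04-11 12:48}). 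The case $d_{2n+6}$ is entirely parallel, using $\pi(\i\ka\i^{2n+1}\ka) = d_{2n+6}\o$ and the fact that $d_{2n+6} = \i p_{2n+6}\i$ begins and ends with $\i$, which is why the first letter may be either $\i$ or $\ka$ (both $c(\i)=\i$ and $c(\ka)=\o\i$ end in $\i$... no — $d_{2n+6}$ \emph{begins} with $\i$, and $c(\ka)=\o\i$ has $\i$ as its second letter, so a cover may start with $\ka$ provided the occurrence of $d_{2n+6}$ starts at the second position of that first block; this is exactly where the $(\i+\ka)$ alternation comes from and must be checked against both minimality conditions).

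The main obstacle I anticipate is the bookkeeping in the forced-prefix argument: showing rigorously that a minimal cover cannot be \emph{longer} than the listed words (i.e., that the tail is at most two letters and must be one of the four specified options $\ka$, $\o\o$, $\o\i$, $\o\ka$). This requires knowing precisely how much of $d_{2n+5}$ is still missing after $\pi(\o\ka\i^{2n})$ — one computes $\pi(\o\ka\i^{2n}) = \o p_{2n+3}$ or similar from the telescoping in Lemma~\ref{2013-03-28 15:43} — and then checking that $\varphi^{2n+2}(c(s))$ for each of the three letters $s$ either overshoots (giving a non-minimal cover, excluded) or undershoots (requiring a further letter), and in the undershoot case exactly which two-letter tails $c(s)c(s')$ complete $d_{2n+5}$ without overshooting into $\pi(u^{\flat})$. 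The four options $\bparen{\ka + \o\o + \o\i + \o\ka}$ presumably arise as: $\ka$ (where $c(\ka)=\o\i$ is long enough on its own), versus $\o$ followed by one of the three letters (where $c(\o)=\o$ is too short and needs a companion), and the absence of $\i$ as a standalone or of $\o\i$... — pinning down precisely this case split, and verifying minimality condition $v \not\subset \pi(u^{\flat})$ rules out the one-letter-too-many variants, is the delicate part. Everything else is a routine application of Lemmas~\ref{2013-03-28 17:54}, \ref{2013-04-14 01:50}, and~\ref{2013-03-28 15:43} together with the Fibonacci-length identities in~\eqref{2013-04-14 02:13}.
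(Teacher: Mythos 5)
Your proposal is correct and follows essentially the same route as the paper: it rests on the explicit formulas of Lemma~\ref{2013-03-28 15:43}, uses the minimality condition $v \not\subset f\bparen{\pi(\talf u)}$ together with the first letter of $d_n$ to pin the occurrence to position $0$ (resp.\ to force the first letter into $\nsset{\i,\ka}$ for $d_{2n+6}$), invokes Lemma~\ref{2013-04-14 01:50} to force the prefix $\o\ka\i^{2n}$, and finishes with the tail case analysis. The only blemish is the computational slip $\pi(\o\i)=\o\i\o$ (in the Section~3 coding $\pi(\o\i)=\o\o$), which you catch yourself and which does not affect the stated classification since you defer the small cases to direct verification, exactly as the paper does.
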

\begin{proof}
We leave verifying the claims on $d_{2}$, $d_{3}$, and $d_{4}$ to the reader.
The displayed words are minimal covers because they are obtained from the clearly minimal words in
Lemma~\ref{2013-03-28 15:43} by modifying the first and the last two letters in obvious ways.

To prove that this collection is exhaustive, suppose that $u$ is a minimal cover of $d_{2n+5}$. Then $d_{2n + 5} \subset \pi(u)$, and since $d_{2n+5}$ is not a factor of $\varphi\bparen{\pi(\talf u)}$,
it follows that $u$ can be written as $u=ax$ with $a\in \terna$ such that $d_{2n+5} \leq_{p} \pi(bx)$ for some $b \in \terna$. Noticing that $d_{2n+5}$ starts with with $\o\o$, we actually must have $a = b = \o$, and so
$d_{2n+5} \leq_{p} \pi(u)$. Lemma~\ref{2013-03-28 15:43} says that then $\pi(\o \ka \i^{2n}\ka) <_{p} \pi(u)$, so that
$\o \ka \i^{2n} <_{p} u^{\flat}$ by Lemma~\ref{2013-04-14 01:50}. It is readily verified using~\eqref{2013-04-14 02:13} that 
$\pi(\o \ka \i^{2n}\i)$ is not prefix compatible with $d_{2n + 5}$, and thus either $\o \ka \i^{2n}\o \leq_{p} u$
or $\o \ka \i^{2n}\ka \leq_{p} u$. This observation and the minimality of $u$ show that the words in~\eqref{2013-04-14 10:44} are exactly all the minimal covers of $d_{2n+5}$.

The case for $d_{2n+6}$ can be handled in the same way, the only difference being that since $d_{2n+6}$ starts with $\i\o$,
the letters $a$ and $b$ may differ, but then $\nsset{a,b} = \nsset{\i, \ka}$.
\end{proof}

\begin{theorem}\label{2013-04-05 16:26}
The language $\calL(\calS_{\varphi})$ of the suffix conjugate $(\calS_{\varphi}, H_{\varphi})$ 
of the Fibonacci subshift $(\calX_{\varphi}, T)$ is regular.
An infinite word $\bfs \in S^{\N}$ is in $\calS_{\varphi}$ if and only if it is the label of an infinite walk on
the graph depicted in Fig.~\ref{2013-04-04 14:20}.
The mapping $H_{\varphi} \colon \calS_{\varphi} \rightarrow \calS_{\varphi}$ is given by 
\[
H_{\varphi}(\bfs) =
\begin{cases}
\i \bfz & \text{if $\bfs = \ka\bfz$;}\\
\lambda(x \ka) \bfz & \text{if $\bfs = ax\ka\bfz$ with $a\in\bina$, $x\in \bina^{*}$;} \\
\lambda(\bfz) & \text{if $\bfs = a\bfz$ with $a\in \bina$ and $\bfz\in \bina^{\N}$,}
\end{cases}
\]
where $\lambda$ is the morphism given by $\lambda(\i) = \o$, $\lambda(\o) = \ka$, and $\lambda(\ka) = \ka \i$.
\end{theorem}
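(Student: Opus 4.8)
The statement has three parts: (a) regularity of $\calL(\calS_\varphi)$, (b) the explicit automaton description of $\calS_\varphi$, and (c) the formula for $H_\varphi$. For (a) and (b) the plan is to invoke Theorem~\ref{2013-04-14 11:13}: since $\varphi\in\calN$, it suffices to show that the set $\calC_\varphi$ of minimal covers of minimal forbidden words is a regular language, and Lemma~\ref{2013-04-14 01:49} gives $\calC_\varphi$ explicitly as a finite union of words plus three infinite families, each described by a regular expression of the form $w_1 \i^{*} w_2$ (e.g.\ $\o\ka\i^{2n}(\ka+\o\o+\o\i+\o\ka)$). Because each family is regular and there are finitely many of them, $\calC_\varphi$ is regular. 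Then $\calL(\calS_\varphi) = S^{*}\setminus S^{*}\calC_\varphi S^{*}$ is regular, and by the construction in the proof of Theorem~\ref{2013-04-14 11:13} (build the minimal DFA for the complement, prune states without arbitrarily long walks to accepting states) one obtains a finite graph whose infinite-walk labels are exactly $\calS_\varphi$; this is the graph drawn in Fig.~\ref{2013-04-04 14:20}. So for (b) I would only need to check that the displayed graph is indeed (a presentation equivalent to) the one produced by this construction, i.e.\ that its finite-path labels are precisely the words avoiding all factors in $\calC_\varphi$ and that every vertex lies on arbitrarily long paths to accepting states — a finite, if tedious, verification matching edges against the forbidden patterns of Lemma~\ref{2013-04-14 01:49}.

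For part (c), the plan is to specialize the general formula $H=T\circ G$ from Theorem~\ref{2013-04-12 22:44}, with $G$ given by~\eqref{2013-04-13 21:25}, to the Fibonacci data $S=\terna$, $c(\o)=\o$, $c(\i)=\i$, $c(\ka)=\o\i$, so $S_1=\{\o,\i\}$ and $S\setminus S_1=\{\ka\}$. First compute $\lambda$ on $S$: for the letters of $S_1$, $\lambda(\i)=c^{-1}(\varphi(\i))=c^{-1}(\o)=\o$ and $\lambda(\o)=c^{-1}(\varphi(\o))=c^{-1}(\o\i)=\ka$; for $\ka$, since $\varphi(c(\ka))=\varphi(\o\i)=\o\i\o=au$ with $a=\o$, $u=\i\o$, definition~\eqref{2013-04-14 16:13} gives $\lambda(\ka)=c^{-1}(\varphi(\o))\,c^{-1}(\i\o)$; but $\i\o$ is not a suffix of $\varphi(\o)=\o\i$ nor of $\varphi(\i)=\o$, so $c^{-1}(\i\o)$ is not defined in the two-letter setup — here one must instead note that in Theorem~\ref{2013-04-05 16:26} the relevant decomposition uses $\varphi(\o\i)=\o\,\i\o$ read so that $a=\o$ and the remaining suffix $\i\o$ is $c(\i)c(\o)$; rechecking, $\varphi(\o)\o = \o\i\o$ has suffix $\i\o=\varphi(\i)\varphi(\o)$... the clean statement is simply that $\lambda(\ka)=\ka\i$ as declared, which one verifies directly by $c(\ka)c(\i)=\o\i\,\i$? — no. The correct check is $\pi(\lambda(\ka))=\varphi(c(\ka))$, i.e.\ $\pi(\ka\i)=c(\ka)\varphi(c(\i))=\o\i\cdot\varphi(\i)=\o\i\o=\varphi(\o\i)=\varphi(c(\ka))$, which indeed holds; so $\lambda(\ka)=\ka\i$ is the right value (matching~\eqref{2013-04-13 21:25} read through the identity $c(\lambda(s))=f(c(s))$ suitably reinterpreted). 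Hence $\lambda$ is exactly the morphism $\lambda(\i)=\o,\ \lambda(\o)=\ka,\ \lambda(\ka)=\ka\i$ of the theorem.

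With $\lambda$ pinned down, substitute into $G$ and then apply $T$. If $\bfs\in S_1^{\N}=\bina^{\N}$, then $G(\bfs)=\lambda(\bfs)$; and one checks $T\circ\lambda(\bfs)=\lambda(T\bfs)$ when the first letter's image has length $1$ (here $\lambda(\o)=\ka$ and $\lambda(\i)=\o$ both have length $1$), giving $H_\varphi(\bfs)=\lambda(T\bfs)$ — but writing $\bfs=a\bfz$ this is $\lambda(\bfz)$, the third case. If $\bfs=ax\ka\bfz$ with $a\in\bina$, $x\in\bina^{*}$, then $p=ax\in S_1^{*}$, the first element of $S\setminus S_1$ is the displayed $\ka$, and $G(\bfs)=\lambda(ax\ka)\bfz$; since $\lambda(a)$ has length $1$ (it is $\ka$ or $\o$), $T$ strips that one letter, leaving $\lambda(x\ka)\bfz$, the second case. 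If $\bfs=\ka\bfz$, then $p=\epsilon$, $G(\bfs)=\lambda(\ka)\bfz=\ka\i\bfz$, and $T$ removes the leading $\ka$, giving $\i\bfz$, the first case. This matches the theorem. \textbf{Main obstacle.} The conceptually interesting work is entirely in part (b): verifying that the concrete graph of Fig.~\ref{2013-04-04 14:20} recognizes precisely $S^{*}\setminus S^{*}\calC_\varphi S^{*}$ and has been correctly pruned (no sink-free dead ends). This is a finite check but error-prone because the forbidden families $\o\ka\i^{2n}(\cdots)$ and $(\i+\ka)\ka\i^{2n+1}(\cdots)$ have parity-dependent lengths, so the automaton must track the parity of runs of $\i$'s following a $\ka$; getting the parities and the boundary cases $d_2,d_3,d_4$ right is where care is needed. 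Parts (a) and (c) are then essentially bookkeeping on top of the general theorems already proved.
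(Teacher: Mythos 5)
Your proposal follows the paper's proof essentially verbatim: regularity of $\calC_{\varphi}$ via Lemma~\ref{2013-04-14 01:49} feeding into Theorem~\ref{2013-04-14 11:13}, construction and pruning of the minimal DFA for $S^{*}\setminus S^{*}\calC_{\varphi}S^{*}$ to obtain Fig.~\ref{2013-04-04 14:20}, and computation of $H_{\varphi}=T\circ G$ from Eq.~\eqref{2013-04-13 21:25}. Your momentary difficulty with $\lambda(\ka)$ traces to a typo in Eq.~\eqref{2013-04-14 16:13} (the decomposition should read $c(s)=au$ rather than $f\bparen{c(s)}=au$, as in Lemma~\ref{2013-03-20T14:46}), under which $\lambda(\ka)=c^{-1}\bparen{\varphi(\o)}c^{-1}(\i)=\ka\i$ falls out directly, confirming the value you reached by checking $\pi(\ka\i)=\varphi\bparen{c(\ka)}$.
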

\begin{proof}
Lemma~\ref{2013-04-14 01:49} says that the set $\calC_{\varphi}$ of all minimal covers of minimal forbidden words of $\bff$
is regular. Thus Theorem~\ref{2013-04-14 11:13} tells us that $\calL(\calS_{\varphi})$ is regular.
Following the proof of that theorem, we first construct
the minimal  deterministic automaton%
\footnote{This automaton as well as the one in the proof Theorem~\ref{2013-04-14 18:14} was computed using Petri Salmela's FAFLA Python package for finite automata and formal languages. \url{http://coyote.dy.fi/~pesasa/fafla/}} % 
accepting the language $S^{*}\setminus S^{*}\calC_{\varphi} S^{*}$ and then remove the states and edges
that cannot be on the path of an infinite walk through accepting states. The result is given in Fig.~\ref{2013-04-05 16:25}.
Notice that the label of each walk starting from state $q_{0}$ can be obtained from a walk starting from states 
$q_{1}$, $q_{3}$, or $q_{4}$. The state $q_{2}$ is superfluous for the same reason. The removal of states $q_{0}$ and $q_{2}$ and the corresponding edges yields in the graph in Fig.~\ref{2013-04-04 14:20}.

Using Eq.~\eqref{2013-04-13 21:25} for constructing the mapping $G$ and then
recalling the definition $H = T\circ G$,  the given formula for $H_{\varphi}$ is readily verified.
This completes the proof.
\end{proof}

\begin{figure}
\begin{center}
\subfloat[%
An NFA accepting the language of $\calL(\calS_{\varphi})$.%
] % provided by subfig-package
{
\label{2013-04-05 16:25}
\begin{tikzpicture}[->, >=stealth', shorten >=1pt, auto, node distance=2.3cm,%                                             
semithick, initial/.style={initial by arrow, initial text=, initial where = above}]
\tikzstyle{every state}=[semithick]
\node[state, initial, double, inner sep=1pt, minimum size=12pt] (q0)              {$q_{0}$};
\node[state, double, inner sep=1pt, minimum size=12pt] (q1) [below left of = q0] {$q_{1}$};
\node[state, double, inner sep=1pt, minimum size=12pt] (q2) [below right of = q0] {$q_{2}$};
\node[state, double, inner sep=1pt, minimum size=12pt] (q3) [below of = q1] {$q_{3}$};
\node[state, double, inner sep=1pt, minimum size=12pt] (q4) [below of = q2] {$q_{4}$};
\path (q0) edge node [left] {$\o$}  (q1)
      (q0) edge [right] node {$\i, \ka$} (q2)
      (q1) edge [out = 210, in = 150, loop] node {$\o$} (q1)
      (q2) edge [out = 30, in = -30, loop] node {$\i$} (q2)
      (q2) edge [right] node {$\ka$} (q4)      
      (q4) edge [bend left] node {$\i$} (q3)
      (q3) edge [bend left] node {$\i$} (q4)
      (q4) edge [out = 30, in = -30, loop] node {$\ka$} (q4)
      (q1) edge [left] node {$\ka$} (q3);
\end{tikzpicture}
}
\,
\subfloat[A graph for the sequences in $\calS_{\varphi}$.]
{
\label{2013-04-04 14:20}
\begin{tikzpicture}[->, >=stealth', shorten >=1pt, auto, node distance=2.3cm,%                                             
semithick, initial/.style={initial by arrow, initial text=}]
\tikzstyle{every state}=[semithick]

\node[state, inner sep=1pt, minimum size=12pt] (q0)                 {$q_{1}$};
\node[state, inner sep=1pt, minimum size=12pt] (q1) [right of = q0] {$q_{3}$};
\node[state, inner sep=1pt, minimum size=12pt] (q2) [right of = q1] {$q_{4}$};
\path (q0) edge [bend left] node {$\ka$}  (q1)
      (q0) edge [out = 120, in = 60, loop] node {$\o$} (q0)
      (q1) edge [bend left] node {$\i$} (q2)
      (q2) edge [out = 120, in = 60, loop] node {$\ka$} (q2)
      (q2) edge [bend left] node {$\i$} (q1);
\end{tikzpicture}
}
\end{center}
\caption{The suffix conjugate of the Fibonacci subshift.}
\end{figure}
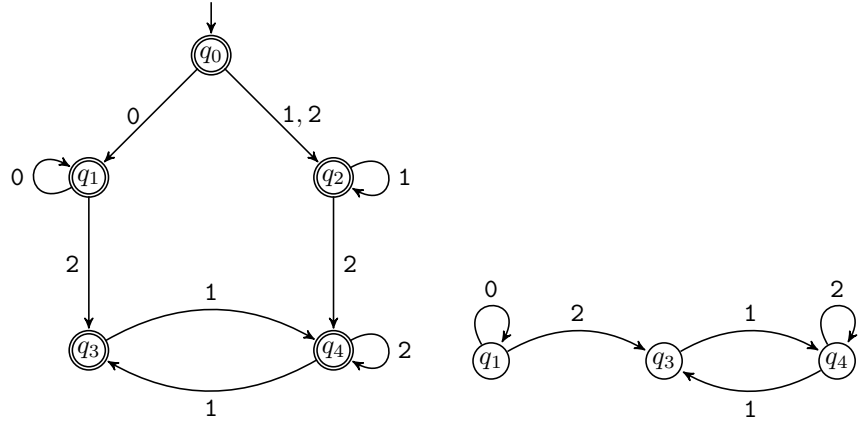

Our last goal for this section is to prove Theorem~\ref{2013-04-06 13:09}.
To that end, let us first state a well-known property of the Fibonacci subshift.

\begin{lemma}\label{2013-07-18 18:28}
If $\bfz$ has two $T$-preimages in $\calX_{\varphi}$, then $\bfz = \bff$.
\end{lemma}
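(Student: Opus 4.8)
The plan is to first restate the hypothesis in concrete terms: since the alphabet is $\bina$, the word $\bfz$ has two $T$-preimages in $\calX_\varphi$ exactly when both $\o\bfz\in\calX_\varphi$ and $\i\bfz\in\calX_\varphi$; equivalently, every prefix of $\bfz$ is left-special in $\calL(\calX_\varphi)$. The goal is then to show that such a $\bfz$ is ``$\varphi$-nested'': $\bfz=\varphi(\bfy)$ for some $\bfy$ that again has two $T$-preimages, and to iterate this.

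The key step is to feed $\o\bfz$ and $\i\bfz$ into Lemma~\ref{2013-04-10 20:38}. Recall that the nonempty suffixes of $\varphi(\o)=\o\i$ are $\i$ and $\o\i$, that $\varphi(\i)=\o$ has only the suffix $\o$, and that $\varphi(a)$ begins with $\o$ for every $a\in\bina$. Applying the lemma to $\i\bfz$, the suffix part must begin with $\i$, hence equals $\i$, which is a suffix only of $\varphi(\o)$; this yields $\bfz=\varphi(\bfy')$ with $\o\bfy'\in\calX_\varphi$, and in particular $\bfz$ begins with $\o$. Applying the lemma to $\o\bfz$, the suffix part begins with $\o$; it cannot be $\o\i$ (that would force $\bfz$ to begin with $\i$), so it is $\o$, which is a suffix only of $\varphi(\i)$, giving $\bfz=\varphi(\bfy)$ with $\i\bfy\in\calX_\varphi$. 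Since $\varphi$ is injective on $\calX_\varphi$ — this follows from its circularity together with the suffix-code property, exactly as in the proof of Lemma~\ref{2013-04-10 20:39} — we conclude $\bfy=\bfy'$, so $\bfy$ satisfies $\o\bfy\in\calX_\varphi$ and $\i\bfy\in\calX_\varphi$; that is, $\bfy$ has two $T$-preimages.

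Iterating the previous step gives, for every $n\geq0$, a nonempty infinite word $\bfy_n\in\calX_\varphi$ with $\bfz=\varphi^{n}(\bfy_n)$. Hence $\bfz$ begins with $\varphi^{n}(\o)$ or with $\varphi^{n}(\i)=\varphi^{n-1}(\o)$, and both of these are prefixes of $\bff=\varphi^{\omega}(\o)$; since each letter is growing, their lengths tend to infinity with $n$. Therefore $\bfz$ agrees with $\bff$ on arbitrarily long prefixes, and so $\bfz=\bff$.

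I expect the only delicate point to be the short case analysis on the two suffix decompositions, namely keeping straight which suffix in $S'=\nsset{\o,\i,\o\i}$ forces which preceding letter; the remainder is bookkeeping. If one prefers to avoid Lemma~\ref{2013-04-10 20:38}, there is an alternative route: the factor complexity of $\bff$ is $n+1$, so there is exactly one left-special factor of each length; a word has two $T$-preimages iff all its prefixes are left-special, which determines it uniquely, and $\bff$ qualifies because its prefixes are prefixes of the bispecial words $p_n$, which converge to $\bff$.
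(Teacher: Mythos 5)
Your argument is correct, but it takes a genuinely different route from the paper. The paper's proof is a two-line citation: $\o\bfz,\i\bfz\in\calX_{\varphi}$ means every prefix of $\bfz$ is a left special factor of $\bff$, and the classical fact that the Fibonacci word has factor complexity $n+1$ (hence a unique left special factor of each length, namely the prefix of $\bff$) finishes it --- this is exactly the ``alternative route'' you sketch in your last sentence. Your main argument instead desubstitutes: applying Lemma~\ref{2013-04-10 20:38} to $\i\bfz$ forces the suffix to be $\i$ (so $a=\o$ and $\bfz=\varphi(\bfy')$ with $\o\bfy'\in\calX_{\varphi}$, and $\bfz$ starts with $\o$), applying it to $\o\bfz$ then rules out the suffix $\o\i$ and forces $\o$ (so $a=\i$ and $\bfz=\varphi(\bfy)$ with $\i\bfy\in\calX_{\varphi}$), and injectivity of $\varphi$ on $\calX_{\varphi}$ --- which the paper does make available in the proof of Lemma~\ref{2013-04-10 20:39} --- gives $\bfy=\bfy'$, so the hypothesis propagates and $\bfz=\varphi^{n}(\bfy_{n})$ for all $n$, pinning $\bfz=\bff$. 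Your case analysis on $S'=\nsset{\o,\i,\o\i}$ is right (note $\o$ is a suffix only of $\varphi(\i)$ and $\i$ only of $\varphi(\o)$, as you use). What the paper's route buys is brevity via classical Sturmian theory; what yours buys is self-containment within the paper's own machinery (Lemma~\ref{2013-04-10 20:38}, circularity, the suffix code property) and a template that would adapt to other morphisms in $\calN$ where the left special structure is not off-the-shelf --- compare Lemma~\ref{2013-07-17 17:13}, where the paper again has to reach for the known bispecial factors of $\bft$.
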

\begin{proof}
If $\bfz$ has two $T$-preimages, then $\o \bfz, \i \bfz \in \calX_{\varphi}$. This means that all prefixes of $\bfz$ are so-called left special factors 
of the Fibonacci word $\bff$. The unique word in $\calX_{\varphi}$ with this property is $\bff$; see, e.g.,~\cite[Ch.~2]{Lothaire2002}.
\end{proof}

We say that an infinite word $\bfx$ is in the \emph{strictly positive orbit} of $\bfz$ if $T^{k}\bfz  = \bfx$ for some $k > 0$
and that $\bfx$ is the \emph{strictly negative orbit} of $\bfz$ if $T^{k}\bfx = \bfz$ for some $k > 0$.
Also, an infinite word $\bfx$ is said to have a \emph{tail} $\bfz$ if $\bfx = u \bfz$ for some finite word $u$.

Since $\bff = \o \i \varphi(\i) \varphi^{2}(\i) \cdots$, we have $\bff = \pi(\ka \i^{\omega})$.
Thus  Theorem~\ref{2013-04-05 16:26} gives
\[
T\bff = T \circ \pi (\ka\i^{\omega}) = \pi \circ H_{\varphi}(\ka\i^{\omega}) = \pi(\i^{\omega}).
\]
Similarly,
\[
T^{2} \bff = \pi(\o^{\omega}) \qqtext{and} T^{3} \bff = \pi(\ka^{\omega})
\]
The word $T^{2} \bff$ divides the shift orbit of the Fibonacci word in the following sense.

\begin{theorem}\label{2013-04-06 13:09}
Let $\bfs \in \calS_{\varphi}$. Then 
\begin{enumerate}[(i)]
\item $\pi(\bfs)$ is in the strictly positive orbit of $T^{2}\bff$ if and only if $\bfs$ has a tail $\ka^{\omega}$.\label{itemF1}
\item $\pi(\bfs)$ is in the strictly negative orbit of $T^{2}\bff$ if and only if $\bfs$ has a tail $\i^{\omega}$.\label{itemF2}
\end{enumerate}
\end{theorem}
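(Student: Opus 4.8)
The plan is to transport the $T$-orbit structure of $(\calX_\varphi,T)$ to $(\calS_\varphi,H_\varphi)$ through the conjugacy $\pi$ and the explicit description of $H_\varphi$ from Theorem~\ref{2013-04-05 16:26}, starting from the four identities $\bff=\pi(\ka\i^\omega)$, $T\bff=\pi(\i^\omega)$, $T^2\bff=\pi(\o^\omega)$, $T^3\bff=\pi(\ka^\omega)$ recorded above. First I would describe the two orbits in question: the strictly positive orbit of $T^2\bff$ is exactly $\{T^n\bff:n\ge3\}$, while, since $T$ is not injective on $\calX_\varphi$ and the only branching occurs at $\bff$ (Lemma~\ref{2013-07-18 18:28}), the strictly negative orbit of $T^2\bff$ is the full $T$-backward orbit $\bigcup_{j\ge0}T^{-j}(T\bff)$, which unwinds to $\{T\bff,\bff\}$ together with every word that reaches $\bff$ under some positive power of $T$.

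Because $\pi\circ H_\varphi=T\circ\pi$ (Theorem~\ref{2013-04-12 22:44}) and $\pi$ is injective (Lemma~\ref{2013-04-10 20:39}), we have $\pi(H_\varphi^{\,j}(\ka^\omega))=T^{3+j}\bff$; thus ``$\pi(\bfs)$ is in the strictly positive orbit of $T^2\bff$'' is equivalent to ``$\bfs=H_\varphi^{\,j}(\ka^\omega)$ for some $j\ge0$'', and ``$\pi(\bfs)$ is in the strictly negative orbit of $T^2\bff$'' is equivalent to ``$H_\varphi^{\,k}(\bfs)=\o^\omega$ for some $k\ge1$''. For the two ``only if'' implications I would induct on $j$ and on $k$ respectively, inspecting the three cases in the formula for $H_\varphi$: the first two cases visibly preserve the property ``has tail $\ka^\omega$'', and dually ``has tail $\i^\omega$''; the third case cannot be applied to a sequence with tail $\ka^\omega$ (such a sequence is not in $\bina^\N$), and it cannot produce a sequence with tail $\i^\omega$ because $\lambda$ carries $\bina^\N$ into $\{\o,\ka\}^\N$. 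Hence every $H_\varphi^{\,j}(\ka^\omega)$ has tail $\ka^\omega$; and, since the only $\bfs\in\calS_\varphi$ with $H_\varphi(\bfs)=\o^\omega$ has the form $a\i^\omega$ (already with tail $\i^\omega$) and since $H_\varphi(\bft)$ having tail $\i^\omega$ forces $\bft$ to have tail $\i^\omega$, every $H_\varphi$-ancestor of $\o^\omega$ has tail $\i^\omega$. This settles the forward implications of parts~(i) and~(ii).

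For the converse implications I would combine $\pi(w\bfz)=\pi(w)\varphi^{|w|}(\pi(\bfz))$ (Lemma~\ref{2013-04-10 17:21}) with the observation that $\varphi$ fixes $\bff$ and is non-erasing, so $\varphi^{m}(T^k\bff)=T^{M}\bff$ for some $M\ge k$. If $\bfs=w\ka^\omega\in\calS_\varphi$ then $\pi(\bfs)=\pi(w)\,T^{M}\bff$ with $M\ge3$, so $\pi(\bfs)$ has $T^M\bff$ as a tail and therefore lies in $\bigcup_{j\ge0}T^{-j}(T^M\bff)=\{T^i\bff:0\le i\le M\}\cup\{\text{words reaching }\bff\text{ under a positive power of }T\}$. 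The second set, and the cases $i\in\{0,1\}$, all lie in the strictly negative orbit of $T^2\bff$, so the ``only if'' of part~(ii) would force $\bfs$ to have tail $\i^\omega$, contradicting that $\bfs$ has tail $\ka^\omega$; the case $i=2$ gives $\bfs=\o^\omega$, again a contradiction; hence $i\ge3$ and $\pi(\bfs)$ is in the strictly positive orbit of $T^2\bff$, proving part~(i). The symmetric computation with $\bfz=\i^\omega$ (where $M\ge1$) and the roles of parts~(i) and~(ii) interchanged proves part~(ii).

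The step I expect to require the most care is the logical ordering rather than any single estimate: $H_\varphi$ is not injective --- it has a branch point at $\ka\i^\omega$ mirroring the two $T$-preimages of $\bff$ --- so the two ``only if'' statements must be proved first and independently, straight from the $H_\varphi$-formula, and only then invoked to eliminate the spurious cases in the ``if'' arguments; proving parts~(i) and~(ii) simultaneously would be circular. The remaining ingredients, namely the case-by-case checks against the formula for $H_\varphi$ and the bookkeeping for $\varphi^{m}(T^k\bff)$, are routine.
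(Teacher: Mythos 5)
Your proposal is correct, and its forward (``only if'') halves coincide with the paper's: translate the orbit condition through the conjugacy and the injectivity of $\pi$ into ``$\bfs=H_\varphi^{j}(\ka^\omega)$'' (resp.\ ``$H_\varphi^{k}(\bfs)=\o^\omega$'') and read the tail off the explicit formula for $H_\varphi$, exactly as the paper does. Where you diverge is in the converses. The paper argues locally: from a common tail $f^{m}\bparen{\pi(\ka^\omega)}$ of $\pi(\bfs)$ and $T^{3}\bff$ it writes $\pi(\bfs)=u\bfz$, $T^{3}\bff=v\bfz$ with $u,v$ sharing no common suffix, and runs three cases on the emptiness of $u$ and $v$, killing the bad cases directly with Lemma~\ref{2013-07-18 18:28} and the tail-preservation of $H_\varphi$ --- so each part of the theorem is proved independently of the other. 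You instead compute $\pi(\bfs)=\pi(w)\,T^{M}\bff$, enumerate the entire backward $T$-orbit of $T^{M}\bff$ (which rests on the same Lemma~\ref{2013-07-18 18:28}, since the only branch point is $\bff$), and eliminate the unwanted cases by invoking the already-proved forward implication of the \emph{other} part plus injectivity of $\pi$ to dispose of $\pi(\bfs)=T^{2}\bff$. Both routes are sound and hinge on the same key lemma; yours buys a cleaner global picture of the orbit structure and makes the logical dependence between (i) and (ii) explicit (you rightly insist the forward halves be established first to avoid circularity), while the paper's keeps the two parts self-contained at the cost of a slightly more hands-on case analysis.
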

\begin{proof}
We begin by proving \eqref{itemF1}.
If $\pi(\bfs)$ is in the strictly positive orbit of $T^2 \bff$, then there exists $k > 0$ such that
\[
\pi(\bfs) = T^{k + 2} \bff = T^k \circ \pi(\o^{\omega}) = \pi \circ H_{\varphi}^k(\o^{\omega}) = \pi \circ H_{\varphi}^{k - 1}(\ka^{\omega}).
\]
Since $\pi$ is injective, we have $\bfs = H_{\varphi}^{k - 1}(\ka^{\omega})$. 
Thus we see from the characterization of $H_{\varphi}$ given in Theorem~\ref{2013-04-05 16:26} that $\bfs$ has a tail $\ka^{\omega}$.

Conversely, suppose that $\bfs$ has a tail $\ka^{\omega}$. Then there exists an integer $m\geq 0$ such that 
both $\pi(\bfs)$ and $T^3\bff$ have a tail $f^m\bparen{\pi(\ka^{\omega})}$.
This implies that there exist finite words $u,v$ and $\bfz \in \{\o, \i\}^{\N}$ such that $\pi(\bfs) = u\bfz$ and $T^3\bff = v\bfz$ and the only common suffix of 
$u$ and $v$ is the empty word $\epsilon$.

\emph{Case 1:} $u \neq \epsilon$ and $v \neq \epsilon$. Then $\o \bfz, \i \bfz \in \calX_{\varphi}$,
and so by Lemma~\ref{2013-07-18 18:28}, we have $\bfz = \bff$. But $\bff$ cannot be a tail of $T^3\bff$ because $\bff$ is aperiodic, a contradiction.

\emph{Case 2:} $u \neq \epsilon$ and $v = \epsilon$. Then both $T^{\nabs{u} - 1} \circ \pi(\bfs)$ and $T^2 \bff$ are $T$-preimages of $T^3\bff$,
so that $T^{\nabs{u} - 1} \circ \pi(\bfs) = T^2\bff$ again by Lemma~\ref{2013-07-18 18:28}.
Thus 
\[
\pi(\o^{\omega}) = T^2 \bff = T^{\nabs{u} - 1} \circ \pi(\bfs) = \pi \circ H_{\varphi}^{\nabs{u} - 1}(\bfs),
\]
from which we get $H_{\varphi}^{\nabs{u} - 1}(\bfs) = \o^{\omega}$ by the injectivity of $\pi$. But this is not possible because $\bfs$ has a tail $\ka^{\omega}$ and 
Theorem~\ref{2013-04-05 16:26} says that $H_{\varphi}$ preserves such tails.

\emph{Case 3:} $u = \epsilon$. Then $\pi(\bfs)$ is in the strictly positive orbit of $T^2\bff$, and this is what we wanted to prove.

Let us then prove \eqref{itemF2}. If $\pi(\bfs)$ is in the negative orbit of $T^2 \bff$, then
$T^k \pi(\bfs) = T^2 \bff$ for some $k > 0$. Then
\[
\pi(\i^{\omega}) = 
T \bff = T^{k - 1} \circ  \pi(\bfs) = 
\pi \circ H_{\varphi}^{k - 1}(\bfs),
\]
so that $H_{\varphi}^{k - 1}(\bfs) = \i^{\omega}$ by the injectivity of $\pi$.
The characterization of $H_{\varphi}$ given in Theorem~\ref{2013-04-05 16:26} 
shows, then, that $\bfs$ must have a tail $\i^{\omega}$.

Conversely, suppose that $\bfs$ has a tail $\i^{\omega}$. Since $T\bff = \pi(\i^{\omega})$, 
there exists an integer $m\geq 0$ such that 
both $\pi(\bfs)$ and $T\bff$ have a common tail $f^m\bparen{\pi(\i^{\omega})}$.
Thus $\pi(\bfs) = u\bfz$ and $T\bff = v\bfz$ for some finite words $u,v$ and $\bfz \in \{\o, \i\}^{\N}$ such that the only common suffix of 
$u$ and $v$ is the empty word $\epsilon$. 

\emph{Case 1:} $v \neq \epsilon$ and $u \neq \epsilon$. Then $\o\bfz, \i\bfz \in \calX_{\varphi}$, so that $\bfz = \bff$, contradicting the fact that $\bff$ cannot be a tail of $T\bff$.

\emph{Case 2:} $v \neq \epsilon$ and $u = \epsilon$.  Then
\[
\pi(\bfs) = T^{\nabs{v}} \circ T\bff = T^{\nabs{v}} \circ \pi(\i^{\omega})  = \pi \circ H_{\varphi}^{\nabs{v}} (\i^{\omega})
=  \pi \circ H_{\varphi}^{\nabs{v} - 1} (\o^{\omega}),
\]
so that $ \bfs = H_{\varphi}^{\nabs{v} - 1} (\o^{\omega})$. Once again, the characterization of $H_{\varphi}$ shows that $\i^{\omega}$ cannot be a tail of $\bfs$, 
a contradiction.

\emph{Case 3:} $v = \epsilon$. Then  $\pi(\bfs)$ is in the strictly negative orbit of $T^2\bff$, which is want we wanted to show.
\end{proof}

\section{The suffix conjugate of the Thue-Morse subshift}\label{2013-04-14 13:38}

Let $\mu$ be the Thue-Morse morphism $\o \mapsto \oi$, $\i \mapsto \io$ and $\bft = \mu^{\omega}(\o)$
the Thue-Morse word.  Let $\calX_{\mu}$ denote the shift orbit closure of $\bft$, so that 
the Thue-Morse subshift is $(\calX_{\mu}, T)$.
In this section we will characterize its suffix conjugate $(\calS_{\mu}, H_{\mu})$ defined in Theorem~\ref{2013-04-12 22:44}.

Here the set of suffixes is $S' = \nsset{\o, \i, \o\i, \i\o}$ and $S = \nsset{\o, \i, \ka, \ko}$,
and we let~$c$ be the bijection between $S$ and $S'$ given by
\[
c(\o) = \o, \qquad c(\i) = \i, \qquad c(\ka) = \o\i, \qquad c(\ko) = \i\o.
\]

The minimal forbidden words of the Thue-Morse word
are $\o\o\o$, $\i\i\i$,
\begin{align*}
\o\mu^{2n}(\o\i\o) \o,& \qquad \o\mu^{2n}(\i\o\i) \o, \qquad \i\mu^{2n}(\o\i\o) \i, 
\qquad \i\mu^{2n}(\i\o\i)\i
\\
\intertext{and}
\i\mu^{2n+1}(\o\i\o) \o,& \qquad \i\mu^{2n+1}(\i\o\i) \o, \qquad \o\mu^{2n+1}(\o\i\o) \i,
\qquad \o\mu^{2n+1}(\i\o\i) \i 
\end{align*}
for all $n\geq 0$; see ~\cite{MigResSci2002,Shur2005}. 

Let us introduce a shorthand.
For $x,y,z \in \bina$ and $k\geq0$, we write
\[
\gamma(k,x,y,z) = x \mu^{k}(y \overline{y} y) z.
\]
Here the overline notation $\overline{\cdot}$  swaps $\o$'s and $\i$'s.
The minimal forbidden words of $\bft$ can then be written as $xxx$ and 
\begin{equation}\label{2013-04-09 22:16}
\gamma(2n, x, x, x), \quad \gamma(2n, x, \overline{x}, x),
\quad \gamma(2n + 1, x, x, \overline{x}), \quad \gamma(2n + 1, x, \overline{x}, \overline{x}),
\end{equation}
for all $n\geq 0$ and $x \in \bina$. Furthermore, 
\begin{itemize}
\item $\mu\bparen{\gamma(k, x, y, z)} = x \gamma(k+1, \overline{x}, y, z) \overline{z}$, and
\item  $\gamma(k, x, y, z)$ is a forbidden word if and only if 
$\gamma(k - 1, \overline{x}, y, z)$ is a forbidden word, where $k\geq1$.
\end{itemize}

The mapping $\lambda$ defined in~\eqref{2013-04-13 20:46} and~\eqref{2013-04-14 16:13} in the current case 
is 
\[
\lambda(\o) = \ka, \qquad \lambda(\i) = \ko, \qquad \lambda(\ka) = \ka \i , \qquad \lambda(\ko) = \ko\o.
\]

The next lemma is analogous to Lemma~\ref{2013-04-14 01:50} for the Fibonacci subshift.

\begin{lemma} \label{2013-03-28 19:40}
Let $x,y \in \kvarta^*$ and suppose that $\pi(x) <_{p} \pi(y)$.
Then one of the following holds:
\begin{enumerate}[(i)]
\item $x^{\flat} <_{p} y^{\flat}$ 
\item $x = u z \overline{z}$ and $y = u \lambda(z) s$, where $z \in \bina$ and $s$ has prefix $z$ or $\lambda(z)$.
\item $x = u \lambda(z)z$ and $y = uzs$, where $z \in \bina$ and $s$ has prefix $\overline{z}\overline{z}$ or $\overline{z}\lambda(\overline{z})$.
\end{enumerate}
\end{lemma}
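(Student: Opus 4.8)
The plan is to mimic the proof of Lemma~\ref{2013-04-14 01:50} as closely as possible, reducing to the one-step behaviour of $\mu$ via Lemma~\ref{2013-04-10 17:21} and then analysing a mismatch after a single application of $\mu$. First I would assume that $x^{\flat}$ is not a prefix of $y^{\flat}$ and write $x = uat$, $y = ubs$ with $a \neq b$ in $\kvarta$ and $t,s$ nonempty. Applying Lemma~\ref{2013-04-10 17:21} to finite words gives $\pi(x) = \pi(u)\mu^{\nabs u}(\pi(at))$ and $\pi(y) = \pi(u)\mu^{\nabs u}(\pi(bs))$, so $\pi(x) <_p \pi(y)$ forces $\mu^{\nabs u}(\pi(at)) <_p \mu^{\nabs u}(\pi(bs))$. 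The key technical ingredient I will need here is a Thue–Morse version of Lemma~\ref{2013-03-28 17:54}: that $\mu^{k}(w) <_p \mu^{k}(w')$ implies $w^{\flat} <_p w'^{\flat}$ for nonempty binary words $w,w'$. This is true and provable by the same one-step argument (a mismatch $w = vc\cdots$, $w' = vd\cdots$ with $c\neq d$ produces, after one application of $\mu$, words $\mu(c\cdots)$ and $\mu(d\cdots)$ that agree on at most the first letter since $\mu(\o)=\o\i$, $\mu(\i)=\i\o$ differ in the second position); I would either cite it or insert it as a short preliminary lemma. Using it we get $\pi(at)^{\flat} <_p \pi(bs)^{\flat}$, i.e.
\[
c(a)\,\mu(\pi(t))^{\flat} <_p c(b)\,\mu(\pi(s))^{\flat}.
\]

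Next I would run a case analysis on the unordered pair $\{c(a), c(b)\}$ of distinct suffixes in $S' = \nsset{\o,\i,\o\i,\i\o}$. Because $c(a)$ and $c(b)$ are the first words on each side and they must be prefix-compatible, the only admissible pairs are those where one of $c(a), c(b)$ is a single letter $z$ and the other starts with $z$: namely $\{c(a),c(b)\} = \{z, z\overline z\}$ (so $\{\o,\o\i\}$ or $\{\i,\i\o\}$). In each such pair I split according to which side is the length-one word. If $c(a) = z$ and $c(b) = z\overline z$ (i.e.\ $a\in\bina$, $c(b)$ of length two), then comparing $z\,\mu(\pi(t))^{\flat}$ with $z\overline z\,\mu(\pi(s))^{\flat}$ forces $\mu(\pi(t))^{\flat}$ to begin with $\overline z$; tracking how $\mu$ and $c$ interact — $c^{-1}$ of a length-one binary word, and the fact that $\lambda$ on these letters is exactly $c^{-1}\circ\mu\circ c$ extended as in~\eqref{2013-04-14 16:13} — I would identify $a = z$, $b = \lambda^{-1}$-related to $z$... more precisely, unwinding $b$: the length-two suffix $z\overline z$ is $c$ of the letter whose $\lambda$-image records $\mu(z)$. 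After the bookkeeping this yields conclusion (ii): $x = u z\overline z$ and $y = u\lambda(z)s$ with $s$ beginning with $z$ or $\lambda(z)$ (the two sub-possibilities coming from whether the continuation after $z$ in $x$ was itself a letter or a length-two suffix). The symmetric sub-case, where instead $c(a) = z\overline z$ and $c(b) = z$, similarly produces conclusion (iii) after noting $\lambda(z)$ ends in $z$, so $c(a) = z\overline z$ corresponds to $x$ beginning with $\lambda(\overline z)\,\overline z$ up to renaming $z \mapsto \overline z$; here the continuation constraint becomes $s$ has prefix $\overline z\,\overline z$ or $\overline z\,\lambda(\overline z)$.

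The main obstacle I anticipate is exactly this last bookkeeping: keeping straight, across one application of $\mu$, the interaction between (a) the first letter $c(a)$ resp.\ $c(b)$ coming out as a literal prefix, (b) the shift induced by $^{\flat}$ on the $\mu$-images, and (c) the translation back through $c^{-1}$ to pin down not just the first symbol of $x$ and $y$ but the constraint on the second symbol of the continuation $s$ (which is what distinguishes the "prefix $z$ or $\lambda(z)$" alternative in (ii), and the "prefix $\overline z\,\overline z$ or $\overline z\,\lambda(\overline z)$" alternative in (iii)). Unlike the Fibonacci case, where $\varphi$ is non-uniform and a single image $\o\i$ carried all the asymmetry, here $\mu$ is uniform of length two, so the analysis is a touch more symmetric but has twice as many suffixes to juggle; I would organise it by first fixing the pair $\{z,z\overline z\}$ and then the orientation, and verify each of the finitely many resulting prefix comparisons explicitly using $\mu(\o)=\oi$, $\mu(\i)=\io$ and the identities for $\lambda$ listed just before the lemma. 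Once the first-symbol identification is made, the continuation constraint on $s$ follows by one more invocation of the same prefix-comparison, so no new idea is needed beyond careful case management.
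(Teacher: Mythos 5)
Your overall strategy is the paper's: assume $x^{\flat}$ is not a prefix of $y^{\flat}$, factor out the common prefix $u$ via Lemma~\ref{2013-04-10 17:21}, and analyse the resulting mismatch after one application of $\mu$; the case analysis on $\{c(a),c(b)\}=\{z,z\overline{z}\}$ is also the paper's. But there is a genuine gap in your key technical step. You propose to import the Fibonacci-style lemma ``$\mu^{k}(w) <_{p} \mu^{k}(w')$ implies $w^{\flat} <_{p} w'^{\flat}$'' and thereby obtain only $c(a)\mu(\pi(t))^{\flat} <_{p} c(b)\mu(\pi(s))^{\flat}$. This flattened inequality is too weak to prove the second halves of conclusions (ii) and (iii). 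In case (ii), once $t=\overline{z}$ is identified, the left-hand side collapses to $z\overline{z}$ and the relation $z\overline{z} <_{p} z\overline{z}\,\mu(\pi(s))^{\flat}$ holds for \emph{every} nonempty $s$, so no constraint on the first letter of $\pi(s)$ --- hence none of ``$s$ has prefix $z$ or $\lambda(z)$'' --- can be extracted. The same loss occurs in case (iii), where the needed second letter $\overline{z}$ of $\pi(s)$ sits exactly in the position that the $\flat$ discards. Your closing remark that the continuation constraint ``follows by one more invocation of the same prefix-comparison'' does not repair this: the information was destroyed when the last letter was dropped.

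The fix is the observation the paper actually uses, and which your own justification almost contains but misstates: $\mu(\o)=\o\i$ and $\mu(\i)=\i\o$ differ already in their \emph{first} letters (not, as you write, in the second), so a mismatch between binary words $w$ and $w'$ produces a mismatch between $\mu(w)$ and $\mu(w')$ at the corresponding even position; combined with $\nabs{\mu(\o)}=\nabs{\mu(\i)}$ and injectivity this gives the stronger implication $\mu^{k}(w) <_{p} \mu^{k}(w') \Rightarrow w <_{p} w'$, with no letter dropped. (This is precisely where the uniform Thue--Morse case differs from the non-uniform Fibonacci case, where only the $\flat$-version is true.) With the full $c(a)\mu(\pi(t)) <_{p} c(b)\mu(\pi(s))$ in hand, the identification of $t$, of the pair $\{a,b\}=\{z,\lambda(z)\}$, and of the stated prefix constraints on $s$ all follow by the letter-by-letter comparison you describe. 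So the route is right, but as written your plan cannot establish the prefix conditions on $s$ in (ii) and (iii); you must replace the $\flat$-lemma by its exact uniform-morphism counterpart.
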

\begin{proof}
Suppose that $x^{\flat}$ is not a prefix of~$y^{\flat}$. 
Then $x = uat$ and $y=ubs$ with distinct letters $a, b \in \kvarta$ 
and  nonempty words $t,s \in \kvarta^{+}$.
Lemma~\ref{2013-04-10 17:21} gives
\[
\pi(x) = \pi(u) \mu^{\nabs{u}}\bparen{\pi(at)} 
\qqtext{and}
\pi(y) = \pi(u) \mu^{\nabs{u}}\bparen{\pi(bs)},
\]
so that $\pi(x) <_{p} \pi(y)$ implies 
$\mu^{\nabs{u}}\bparen{\pi(at)} <_{p} \mu^{\nabs{u}}\bparen{\pi(bs)}$.
Since $\mu$ is an injective and $\nabs{\mu(\o)} = \nabs{\mu(\i)}$, it follows that
$\pi(at) <_{p}  \pi(bs)$, or
\[
c(a) \mu\bparen{\pi(t)} <_{p} c(b) \mu\bparen{\pi(s)}.
\]

Since $a \neq b$, this implies that $\nsset{a,b} = \nsset{z, \lambda(z)}$ for some $z \in \bina$.
If $a = z$ and $b = \lambda(z)$, a simple analysis shows that we have $t = \overline{z}$ and $s$ starts with $z$
or with $\lambda(z)$;
this corresponds to option~(ii).
Similarly, if $a = \lambda(z)$ and $b = z$, then $t=z$ and $s$ starts with $\overline{z}\overline{z}$ or $\overline{z}\lambda(\overline{z})$; this corresponds to option~(iii).
\end{proof}

Our next goal is to characterize the minimal covers of the minimal forbidden words of $\calX_{\mu}$.
We begin with the next lemma, whose easy verification is left to the reader.

\begin{lemma}\label{2013-04-09 22:13}
Let $x\in \bina$.
The forbidden words $xxx$ and $\gamma(0, x, x, x)$ do not have covers.
For other forbidden words, we have the following.
\begin{enumerate}[(i)]
\item The minimal covers of $\gamma(0, x, \overline{x}, x)$ are in
\[
\bparen{x + \lambda(\overline{x})}\overline{x}\bparen{\overline{x} + \lambda(\overline{x})}
\qqtext{and}
\lambda(x) x \bparen{x + \lambda(x)}.
\]
\item The minimal covers of $\gamma(1, x, x, \overline{x})$ are in
\[
\bparen{x + \lambda(\overline{x})} x \overline{x} \bparen{\overline{x} + \lambda(\overline{x})}
\qqtext{and}
\bparen{x + \lambda(\overline{x})} \lambda(x) \bparen{x + \lambda(x)}.
\]
\item The minimal covers of $\gamma(1, x, \overline{x}, \overline{x})$ are in
\[
\bparen{x + \lambda(\overline{x})} \overline{x} x \bparen{\overline{x} + \lambda(\overline{x})}
\qqtext{and}
\bparen{x + \lambda(\overline{x})} \overline{x} \lambda(x).
\]
\end{enumerate}
\end{lemma}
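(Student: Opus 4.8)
The plan is to treat the infinitely many forbidden words uniformly by exploiting the reduction $\gamma(k,x,y,z)$ is forbidden iff $\gamma(k-1,\overline{x},y,z)$ is forbidden, together with the morphism identity $\mu(\gamma(k,x,y,z)) = x\,\gamma(k+1,\overline{x},y,z)\,\overline{z}$. The key observation is that a minimal cover $u$ of a forbidden word $v = \gamma(k,x,y,z)$ with $k\geq 1$ cannot have $v$ buried inside $\mu(\pi(\talf u))$ (by the minimality condition $v\not\subset f(\pi(\talf u))$), so $v$ must be a prefix of $\pi(u)$ up to a one-letter left adjustment — that is, writing $u = au'$ we have $v \leq_p \pi(bu')$ for a suitable first letter $b$, and the leading letter of $v$ forces which letters $a,b$ are admissible (this is exactly the $\o\o$ versus $\i\o$ style dichotomy used in Lemma~\ref{2013-04-14 01:49}). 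First I would prove directly the three base cases $\gamma(0,x,\overline{x},x)$, $\gamma(1,x,x,\overline{x})$, $\gamma(1,x,\overline{x},\overline{x})$ for a fixed $x$, say $x=\o$; by the $\overline{\cdot}$-symmetry of the whole setup (swapping $\o\leftrightarrow\i$ throughout $S$ via $c$) the case $x=\i$ is automatic. For each base case one computes the shortest word $w\in S^*$ with $v\leq_p\pi(w)$ using the explicit images $c(\o),c(\i),c(\ka),c(\ko)$ and the identity $\mu^k(\cdot)$, then reads off that $\pi(w)$ already contains $v$ while $\pi(w^\flat)$ and $f(\pi(\talf w))$ do not; the admissible first-letter choices and last-letter extensions then produce exactly the stated regular expressions.

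Second, I would push from the base cases to all $k$ by an induction that mirrors Lemma~\ref{2013-03-28 15:43}: applying $\mu$ to a minimal cover of $\gamma(k-1,\overline{x},y,z)$ and using $\mu\circ\pi = \pi\circ\lambda$-type compatibility (the morphism $\lambda$ is precisely the one recorded just before the lemma, $\lambda(\o)=\ka,\ \lambda(\i)=\ko,\ \lambda(\ka)=\ka\i,\ \lambda(\ko)=\ko\o$) transports covers of $\gamma(k-1,\overline{x},y,z)$ to covers of $\gamma(k,x,y,z)$. Concretely: if $w$ is a minimal cover of $\gamma(k-1,\overline{x},y,z)$, then $\lambda(w)$ (trimmed at both ends) covers $\mu(\gamma(k-1,\overline{x},y,z)) = \overline{x}\,\gamma(k,x,y,z)\,\overline{z}$, hence covers $\gamma(k,x,y,z)$; conversely Lemma~\ref{2013-03-28 19:40} is the tool that shows no new covers appear — if $u$ is a minimal cover of $\gamma(k,x,y,z)$ then comparing $\pi(u)$ against the canonical prefix forces, via the trichotomy, that either $u^\flat$ agrees with the canonical cover or $u$ ends in one of the short tails $z\overline{z}$ / $\lambda(z)z$ that Lemma~\ref{2013-03-28 19:40} isolates, and in the latter situation one checks (using~\eqref{2013-04-09 22:16} and the $\mu$-image formula) prefix-incompatibility with $\gamma(k,x,y,z)$, exactly as in the Fibonacci argument. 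Tracking how $\lambda$ acts on the first and last one or two letters is what turns the single regular expression at level $k-1$ into the one at level $k$; since $\lambda(x)$ and $\lambda(\overline{x})$ interact with the boundary letters $x,\overline{x}$ in a fixed way, the family of regular expressions is stable and one obtains the displayed forms for every $n\geq 0$ by iterating twice (to return to the same parity and the same $x$).

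The main obstacle I expect is the bookkeeping at the two ends of the cover, not the induction skeleton: the forbidden word $\gamma(k,x,y,z)$ begins with $x$ and ends with $z$ (or $\overline z$), and depending on whether the relevant boundary letter of the \emph{preimage} forbidden word is $x$ or $\overline x$, the set of admissible first letters of the cover either is forced to a single value (when the word starts $xx$, as in $\o\o\cdots$) or splits into the two-element set $\{x,\lambda(\overline x)\}$ (when it starts $x\overline x$, as in $\o\i\cdots$). Getting every one of these left-prefix and right-suffix cases to line up with the exact regular expressions in (i)–(iii) — and verifying the "not prefix-compatible" claims that rule out the spurious middle extension, the analogue of "$\pi(\o\ka\i^{2n}\i)$ is not prefix compatible with $d_{2n+5}$" in Lemma~\ref{2013-04-14 01:49} — is the genuinely fiddly part. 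Everything else reduces to the already-available Lemmas~\ref{2013-04-10 17:21}, \ref{2013-03-28 19:40}, \ref{2013-04-11 12:48} and the explicit combinatorics of $\mu$, so I would state the base-case computations explicitly and then invoke the $\overline{\cdot}$- and parity-symmetries to suppress the repetitive cases, exactly as the lemma statement already does by phrasing everything in terms of a generic $x\in\bina$.
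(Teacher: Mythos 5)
Your plan is consistent with what the paper actually does here: the paper offers no argument for this lemma beyond the phrase ``whose easy verification is left to the reader'', and the intended verification is precisely your first paragraph --- a direct finite computation of $\pi(w)$ for the handful of candidate covers of the five short words $xxx$, $\gamma(0,x,x,x)$, $\gamma(0,x,\overline{x},x)$, $\gamma(1,x,x,\overline{x})$, $\gamma(1,x,\overline{x},\overline{x})$ (each of length at most $8$), checking minimality against $\pi(w^{\flat})$ and $\mu\bparen{\pi(\talf w)}$, with the $\o\leftrightarrow\i$ (and $\ka\leftrightarrow\ko$) symmetry halving the work. Be aware, though, that your entire second paragraph --- the induction on $k$ transporting covers via $\lambda$ and controlled by Lemma~\ref{2013-03-28 19:40} --- is aimed at a different statement: the present lemma concerns only the levels $k\leq 1$, while the case $k\geq 2$ is a separate result, Lemma~\ref{2013-04-09 16:05}, which the paper proves not by induction on $k$ but directly from the closed form $\gamma(k,x,y,z)=\pi\bparen{x\lambda(y)\overline{y}^{\,k-2}\lambda(\overline{y})}z$ of Lemma~\ref{2013-04-08 16:49} combined with Lemma~\ref{2013-03-28 19:40}. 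For the lemma actually at hand, nothing beyond the finite check and the symmetry is needed, and your proposal contains that.
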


We will characterize the minimal covers of the remaining forbidden words in Lemma~\ref{2013-04-09 16:05}
with the help of the following result.

\begin{lemma}\label{2013-04-08 16:49}
For $x,y,z \in \bina$ and $k\geq 2$, we have
\begin{align}
\gamma(k, x, y, z) = \pi\bparen{x\lambda(y) \overline{y}^{k-2}\lambda(\overline{y})} z. \label{2013-04-08 15:43}\\
\intertext{Furthermore, $x\lambda(y) \overline{y}^{k-1} w$ is a minimal cover of $\gamma(k, x, y, \overline{y})$ with}
\gamma(k, x, y, \overline{y}) <_{p} \pi\bparen{x\lambda(y) \overline{y}^{k-1} w} \label{2013-04-14 23:17}
\end{align}
if and only if $w \in \nsset{y, \lambda(y)}$. 
\end{lemma}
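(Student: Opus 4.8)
The plan is to prove the two assertions of Lemma~\ref{2013-04-08 16:49} separately, relying throughout on the two displayed identities for $\mu$ and for $\gamma$ stated just before Lemma~\ref{2013-03-28 19:40}, together with the identity $\pi(xy) = \pi(x)\mu^{\nabs{x}}\bparen{\pi(y)}$ from Lemma~\ref{2013-04-10 17:21}. The first identity~\eqref{2013-04-08 15:43} I would establish by a direct computation: expanding $\pi\bparen{x\lambda(y)\overline{y}^{k-2}\lambda(\overline{y})}$ via the $\pi$-formula, one gets $c(x)\,\mu\bparen{c(\lambda(y))}\,\mu^{2}(y')\cdots\mu^{k-1}(y')\,\mu^{k}\bparen{c(\lambda(\overline{y}))}$ where $y' = c(\overline{y}) = \overline{y}$ and $c(\lambda(y)) = f(c(y)) = \mu(y) = y\overline{y}$. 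Collecting the middle block $\mu(y\overline{y})\mu^{2}(\overline{y})\cdots\mu^{k-1}(\overline{y})$ and the tail $\mu^{k}(\overline{y}\,\overline{\overline{y}})=\mu^{k}(\overline{y}y)$ telescopes, after prepending $c(x)=x$ and appending the stray $z$, to $x\,\mu^{k}(y\overline{y}y)\,z = \gamma(k,x,y,z)$; here I would use $\mu^{k}(a)$'s self-similar structure (e.g.\ $\mu(a)=a\overline{a}$ iterated) to make the telescoping transparent, much as Lemma~\ref{2013-03-28 15:43} did for the Fibonacci case with the $f_n$'s.

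For the second assertion I would first check the ``if'' direction. Suppose $w\in\nsset{y,\lambda(y)}$. Specializing $z=\overline{y}$ in~\eqref{2013-04-08 15:43} gives $\gamma(k,x,y,\overline{y}) = \pi\bparen{x\lambda(y)\overline{y}^{k-2}\lambda(\overline{y})}\overline{y}$, and since $c(\lambda(\overline{y})) = \mu(\overline{y}) = \overline{y}y$ has $\overline{y}$ as a prefix, one sees $\gamma(k,x,y,\overline{y}) \leq_{p} \pi\bparen{x\lambda(y)\overline{y}^{k-1}}\cdot(\text{something})$, and then appending either letter of $\nsset{y,\lambda(y)}$ keeps $\gamma(k,x,y,\overline{y})$ as a proper prefix of $\pi\bparen{x\lambda(y)\overline{y}^{k-1}w}$; minimality (that $\gamma$ is not a factor of $\pi$ of the word with the last letter deleted, nor of $\mu\bparen{\pi(\cdot)}$ of the word with the first letter deleted) follows because deleting the last letter of $w$ or applying $\mu^{-1}$ to the image of the tail strictly shortens the occurrence below $\nabs{\gamma(k,x,y,\overline{y})}$ — this is the same bookkeeping used in Lemma~\ref{2013-04-14 01:49}. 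The ``only if'' direction is where the real work lies: assume $x\lambda(y)\overline{y}^{k-1}w$ is a minimal cover with $\gamma(k,x,y,\overline{y})$ as a proper prefix of its $\pi$-image, and deduce $w\in\nsset{y,\lambda(y)}$. Here I would invoke Lemma~\ref{2013-03-28 19:40}: comparing $\pi\bparen{x\lambda(y)\overline{y}^{k-1}w}$ against the word $x\lambda(y)\overline{y}^{k-2}\lambda(\overline{y})$ whose $\pi$-image realizes $\gamma(k,x,y,\overline{y})$ exactly (up to the trailing $\overline{y}$), the prefix relation forces the two $S$-words to agree up to their penultimate letters or to be in one of the two branching configurations (ii), (iii) of that lemma. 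Since both words begin with the common block $x\lambda(y)\overline{y}^{k-2}$, the branch can only occur at the $(k)$-th position, where one word has $\lambda(\overline{y})$ and the other continues with $\overline{y}$; this is precisely configuration~(iii) with $z=\overline{y}$ (note $\lambda(\overline{y})\overline{y}$ versus $\overline{y}s$), which pins down that $s$, i.e.\ $w$, starts with $y\,y$ or $y\,\lambda(y)$ — but minimality of the cover forbids the extra letter, leaving $w\in\nsset{y,\lambda(y)}$.

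The main obstacle I anticipate is the ``only if'' argument: one must rule out covers $x\lambda(y)\overline{y}^{k-1}w$ where the occurrence of $\gamma(k,x,y,\overline{y})$ inside $\pi\bparen{x\lambda(y)\overline{y}^{k-1}w}$ does \emph{not} start at position zero, and also covers where $w$ starts with a ``wrong'' letter such as $\overline{y}$ or $\overline{\lambda(y)}$. For the former I would note that $\gamma(k,x,y,\overline{y})$ begins with $x\mu^{k}(y)_{[0]} = x\cdot y$ or $x\overline{x}$-type prefixes that, by the synchronization/circularity of $\mu$ (Lemma-level facts already available since $\mu\in\calN$), cannot overlap a later position of $\pi\bparen{x\lambda(y)\cdots}$ without forcing $x$ into $\calL(\calX_{\mu})$ in a contradictory way — exactly the mechanism of the argument in Lemma~\ref{2013-04-14 01:49} that $u=ax$ must have $a=b$. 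For the latter, a short case check using the explicit $\lambda$ and the two $\gamma$-recurrences shows $\pi$ of such a cover is not prefix-compatible with $\gamma(k,x,y,\overline{y})$. I would organize these as two or three short ``Case'' paragraphs mirroring the Fibonacci proof, so the length stays modest. Everything else is the kind of routine telescoping computation I would state but not belabor.
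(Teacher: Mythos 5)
Your proof of identity~\eqref{2013-04-08 15:43} and of the ``if'' half of the second assertion follows the paper's computation essentially verbatim: the paper telescopes $\mu^k(y) = y\overline{y}\mu(\overline{y})\cdots\mu^{k-1}(\overline{y}) = \mu\bparen{c(\lambda(y))}\mu^{2}(\overline{y})\cdots\mu^{k-1}(\overline{y})$ to obtain $x\mu^{k}(y) = \pi\bparen{x\lambda(y)\overline{y}^{k-2}}$ and then appends $\mu^{k}\bparen{c(\lambda(\overline{y}))}z$, exactly as you propose. That part is fine.

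The ``only if'' half as you describe it has a genuine gap. You propose to apply Lemma~\ref{2013-03-28 19:40} to the pair $u = x\lambda(y)\overline{y}^{k-2}\lambda(\overline{y})$ and $v = x\lambda(y)\overline{y}^{k-1}w$ and to conclude that they must land in branching configuration~(iii), which would constrain $w$. But the conclusion of Lemma~\ref{2013-03-28 19:40} is a non-exclusive disjunction, and its alternative~(i) is automatically satisfied for this pair: $u^{\flat} = x\lambda(y)\overline{y}^{k-2}$ is trivially a proper prefix of $v^{\flat} = x\lambda(y)\overline{y}^{k-1}w^{\flat}$. Hence the lemma imposes no constraint whatsoever on $w$; moreover configuration~(iii) does not even syntactically match, since it requires the first word to have the form $u'\lambda(z)z$ with a letter $z$ \emph{following} $\lambda(z)$, whereas your first word ends in $\lambda(\overline{y})$. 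The paper's actual argument is more direct and is what you should use: from $x\mu^{k}(y) = \pi\bparen{x\lambda(y)\overline{y}^{k-2}}$ and Lemma~\ref{2013-04-10 17:21} one gets $x\mu^{k}(y\overline{y}y) = \pi\bparen{x\lambda(y)\overline{y}^{k-1}}\mu^{k}(y)$, so that $\gamma(k,x,y,\overline{y}) <_{p} \pi\bparen{x\lambda(y)\overline{y}^{k-1}w} = \pi\bparen{x\lambda(y)\overline{y}^{k-1}}\mu^{k+1}\bparen{c(w)}$ holds if and only if $\mu^{k}(y)\overline{y} <_{p} \mu^{k+1}\bparen{c(w)} = \mu^{k}\bparen{\mu(c(w))}$, i.e.\ if and only if $\mu\bparen{c(w)}$ begins with $y\overline{y}$, which happens exactly for $w \in \nsset{y, \lambda(y)}$. (Your separate worry about occurrences of $\gamma(k,x,y,\overline{y})$ not starting at position zero is moot here, since the prefix relation~\eqref{2013-04-14 23:17} is part of the hypothesis of the ``only if'' direction; that issue only becomes relevant in Lemma~\ref{2013-04-09 16:05}.)
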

\begin{proof}
Observe that
\begin{align}
\mu^{k}(y) &= y\overline{y} \mu(\overline{y}) \mu^{2}(\overline{y}) \cdots \mu^{k-1}(\overline{y})\nonumber\\
&= y \overline{y} \overline{y} y \mu^{2}(\overline{y}) \cdots \mu^{k-1}(\overline{y})\nonumber \\
&=  \mu\bparen{c(\lambda(y))}  \mu^{2}(\overline{y}) \cdots \mu^{k-1}(\overline{y}) \nonumber\\
\intertext{so that}
x \mu^{k}(y) 
&= x \mu\bparen{c(\lambda(y))} \mu^{2}(\overline{y}) 
\cdots \mu^{k-1}(\overline{y}) = \pi\bparen{x \lambda(y) \overline{y}^{k-2}} \label{2013-04-15 00:00}.
\end{align}
Consequently, we have
\[
x \mu^{k}(y\overline{y} y) z = 
x \mu^{k}(y) \mu^{k}\bparen{c(\lambda(\overline{y}))} z = 
\pi\bparen{x \lambda(y) \overline{y}^{k-2} \lambda(\overline{y})} z,
\]
verifying~\eqref{2013-04-08 15:43}. We also have
\begin{align*}
x \mu^{k}(y\overline{y} y) &= x \mu^{k}(y) \mu^{k}(\overline{y}) \mu^{k}(y)
= \pi\bparen{x \lambda(y) \overline{y}^{k-1}} \mu^{k}(y),
\end{align*}
where the latter identity is due to~\eqref{2013-04-15 00:00} and Lemma~\ref{2013-04-10 17:21}.
From this it follows that
\[
\gamma(k, x, y, \overline{y}) = x \mu^{k}(y\overline{y} y)\overline{y}  <_{p}  
\pi\bparen{x \lambda(y) \overline{y}^{k-1} w}
\]
if and only if $w \in \nsset{y, \lambda(y)}$,
in which case $x \lambda(y) \overline{y}^{k-1} w$ is a minimal cover of $\gamma(k, x, y, \overline{y})$.
\end{proof}

\begin{lemma}\label{2013-04-09 16:05}
Let $x,y \in \bina$ and $k\geq 2$.
A word is a minimal cover of $\gamma(k, x, y, y)$
if and only if it is in
\begin{equation}\label{2013-04-14 21:45}
\bparen{x + \lambda(\overline{x})}\lambda(y) \overline{y}^{k-2} \lambda(\overline{y}) \bparen{y + \lambda(y)}.
\end{equation}
A word is a minimal cover of $\gamma(k, x, y, \overline{y})$
if and only if it is in
\begin{equation}\label{2013-04-14 23:19}
\bparen{x + \lambda(\overline{x})}\lambda(y) \overline{y}^{k-2}%
\left[\lambda(\overline{y}) \bparen{\overline{y} + \lambda(\overline{y})} + \overline{y}\bparen{y + \lambda(y)}\right].
\end{equation}
\end{lemma}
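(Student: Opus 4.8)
The plan is to characterize the minimal covers of $\gamma(k,x,y,y)$ and $\gamma(k,x,y,\overline y)$ for $k \geq 2$ by reducing, via the prefix‑compatibility analysis of Lemma~\ref{2013-03-28 19:40}, to the situation already settled in Lemma~\ref{2013-04-08 16:49}. Both claims follow the same template as the Fibonacci argument in Lemma~\ref{2013-04-14 01:49}: I first show that a minimal cover must actually be a \emph{prefix} of $\pi$ of some canonical word, then use the injectivity‑of‑prefixes statement (Lemma~\ref{2013-03-28 19:40}) to pin down the cover up to its first and last one or two letters, and finally observe that the allowed modifications of those boundary letters are exactly the ones displayed in~\eqref{2013-04-14 21:45} and~\eqref{2013-04-14 23:19}.

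In more detail, for $\gamma(k,x,y,y)$: suppose $u$ is a minimal cover. By minimality, $\gamma(k,x,y,y) \not\subset \mu\bparen{\pi(\talf u)}$, so writing $u = au'$ we must have $\gamma(k,x,y,y) \leq_p \pi(bu')$ for some $b \in \kvarta$; inspecting the first two letters of $\gamma(k,x,y,y)$ (which are $x\overline x$, since $\mu^k(y)$ begins $y\overline y$) forces $c(a)$ and $c(b)$ to agree on a prefix, and a short case analysis using the list in the proof of Lemma~\ref{2013-03-28 19:40} shows $\nsset{a,b}\subseteq\nsset{x,\lambda(\overline x)}$ works, i.e.\ $\gamma(k,x,y,y)\leq_p\pi(u)$ with first letter in $x + \lambda(\overline x)$. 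By~\eqref{2013-04-08 15:43} we then have $\pi\bparen{x\lambda(y)\overline y^{k-2}\lambda(\overline y)} <_p \pi(u)$ (after possibly adjusting the leading symbol), so Lemma~\ref{2013-03-28 19:40} gives $x\lambda(y)\overline y^{k-2}\lambda(\overline y) <_p u^{\flat}$ or one of the two exceptional forms; one checks using the recursion $\mu\bparen{\gamma(k,x,y,z)} = x\gamma(k+1,\overline x,y,z)\overline z$ and the structure of $\mu^k$ that $\pi\bparen{x\lambda(y)\overline y^{k-2}\lambda(\overline y)\,\overline y}$ is \emph{not} prefix‑compatible with $\gamma(k,x,y,y)$, which together with minimality of $u$ forces the last letter after $\lambda(\overline y)$ to lie in $y + \lambda(y)$. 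Conversely, every word in~\eqref{2013-04-14 21:45} is obtained from the manifestly minimal word $x\lambda(y)\overline y^{k-2}\lambda(\overline y)$ of Lemma~\ref{2013-04-08 16:49} by permitted boundary changes, hence is a minimal cover.

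For $\gamma(k,x,y,\overline y)$ the argument is the same but the final letter now falls into a four‑way split: after the common block $\bparen{x+\lambda(\overline x)}\lambda(y)\overline y^{k-2}$ the cover continues either with $\lambda(\overline y)$ followed by a letter in $\overline y + \lambda(\overline y)$ (the ``$z$'' ending, as in the previous case but with $\overline y$ in place of $y$ at the very end because $\gamma(k,x,y,\overline y)$ ends in $\overline y$ rather than $y$), or with $\overline y$ directly followed by a letter in $y + \lambda(y)$; this second branch is precisely~\eqref{2013-04-14 23:17} of Lemma~\ref{2013-04-08 16:49}, which already tells us $x\lambda(y)\overline y^{k-1}w$ is a minimal cover exactly when $w\in\nsset{y,\lambda(y)}$. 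Combining the two branches and prepending the allowed first letter gives~\eqref{2013-04-14 23:19}; again the converse direction is immediate from minimality of the base words supplied by Lemma~\ref{2013-04-08 16:49}.

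I expect the main obstacle to be the bookkeeping in the ``either a prefix or an exceptional form'' step of Lemma~\ref{2013-03-28 19:40}: one has to verify, by a direct but slightly delicate computation with $\mu^k$ and the morphism $\lambda$, that the exceptional continuations (forms (ii) and (iii) of that lemma) are incompatible with $\gamma(k,x,y,y)$ respectively $\gamma(k,x,y,\overline y)$ — except exactly for the boundary choices listed — so that minimality of $u$ really does confine it to the stated regular sets. The equalities $\mu^k(y\overline y y)z = \pi\bparen{x\lambda(y)\overline y^{k-2}\lambda(\overline y)}z$ and the factorization of $x\mu^k(y\overline y y)$ established in Lemma~\ref{2013-04-08 16:49} do most of the heavy lifting here; everything else is routine case‑checking on the two‑letter prefixes and suffixes, which I would leave to the reader in the same spirit as Lemmas~\ref{2013-04-14 01:49} and~\ref{2013-04-09 22:13}.
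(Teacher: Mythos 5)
Your overall route is the paper's: the displayed words are shown to be minimal covers via the explicit expansions of Lemma~\ref{2013-04-08 16:49}, and the converse is obtained by normalizing the first letter to lie in $x+\lambda(\overline{x})$, invoking Lemma~\ref{2013-03-28 19:40} to pin down the middle block, and then classifying the tail. Two points need repair, though. First, a small factual slip: $\gamma(k,x,y,z)=x\mu^{k}(y\overline{y}y)z$ begins with $xy\overline{y}$, not $x\overline{x}$; what actually restricts the first letter is that the occurrence of $\gamma$ cannot lie inside $\mu\bparen{\pi(\talf u)}$, so it starts at the \emph{last} letter of $c(u_{0})$, forcing $c(u_{0})$ to end in $x$ and hence $u_{0}\in\nsset{x,\lambda(\overline{x})}$. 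Second, and more substantively, you misquote Lemma~\ref{2013-03-28 19:40}: from $\pi\bparen{x\lambda(y)\overline{y}^{k-2}\lambda(\overline{y})}<_{p}\pi(u)$ its conclusion is $x\lambda(y)\overline{y}^{k-2}<_{p}u^{\flat}$ (the $\flat$ is applied to \emph{both} sides), not $x\lambda(y)\overline{y}^{k-2}\lambda(\overline{y})<_{p}u^{\flat}$. Consequently $u=x\lambda(y)\overline{y}^{k-2}w$ with $\nabs{w}=2$ by minimality, and the \emph{penultimate} letter of $u$ is not yet determined: a priori it could be $\overline{y}$ as well as $\lambda(\overline{y})$, since both have $c$-images beginning with $\overline{y}$. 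The paper closes this by cancelling the common prefix via Lemma~\ref{2013-04-10 17:21} to obtain $\pi\bparen{\lambda(\overline{y})}z<_{p}\pi(w)$ and enumerating: for $z=y$ the choice $w_{1}=\overline{y}$ is impossible (it would require $yy\leq_{p}\mu\bparen{c(w_{2})}$, which never holds), which is why~\eqref{2013-04-14 21:45} has a single branch, whereas for $z=\overline{y}$ both choices survive, giving the two branches of~\eqref{2013-04-14 23:19}. Your treatment of the second family already contains this bifurcation, so the fix is routine, but as written the first half of your argument skips the step that rules out $w_{1}=\overline{y}$.
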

\begin{proof}
We see from~\eqref{2013-04-08 15:43} that the words in~\eqref{2013-04-14 21:45} really are minimal covers of
$\gamma(k, x, y, y)$. (Notice here that $c(x)$ is a suffix of $c(\lambda(\overline{x}))$
and $c(y)$ is a prefix of $c(\lambda(y))$.) Further, we see from~\eqref{2013-04-08 15:43} 
and~\eqref{2013-04-14 23:17} that the words in~\eqref{2013-04-14 23:19} 
really are minimal covers of $\gamma(k, x, y, \overline{y})$.

Let us show the converse. Let $u \in \kvarta^{*}$ be a minimal cover of $\gamma(k,x,y,z)$ with $z \in \nsset{y, \overline{y}}$.
Then there exists $a,b \in \nsset{x, \lambda(\overline{x})}$ and $t\in \kvarta^{*}$ such that $u = at$ and $\gamma(k,x,y,z) \leq_{p} \pi(bt)$. Since both possibilities for $a$ are accounted for in~\eqref{2013-04-14 21:45}
and~\eqref{2013-04-14 23:19},
we may assume that $a = b$, and so $u = bt$. Then \eqref{2013-04-08 15:43} gives
\begin{equation}\label{2013-04-08 22:52}
\pi\bparen{x\lambda(y) \overline{y}^{k-2}\lambda(\overline{y})} =
\gamma(k,x,y,z) z^{-1} <_{p} \pi(u).
\end{equation}
Now Lemma~\ref{2013-03-28 19:40} applies, and since its options (ii) and (iii) are clearly not possible here, we get
\[
x\lambda(y) \overline{y}^{k-2} <_{p} u^{\flat},
\]
and so we have $u = x\lambda(y) \overline{y}^{k-2} w$ for some $w \in \kvarta^{+}$.
The minimality of~$u$ implies $\nabs{w} = 2$.
Equation~\eqref{2013-04-08 22:52} and Lemma~\ref{2013-04-10 17:21} then imply
\[
\mu^{k}\bparen{\pi(\lambda(\overline{y}))} z <_{p} \mu^{k}(\pi(w)),
\qqtext{or equivalently,}
\pi(\lambda(\overline{y}))  z <_{p} \pi(w).
\]
If $z = y$, then $w \in \lambda(\overline{y})\bparen{y + \lambda(y)}$.
If $z = \overline{y}$, then either 
\[
w \in \overline{y}\nparen{y + \lambda(y)} \qqtext{or}
w \in \lambda(\overline{y})\nparen{\overline{y} + \lambda(\overline{y})},
\]
and this completes the proof.
\end{proof}

\begin{theorem}\label{2013-04-14 18:14}
The language $\calL(\calS_{\mu})$ of the suffix conjugate $(\calS_{\mu}, H_{\mu})$ of the Thue-Morse subshift $(\calX_{\mu}, T)$ is regular.
An infinite word $\bfs \in S^{\N}$ is in $\calS_{\mu}$ if and only if it is the label of an infinite walk on
the graph depicted in Fig.~\ref{2013-04-08 20:11}.
The mapping $H_{\mu} \colon \calS_{\mu} \rightarrow \calS_{\mu}$ is given by 
\[
H_{\mu}(\bfs) =
\begin{cases}
\i \bfz & \text{if $\bfs = \ka\bfz$;}\\
\o \bfz & \text{if $\bfs = \ko\bfz$;}\\
\lambda(\bfz) & \text{if $\bfs = a\bfz$ with $a\in \bina$ and $\bfz\in \bina^{\N}$;}\\
\lambda(x\ka)\bfz & \text{if $\bfs = ax\ka\bfz$ with $a\in\bina$, $x\in \bina^{*}$;}\\
\lambda(x\ko)\bfz & \text{if $\bfs = ax\ko\bfz$ with $a\in\bina$, $x\in \bina^{*}$,}
\end{cases}
\]
where  $\lambda$ is the morphism given by $\lambda(\o) = \ka$,  $\lambda(\i) = \ko$, $\lambda(\ka) = \ka \i$,
and $\lambda(\ko) = \ko \o$.

\end{theorem}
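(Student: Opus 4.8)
The plan is to follow the template of the proof of Theorem~\ref{2013-04-05 16:26}. The first step is to observe that the set $\calC_{\mu}$ of minimal covers of the minimal forbidden words of $\bft$ is a regular language. Indeed, Lemma~\ref{2013-04-09 22:13} accounts for the finitely many minimal covers attached to $xxx$, $\gamma(0,x,\overline{x},x)$, $\gamma(1,x,x,\overline{x})$ and $\gamma(1,x,\overline{x},\overline{x})$, while Lemma~\ref{2013-04-09 16:05} shows that, for each $x,y\in\bina$, the minimal covers of $\gamma(k,x,y,y)$ over all $k\geq2$ form the language $\bparen{x+\lambda(\overline{x})}\lambda(y)\,\overline{y}^{*}\,\lambda(\overline{y})\bparen{y+\lambda(y)}$ and those of $\gamma(k,x,y,\overline{y})$ form $\bparen{x+\lambda(\overline{x})}\lambda(y)\,\overline{y}^{*}\bbrack{\lambda(\overline{y})\bparen{\overline{y}+\lambda(\overline{y})}+\overline{y}\bparen{y+\lambda(y)}}$, after substituting the explicit values $\lambda(\o)=\ka$, $\lambda(\i)=\ko$. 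A finite union of such expressions over $\kvarta$ is regular, so $\calC_{\mu}$ is regular.

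The second step is to invoke Theorem~\ref{2013-04-14 11:13}. The Thue-Morse morphism $\mu$ lies in $\calN$: it is primitive with an aperiodic fixed point, hence circular by Moss\'e's theorem; $\mu(A)=\nsset{\o\i,\i\o}$ is a suffix code, since $\o\i$ and $\i\o$ are distinct words of equal length; and both letters are growing. As $\calC_{\mu}$ is regular, Theorem~\ref{2013-04-14 11:13} yields that $\calL(\calS_{\mu})$ is regular and that $(\calS_{\mu},T)$ is sofic. To produce the explicit graph of Fig.~\ref{2013-04-08 20:11}, I would follow the construction in the proof of that theorem: build the minimal DFA $M_{0}$ recognizing $S^{*}\setminus S^{*}\calC_{\mu}S^{*}$, then delete all states from which no arbitrarily long walk reaches an accepting state, together with their incident edges, and, exactly as in the Fibonacci case, discard or merge states whose right-extension behaviour is subsumed by that of another state. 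The result is the displayed graph, and by the argument of Theorem~\ref{2013-04-14 11:13} an infinite word lies in $\calS_{\mu}$ if and only if it labels an infinite walk on it.

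The third step is to read off $H_{\mu}$. Here $S_{1}=\nsset{\o,\i}$, $S\setminus S_{1}=\nsset{\ka,\ko}$, and the extended morphism $\lambda$ of~\eqref{2013-04-13 20:46} and~\eqref{2013-04-14 16:13} takes the stated values $\lambda(\o)=\ka$, $\lambda(\i)=\ko$, $\lambda(\ka)=\ka\i$, $\lambda(\ko)=\ko\o$. Substituting these into formula~\eqref{2013-04-13 21:25} for $G$ and using $H_{\mu}=T\circ G$ gives the five cases directly: for instance, for $\bfs=\ka\bfz$ one has $G(\bfs)=\lambda(\ka)\bfz=\ka\i\bfz$, so $H_{\mu}(\bfs)=\i\bfz$; for $\bfs=a\bfz$ with $a\in\bina$ and $\bfz\in\bina^{\N}$ one has $\bfs\in S_{1}^{\N}$, so $G(\bfs)=\lambda(a)\lambda(\bfz)$ and, since $\lambda(a)$ is a single letter of $S$, $H_{\mu}(\bfs)=\lambda(\bfz)$; the remaining cases $\bfs=\ko\bfz$, $\bfs=ax\ka\bfz$ and $\bfs=ax\ko\bfz$ are handled the same way. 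These checks are routine and parallel those for $H_{\varphi}$.

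The step I expect to be the main obstacle is the automaton computation of the second step. Compared with the Fibonacci subshift, the alphabet $\kvarta$ has four letters and the family of minimal covers is correspondingly richer, so assembling $S^{*}\setminus S^{*}\calC_{\mu}S^{*}$, minimizing it, and pruning down to the graph of Fig.~\ref{2013-04-08 20:11} is the delicate and computation-heavy part; I would carry it out with automata software, as was done for the Fibonacci case. Once the graph is in hand, soundness (every surviving infinite walk labels an element of $\calS_{\mu}$) and completeness (every element of $\calS_{\mu}$ labels such a walk) are already guaranteed by the proof of Theorem~\ref{2013-04-14 11:13}, so no further argument is needed.
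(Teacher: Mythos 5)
Your proposal is correct and follows essentially the same route as the paper: establish that $\calC_{\mu}$ is regular via Lemmas~\ref{2013-04-09 22:13} and~\ref{2013-04-09 16:05}, apply Theorem~\ref{2013-04-14 11:13} and prune the resulting automaton to obtain the graph of Fig.~\ref{2013-04-08 20:11}, and read off $H_{\mu}$ from $H = T\circ G$ and Eq.~\eqref{2013-04-13 21:25}. You are somewhat more explicit than the paper in spelling out why $\mu\in\calN$, why the union over $k\geq2$ yields regular expressions with $\overline{y}^{*}$, and how each case of $H_{\mu}$ falls out of~\eqref{2013-04-13 21:25}, but the substance is identical, including deferring the automaton computation to software.
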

\begin{proof}
The set $\calC_{\mu}$ is obtained from~\eqref{2013-04-09 22:16} using Lemmas~\ref{2013-04-09 22:13} and~\ref{2013-04-09 16:05}. From these it is clear that $\calC_{\mu}$ is a regular language.
Thus $\calL(\calS_{\mu})$ is regular by Theorem~\ref{2013-04-14 11:13}.
Taking the steps outlined in the proof of that theorem and removing the superfluous states and edges,
as in the proof of Theorem~\ref{2013-04-05 16:26}, 
we get the graph depicted in Fig.~\ref{2013-04-08 20:11}.
Finally, the values of $H_{\mu}$ are obtained directly from the definition $H = T\circ G$ and~\eqref{2013-04-13 21:25}.
\end{proof}

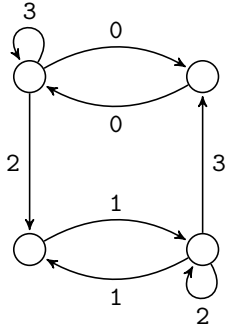
\begin{figure}
\begin{center}
\begin{tikzpicture}[->, >=stealth', shorten >=1pt, auto, node distance=2.3cm,%                                             
semithick, initial/.style={initial by arrow, initial text=}]
\tikzstyle{every state}=[semithick]
%\node[state, inner sep=1pt, minimum size=12pt] (q1) {}; %{$q_{1}$};
\node[state, inner sep=1pt, minimum size=12pt] (q3) [right of = q1] {}; % {$q_{3}$};
\node[state, inner sep=1pt, minimum size=12pt] (q5) [below of = q3] {}; %{$q_{5}$};
\node[state, inner sep=1pt, minimum size=12pt] (q4) [right of = q3] {}; %{$q_{4}$};
\node[state, inner sep=1pt, minimum size=12pt] (q6) [right of = q5] {}; %{$q_{6}$};
%\node[state, inner sep=1pt, minimum size=12pt] (q2) [right of = q6] {}; %{$q_{2}$};
\path 
%      (q1) edge [out=150, in=210, loop] node[left] {$\o$} (q1)
%      (q1) edge [above] node {$\ko$} (q3)
      (q3) edge [bend left, above] node {$\o$} (q4)
	  (q3) edge [out = 60, in = 120, loop] node[above] {$\ko$} (q3)      
      (q4) edge [bend left, below] node {$\o$} (q3)
      (q5) edge [bend left, above] node {$\i$} (q6)
      (q6) edge [bend left, below] node {$\i$} (q5)
	  (q3) edge [left] node {$\ka$} (q5)
	  (q6) edge [right] node {$\ko$} (q4)
%	  (q2) edge [below] node {$\ka$} (q6)
	  (q6) edge [out = -60, in = -120, loop] node[below] {$\ka$} (q6);
%      (q2) edge [out = 30, in = -30, loop] node[right] {$\i$} (q2);
\end{tikzpicture} 
\end{center}
\caption{The suffix conjugate of the Thue-Morse subshift.}
\label{2013-04-08 20:11}
\end{figure}

Using the characterization of $H_{\mu}$ from Theorem~\ref{2013-04-14 18:14}, it is readily verified that
\[
\bft = \mu^{\omega}(\o)  = \pi(\ka \i^{\omega}), \qquad T \bft  = \pi(\i^{\omega}),  \qquad T^2 \bft = \pi(\ko^{\omega}).  
\]

Our last goal for this section is to establish Theorem~\ref{2013-07-17 16:03},
for which we need the lemma.

\begin{lemma}\label{2013-07-17 17:13}
If $\bfx$ has two $T$-preimages in $\calX_{\mu}$, then either $\bfx = \bft$ or $\bfx = \overline{\bft}$.
\end{lemma}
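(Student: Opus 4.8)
The plan is to mimic the proof of Lemma~\ref{2013-07-18 18:28} for the Fibonacci case: if $\bfx$ has two $T$-preimages in $\calX_\mu$, then both $\o\bfx \in \calX_\mu$ and $\i\bfx \in \calX_\mu$, which forces every prefix of $\bfx$ to be a left special factor of the Thue--Morse word $\bft$. So the heart of the matter is the following combinatorial fact: the only two infinite words in $\calX_\mu$ all of whose prefixes are left special are $\bft$ and $\overline{\bft}$. I would either cite this directly from a standard reference on the Thue--Morse word (the left special factors of $\bft$ are well understood; see, e.g., \cite{Lothaire2002} or the literature on the factor complexity and special factors of $\bft$), or prove it by hand as sketched below.

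For a self-contained argument, first recall that $\bft$ has exactly two left special factors of each length: since the factor complexity of $\bft$ satisfies $p(n+1) - p(n) \in \{2, 4\}$ and the bilateral structure of $\bft$ is classically known, one checks that for every $n$ there are precisely two words $u$ of length $n$ with $\o u, \i u \in \calL(\calX_\mu)$, and these two words are $\overline{u}$ of each other. (This mirror symmetry is forced because the Thue--Morse word is closed under the exchange $\overline{\cdot}$.) Hence the left special factors of length $n$ form a single pair $\{u_n, \overline{u_n}\}$. Now if $\bfx$ has two $T$-preimages, its length-$n$ prefix $x_n$ is left special, so $x_n \in \{u_n, \overline{u_n}\}$ for every $n$. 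Since $\{x_n\}$ is a nested chain of prefixes, a short argument (using that $u_{n+1}$ restricts to $u_n$ or $\overline{u_n}$, and similarly on the barred side, together with the uniqueness of the pair) shows that either $x_n = u_n$ for all $n$ or $x_n = \overline{u_n}$ for all $n$ — once you commit to the ``unbarred branch'' at some length, consistency of prefixes pins it down. Taking limits, $\bfx$ is one of the two limit words, which are exactly $\bft$ and $\overline{\bft}$.

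Alternatively, and perhaps more in the spirit of the preceding sections, one can argue recognizability-style: if $\o\bfx, \i\bfx \in \calX_\mu$, apply Lemma~\ref{2013-04-10 20:38} to peel off one level of $\mu$. Both $\o\bfx$ and $\i\bfx$ factor as $s\,\mu(\bfy)$ with $s$ a nonempty suffix of $\mu(a)$ for some $a$; by the suffix-code and circularity properties of $\mu$ (it is in $\calN$), comparing these decompositions forces $\bfx$ itself to have two $T$-preimages after stripping, i.e.\ $\bfx = \mu(\bfx')\,$ up to a bounded prefix where $\bfx'$ again has two $T$-preimages, and then iterate. Since $\mu$ doubles lengths, this ``descent'' stabilizes only at fixed points of $\mu$ (up to shift), namely $\bft$ and $\overline{\bft}$; carefully tracking the bounded prefixes that get introduced at each stage and showing they are eventually empty gives the claim. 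I expect the main obstacle to be exactly this bookkeeping of the bounded prefixes in the descent (equivalently, verifying the "only two left special branches" claim rigorously rather than hand-wavily); everything else is routine. Given that Lemma~\ref{2013-07-18 18:28} was dispatched in three lines by citing \cite{Lothaire2002}, I would lead with the left-special-factor argument and a citation, and relegate the descent argument to a remark.
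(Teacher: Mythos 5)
Your lead argument is essentially the paper's, with one substitution: the paper reduces to the classification of \emph{bispecial} factors of $\bft$ rather than left special ones. It observes that aperiodicity of $\bfx$ forces infinitely many prefixes of $\bfx$ to be right special, hence (combined with the left speciality coming from the two $T$-preimages) bispecial, and then cites the known list of bispecial factors of $\bft$, namely $\epsilon$, $\o$, $\i$, $\mu^m(\o\i)$, $\mu^m(\i\o)$, $\mu^m(\o\i\o)$, $\mu^m(\i\o\i)$, among which the only infinite nested chains of prefixes are those of $\bft$ and $\overline{\bft}$. Your plan to instead cite the statement that the only infinite left special branches of $\bft$ are $\bft$ and $\overline{\bft}$ is an equally legitimate route to the same conclusion.

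However, your self-contained justification of that fact has a genuine error. It is not true that $\bft$ has exactly two left special factors of each length: on a binary alphabet the number of left special factors of length $n$ equals $p(n+1)-p(n)$, and for the Thue--Morse word this difference equals $4$ for infinitely many $n$. Already at $n=3$ all four of $\o\i\o$, $\i\o\i$, $\o\i\i$, $\i\o\o$ are left special (only $\i\i\o$ and $\o\o\i$ are not, as extending them on the left by the "wrong" letter creates $\i\i\i$ or $\o\o\o$). So the left special factors of a given length do not in general form a single complementary pair, and the ``commit to one branch'' argument as you state it collapses. The statement you actually need --- that only two of the left special factors of each length extend to infinite left special branches, the remaining ones being dead ends --- is precisely the nontrivial content of the classification you would otherwise be citing; it does not follow from the complexity count together with closure under $\overline{\cdot}$. (A repair exists: $p(n+1)-p(n)=2$ for infinitely many $n$, and for those $n$ the two left special factors are the length-$n$ prefixes of $\bft$ and $\overline{\bft}$, so a pigeonhole on nested prefixes of $\bfx$ finishes; but identifying those lengths and showing that all prefixes of $\bft$ and $\overline{\bft}$ are left special again amounts to invoking the classification.) Your recognizability-style descent is plausible in outline, but as you concede the bookkeeping of the bounded prefixes is the entire difficulty, and the paper does not take that route.
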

\begin{proof}
Since $\bfx$ is aperiodic, it must have infinitely many prefixes $p$ such that both $p\o$ and $p\i$ occur in $\bfx$.
Since $\o p$ and $\i p$ occur in $\bfx$ as well,  the words $p$ are so-called bispecial factors of $\bft$,
whose form is known~\cite[Prop.~4.10.5]{CasNic2010}. They are 
\[
\epsilon, \quad \o, \quad \i, \quad  \mu^m(\o\i), \quad \mu^m(\i\o), \quad\mu^m(\o\i\o), \quad \mu^m(\i\o\i)
\]
for $m\geq 0$.  Therefore either $\bfx = \mu^{\omega}(\o) =\bft$ or $\bfx = \mu^{\omega}(\i) = \overline{\bft}$.
\end{proof}

We say that an infinite word $\bfx$ is in the \emph{positive orbit} of $\bfz$ if $T^{k}\bfz  = \bfx$ for some $k \geq 0$
and that $\bfx$ is the \emph{negative orbit} of $\bfz$ if $T^{k}\bfx = \bfz$ for some $k \leq 0$.
Notice that in the previous section we used the notions strictly positive and strictly negative orbits.
The next result is analogous to Theorem~\ref{2013-04-06 13:09}.

\begin{theorem}\label{2013-07-17 16:03}
Let $\bfs \in  \calS_{\mu}$. Then 
\begin{enumerate}[(i)]
\item $\pi(\bfs)$ is in the positive orbit of $T^2 \bft$ if and only if $\ko^{\omega}$ is a tail of $\bfs$. \label{itemT1}
\item $\pi(\bfs)$ is in the negative orbit of $T \bft$ if and only if $\i^{\omega}$ is a tail of $\bfs$. \label{itemT2}
\end{enumerate}
\end{theorem}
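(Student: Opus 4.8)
The plan is to mimic the proof of Theorem~\ref{2013-04-06 13:09} very closely, since both the structure of $\calS_\mu$ and the formula for $H_\mu$ are the Thue--Morse analogues of the Fibonacci data, and Lemma~\ref{2013-07-17 17:13} plays the role of Lemma~\ref{2013-07-18 18:28}. The key inputs I will use throughout are: $\pi$ is a bijection (Theorem~\ref{2013-04-12 22:44}), the conjugacy relation $\pi\circ H_\mu = T\circ\pi$, the identities $\bft = \pi(\ka\i^\omega)$, $T\bft = \pi(\i^\omega)$, $T^2\bft = \pi(\ko^\omega)$, and the crucial structural fact read off from Theorem~\ref{2013-04-14 18:14} that $H_\mu$ \emph{preserves} the tails $\ka^\omega$, $\ko^\omega$, $\i^\omega$, $\o^\omega$ in the sense that a word has a given such tail iff its $H_\mu$-image does (here one must track the cross-relations coming from $\lambda(\o)=\ka$, $\lambda(\i)=\ko$, and from the first three clauses of the $H_\mu$ formula, exactly as in the Fibonacci case where $\o^\omega$ and $\ka^\omega$ are linked).

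For the ``only if'' directions of both~\eqref{itemT1} and~\eqref{itemT2}, I will argue algebraically. If $\pi(\bfs)$ is in the positive orbit of $T^2\bft$, then $\pi(\bfs) = T^{k+2}\bft = T^{k}\pi(\ko^\omega) = \pi\circ H_\mu^{k}(\ko^\omega)$ for some $k\geq0$; injectivity of $\pi$ gives $\bfs = H_\mu^{k}(\ko^\omega)$, and since $\ko^\omega$ has tail $\ko^\omega$ and $H_\mu$ preserves this tail, so does $\bfs$. For~\eqref{itemT2}, if $\pi(\bfs)$ is in the negative orbit of $T\bft$ then $T^{k}\pi(\bfs) = T\bft = \pi(\i^\omega)$ for some $k\geq0$, giving $\pi(\i^\omega) = T^{k}\pi(\bfs) = \pi\circ H_\mu^{k}(\bfs)$, hence $H_\mu^{k}(\bfs) = \i^\omega$, and running tail-preservation backwards shows $\bfs$ has tail $\i^\omega$.

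For the ``if'' directions I will use the ``divergence of suffixes'' trick from Theorem~\ref{2013-04-06 13:09}. Suppose $\bfs$ has tail $\ko^\omega$. Since $T^2\bft = \pi(\ko^\omega)$, for a suitable $m\geq0$ both $\pi(\bfs)$ and $T^2\bft$ share the common tail $\mu^{m}\bparen{\pi(\ko^\omega)}$; hence $\pi(\bfs) = u\bfz$ and $T^2\bft = v\bfz$ with $\bfz\in\bina^{\N}$ and the only common suffix of $u,v$ empty. Then split into cases: if $u\neq\epsilon$ and $v\neq\epsilon$ then $\o\bfz,\i\bfz\in\calX_\mu$, so $\bfz\in\{\bft,\overline{\bft}\}$ by Lemma~\ref{2013-07-17 17:13}, contradicting aperiodicity since $\bfz$ would be a tail of $T^2\bft$; if $u\neq\epsilon$, $v=\epsilon$ then $T^{\nabs{u}-1}\pi(\bfs)$ and $T^2\bft = T\pi(\i^\omega)$ are both $T$-preimages of $T^2\bft$, so again by Lemma~\ref{2013-07-17 17:13} they are equal (one checks the other $T$-preimage of $T^2\bft$, namely the one whose first letter differs, is a tail of $\overline{\bft}$ or $\bft$ and so cannot coincide — actually here the clean statement is $T^{\nabs u-1}\pi(\bfs)=T\bft=\pi(\i^\omega)$), forcing $H_\mu^{\nabs u-1}(\bfs) = \i^\omega$, which contradicts $\bfs$ having tail $\ko^\omega$ since $H_\mu$ preserves tails; and if $u=\epsilon$ then $\pi(\bfs)$ is in the positive orbit of $T^2\bft$, as desired. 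Part~\eqref{itemT2} is handled the same way with $T\bft=\pi(\i^\omega)$ in place of $T^2\bft$.

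The main obstacle I anticipate is bookkeeping in the ``if'' direction when exactly one of $u,v$ is empty: one must correctly identify \emph{which} shifted word is a $T$-preimage of what, and then invoke Lemma~\ref{2013-07-17 17:13} to collapse the two preimages — the subtlety being that here $\calX_\mu$ contains the two distinct fixed points $\bft$ and $\overline{\bft}$, so unlike the Fibonacci case one must make sure the alternative $\bfz = \overline{\bft}$ is also ruled out (it is, again by aperiodicity, since $\overline{\bft}$ is not a tail of $T^k\bft$ either). A secondary point requiring care is verifying the tail-preservation property of $H_\mu$ from the five-case formula in Theorem~\ref{2013-04-14 18:14}, in particular that the two ``swap'' clauses $\ka\bfz\mapsto\i\bfz$ and $\ko\bfz\mapsto\o\bfz$ together with $\lambda(\o)=\ka$, $\lambda(\i)=\ko$ interlock so that ``has tail $\ko^\omega$'' is genuinely $H_\mu$-invariant, and likewise for $\i^\omega$ (whose preimages under $H_\mu$ involve the $\lambda(\bfz)$ clause applied to $\o^\omega$). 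Both of these are routine but must be stated explicitly before the case analysis is run.
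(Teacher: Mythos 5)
Your plan is correct and follows essentially the same route as the paper's own proof: the ``only if'' directions via injectivity of $\pi$ and the conjugacy $\pi\circ H_\mu = T\circ\pi$ applied to $\pi(\ko^{\omega}) = T^2\bft$ and $\pi(\i^{\omega}) = T\bft$, and the ``if'' directions via the common-tail decomposition $\pi(\bfs)=u\bfz$, $T^2\bft = v\bfz$ with the same three-case analysis resolved by Lemma~\ref{2013-07-17 17:13} (the paper likewise only writes out part~(i), leaving (ii) to the reader). The subtleties you flag --- ruling out both $\bft$ and $\overline{\bft}$ by aperiodicity, and checking that $H_\mu$ preserves the relevant tails --- are exactly the points the paper handles, so no gap.
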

\begin{proof}
This proof is nearly identical to the proof of Theorem~\ref{2013-04-06 13:09};
therefore  we will only prove \eqref{itemT1} and leave the proof of \eqref{itemT2} to the reader.
If $\pi(\bfs)$ is in the positive orbit of $T^2 \bft  = \pi(\ko^{\omega})$, then there exists $k\geq 0$ such that
\[
\pi(\bfs) = T^{k + 2} \bft  = T^k \circ \pi(\ko^{\omega}) = \pi \circ H_{\mu}^k(\ko^{\omega}),
\]
so that $\bfs = H_{\mu}^k(\ko^{\omega})$ by the injectivity of $\pi$. Now the characterization of $H_{\mu}$ given in Theorem~\ref{2013-04-14 18:14} shows 
that $\bfs$ must have tail $\ko^{\omega}$.

Conversely, suppose that $\ko^{\omega}$ is a tail of $\bfs$.   Since $T^2 \bft  = \pi(\ko^{\omega})$, this implies that $\pi(\bfs)$ and $T^2 \bft$ have a common tail
 of the form $\mu^m\bparen{\pi(\ko^{\omega})}$ for some $m\geq 0$.
Therefore there exist finite words $u,v$ and $\bfz \in \{\o,\i\}^{\N}$ such that
 $\pi(\bfs) = u \bfz$ and $T^2 \bft  = v\bfz$ and the longest common suffix of $u$ and $v$ is the empty word $\epsilon$.

\emph{Case 1:} $u \neq \epsilon$ and $v \neq \epsilon$. Then  $\o \bfz, \i \bfz \in \calX_{\mu}$, so that $\bfz = \bft$ or $\bfz = \overline{\bft}$ by 
Lemma~\ref{2013-07-17 17:13}. But this is a contradiction because neither $\bft$ nor $\overline{\bft}$ can be a tail of $T^2\bft$.

\emph{Case 2: } $u \neq \epsilon$ and $v = \epsilon$. 
Now both $T^{\nabs{u} - 1} \circ \pi(\bfs)$ and $T \bft $ are $T$-preimages of $T^2  \bft$,
so that $T^{\nabs{u} - 1} \circ \pi(\bfs) = T \bft  $ by Lemma~\ref{2013-07-17 17:13}.
Furthermore,
\[
\pi(\i^{\omega}) = T \bft  = T^{\nabs{u} - 1} \circ \pi(\bfs) = \pi \circ H_{\mu}^{\nabs{u} - 1} (\bfs),
\]
so the the injectivity of $\pi$ implies $\i^{\omega} = H_{\mu}^{\nabs{u} - 1} (\bfs)$. But this is impossible because $\bfs$ has a tail $\ko^{\omega}$, which is
preserved by $H_{\mu}$  according to Theorem~\ref{2013-04-14 18:14}.

\emph{Case 3:} $u = \epsilon$. Then $\pi(\bfs)$ is a tail of $T^2 \bft $, and this is what we wanted to prove.
\end{proof}

%\section{Future work}
%
%Let $f \in \calN$ with an iterative fixed point that generates a subshift $(\calX, T)$.
%\begin{itemize}
%\item Prove that the language $\calL(\calS)$ is regular; i.e. $(\calS, T)$ is a sofic subshift.
%\item Prove the analogous version of Theorem~\ref{2013-04-06 13:09} for the Thue-Morse subshift.
%\item Formulate and prove an analogous version of Theorem~\ref{2013-04-06 13:09} for the general subshift $(\calX, T)$.
%\item Prove or disprove that a word $\pi(\bfs)$ with $\bfs \in \calS$ is morphic (i.e. the image of an iterative fixed point of a morphism under a letter-to-letter coding) if and only if $\bfs$ is ultimately periodic. Holton and Zamboni~\cite{HolZam2001}
%succeeded in proving an analogous result in their encoding scheme.
%\end{itemize}

\end{document}